\documentclass{amsart}
\usepackage{amsmath, amsfonts, amssymb,amsthm}
\usepackage{amstext}
\usepackage{mathrsfs}
\usepackage[dvipsnames]{xcolor}
\usepackage{tikz}
\usepackage[all]{xy}
\usepackage{enumerate}
\usetikzlibrary{er,positioning}

\setlength{\parskip}{.1 in plus 2pt minus 2pt}
\setlength{\textheight}{7.8 in}

\usepackage{lineno}
\usepackage{comment}

\theoremstyle{definition}
\newtheorem{definition}{Definition}[section]

\theoremstyle{remark}

\theoremstyle{plain}
\newtheorem{theorem}[definition]{Theorem}
\newtheorem{lemma}[definition]{Lemma}
\newtheorem{proposition}[definition]{Proposition}
\newtheorem{corollary}[definition]{Corollary}

\newcommand{\sq}[1]{\ifx#1([\else\ifx#1)]%
  \else\message{invalid use of "sq"}\fi\fi}

\newcommand{\PP}{\mathbb{P}}

\newcommand{\leqexc}{\leq_{\operatorname{exc}}}

\DeclareMathOperator{\ord}{ord}

\DeclareMathOperator{\Supp}{supp}

\DeclareMathSymbol{\idot}{\mathbin}{operators}{`\.}
\allowdisplaybreaks
\hfuzz50pc
\vfuzz50pc
\sloppy

\begin{document}
\title[Nevanlinna pair and algebraic hyperbolicity]{Nevanlinna pair and algebraic hyperbolicity}
\author{Yan He}
\address{Department of Mathematics\newline
\indent University of Houston\newline
\indent Houston,  TX 77204, U.S.A.} 
\email{yanhe@math.uh.edu}
\author{Min Ru}
\address{Department of Mathematics\newline
\indent University of Houston\newline
\indent Houston,  TX 77204, U.S.A.} 
\email{minru@math.uh.edu}{\bf }

\begin{abstract} We introduce the notion of the {\it
Nevanlinna pair} for a pair $(X, D)$, where $X$ is a projective variety and $D$ is an effective Cartier divisor on $X$.
 This notion links  and unifies the Nevanlinna theory, the complex hyperbolicity (Brody and Kobayashi hyperbolicity), 
the big Picard type extension theorem (more generally the Borel hyperbolicity), as well as the algebraic hyperbolicity. The key is to use the Nevanlinna theory on parabolic Riemann surfaces recently developed by
P${\rm{\breve{a}}}$un and Sibony \cite{paun2014value}.
\newline
\begin{center}
{\it Dedicated to Professor Shiing-Shen Chern on his 110th birth anniversary} 
\end{center}
\end{abstract}
 \thanks{2010\ {\it Mathematics Subject Classification.}
32H30,  32Q45, 32A22, 53C60.}  
\keywords{Nevanlinna theory, logarithmic derivative lemma, Brody hyperbolic, algebraically  hyperbolic,  big Picard theorem}
\thanks{The second named author is supported in part by Simon Foundations grant award \#531604.}

\baselineskip=16truept \maketitle \pagestyle{myheadings}
\markboth{}{Nevanlinna pair and algebraic hyperbolicity}
\section{Introduction}
The classical big Picard theorem says that every holomorphic map
from the punctured unit disk $\bigtriangleup^*$ into ${\mathbb P}^1({\mathbb C})$
 whose image omits three points can be extended to a
holomorphic map  from $\bigtriangleup$ into ${\mathbb P}^1({\mathbb C})$. On the other hand, it is well known that $\mathbb{P}^1(\mathbb{C})$ minus three points is hyperbolic.
This suggests  that the big-Picard-type results are strongly related to the complex hyperbolicity. In fact,
a theorem of Kwack and Kobayashi states that every holomorphic map $f:  \bigtriangleup^*
\rightarrow U$ extends to a holomorphic map $f:  \bigtriangleup
\rightarrow \overline{U}$  if $U$  is a quasi-projective variety and is hyperbolically embedded in some compactification $\overline{U}$ (cf. \cite{kobayashi2013hyperbolic}, Theorem 6.3.7 or \cite{lang1987introduction}, II $\S 2$), a notion invented by Kobayashi expressly
for this purpose.

 The big-Picard-type extension problem is important and is often  linked to the study of the algebraicity of holomorphic maps into a fixed variety. It recently attracted our interest because of the work  by Javanpeykar-Kucharczyk \cite{Javanpeykar2020} on the algebraicity of analytic
 maps. According to \cite{Javanpeykar2020}, let $X$ and $Y$ be two  finite
 type schemes over ${\mathbb C}$, their associated analytic spaces are denoted by $X^{an}$, $Y^{an}$. Let $\phi: X^{an} \rightarrow Y^{an}$
 be a holomorphic map. The 
 map  $\phi$  is said to be {\it algebraic} if there is a
 morphism of ${\mathbb C}$-schemes $f: X\rightarrow Y$ such that $f^{an}
  = \phi$. To ``algebraize" (in the language of \cite{Javanpeykar2020}),  one 
 usually needs the extension property. 
 For this reason, in  \cite{Javanpeykar2020}, they introduced a new notion of hyperbolicity: a quasi-projective variety $U$ is said to be {\it Borel hyperbolic} if any holomorphic map from a quasi-projective variety $V$ to $U$ is necessarily algebraic.
 They proved that if the big Picard property for a 
 quasi-projective variety $U$ holds, then $U$ is Borel hyperbolic. We refer the reader to  \cite{Javanpeykar2020},  $\S1$,  for their
 motivation on the Borel hyperbolicity.  Subsequently, Deng-Lu-Sun-Zuo \cite{deng2020picard}, and Deng \cite{deng2020bbig} also obtained results about the big Picard property.
  For the same  reason, the big-Picard-type results
are also  related to the algebraic hyperbolicity, a notion introduced 
 by J. P. Demailly \cite{Dem} and generalized to the logarithmic pairs by Xi Chen \cite{chen}. It is known, due to the result of Pacienza and Rousseau \cite{pacienza2007logarithmic}, 
 if $X\setminus D$ is hyperbolically imbedded in $X$ then $(X, D)$ is algebraically  hyperbolic.
 
 The original proof of Kwack and Kobayashi's extension theorem is rather conceptual and involved. It seems to us that the
Nevanlinna theory (in particular, the application of the logarithmic derivative lemma) is a more natural approach to attack the Picard extension problems, as shown by M. Green \cite{Green1975}. Recently, Y.T. Siu \cite{siu2015hyperbolicity} revived this method. 
To study the  $\bigtriangleup^*$-extension property, we make a  change of  variable $z:=1/\zeta$.
Then the problem is reduced to the existence of essential singularity at $\infty$. It is known that,  
for a holomorphic map $\phi: {\mathbb C}- \overline{\bigtriangleup(r_0)}\rightarrow {\mathbb P}^n({\mathbb C})$, 
if $T_{\phi}(r, r_0)\leqexc O(\log r)$ (see below for the notation) then $\phi$  can be extended to a holomorphic
map from ${\mathbb C}\cup\{\infty\}-\overline{\bigtriangleup(r_0)}$ into ${\mathbb P}^n({\mathbb C})$.  For this reason, we 
introduce the concept of {\it quantitatively big Picard} (see Definition \ref{quantitativep} below), and prove the following result. 
 \begin{theorem}[See Theorem \ref{qp2}] If $X\setminus D$ is hyperbolically imbedded in $X$, then  $X\setminus D$ is quantitatively big Picard.
 \end{theorem}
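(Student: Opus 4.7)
The plan is to bound the pullback of a Hermitian metric on $X$ by the hyperbolic metric on $\Omega := \mathbb{C} \setminus \overline{\Delta(r_0)}$ via the Kobayashi distance-decreasing principle, and then integrate to estimate the characteristic function. No extension theorem is needed; the argument proceeds directly from the infinitesimal definition of hyperbolically imbedded.

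First I record the hyperbolic metric on $\Omega$. The biholomorphism $w = r_0/z$ identifies $\Omega$ with $\Delta^* = \{0 < |w| < 1\}$, and pulling back the Poincar\'e metric of $\Delta^*$ yields
\[
\omega_\Omega \;=\; \frac{i}{2}\cdot\frac{dz \wedge d\bar z}{|z|^2(\log(|z|/r_0))^2}.
\]
Since $X \setminus D$ is hyperbolically imbedded in $X$, Kobayashi's infinitesimal criterion produces a Hermitian form $\omega_X$ on $X$ and a constant $C > 0$ such that $g^*\omega_X \leq C\omega_\Delta$ for every holomorphic $g: \Delta \to X \setminus D$, where $\omega_\Delta$ is the Poincar\'e form. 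Because $\omega_\Delta$ descends to $\omega_\Omega$ along the universal covering $\Delta \to \Omega$, applying this to the lift $\phi \circ p$ of $\phi$ yields the pointwise bound $\phi^*\omega_X \leq C\omega_\Omega$ on $\Omega$.

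Next I integrate. For any fixed $r_0' > r_0$,
\[
\int_{r_0' < |z| < t} \phi^*\omega_X \;\leq\; 2\pi C \int_{r_0'}^t \frac{ds}{s(\log(s/r_0))^2} \;=\; 2\pi C\!\left(\frac{1}{\log(r_0'/r_0)} - \frac{1}{\log(t/r_0)}\right) \;\leq\; \frac{2\pi C}{\log(r_0'/r_0)},
\]
which is bounded independently of $t$. The leftover piece $\int_{r_0 < |z| \leq r_0'}\phi^*\omega_X$ is a finite constant, since $\phi^*\omega_X$ is smooth on a neighborhood of $\{|z| = r_0\}$ (we may arrange this by shrinking the base radius slightly into the domain of $\phi$). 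Combining,
\[
\int_{r_0 < |z| < t}\phi^*\omega_X = O(1),\qquad T_\phi(r, r_0) = \int_{r_0}^r \frac{dt}{t}\int_{r_0 < |z| < t}\phi^*\omega_X = O(\log r).
\]
Because $\omega_X$ is comparable to $\omega_{\mathrm{FS}}|_X$ under any projective embedding $X \hookrightarrow \mathbb{P}^n$, this delivers the quantitatively big Picard property for $X \setminus D$.

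The main obstacle is the behavior of $\omega_\Omega$ at the inner boundary $|z| = r_0$: the density $(\log(|z|/r_0))^{-2}|z|^{-2}$ has a non-integrable singularity there, so Ahlfors-Schwarz by itself does not yield an integrable upper bound on all of $\Omega$. The resolution is precisely the split above: the strong decay of $(\log(s/r_0))^{-2}/s$ for $s \gg r_0$ makes the outer-annulus integral uniformly bounded in $t$, while the inner sliver contributes only a harmless finite constant because $\phi$ is regular near $\{|z|=r_0\}$. Once the $O(1)$ control on $\int_{r_0 < |z| < t}\phi^*\omega_X$ is in hand, the $O(\log r)$ bound for $T_\phi$ is a direct calculation.
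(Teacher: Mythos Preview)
Your argument is correct and follows essentially the same route as the paper: compare a Hermitian form on $X$ with the Kobayashi--Royden metric via the hyperbolic-imbedding hypothesis, use distance-decreasing to bound $f^*\omega$ by the hyperbolic metric on the domain, and integrate. The only cosmetic difference is that you realize the hyperbolic metric on $\Omega=\mathbb{C}\setminus\overline{\Delta(r_0)}$ by pulling back from $\Delta^*$ via $w=r_0/z$, whereas the paper simply writes it down for $r_0=1$; and your worry about the inner-boundary singularity is unnecessary, since by definition $T_{\phi,\eta}(r,r_1)$ is based at a fixed $r_1>r_0$, so the integration never approaches $|z|=r_0$ and the split you perform is exactly the choice of such an $r_1$.
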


 More generally, we introduce the concept of {\it Nevanlinna pair} through holomorphic maps on an open  parabolic Riemann surface. 
  We briefly recall some notations (For details, see Section 2 below and \cite{paun2014value}).  A
  non-compact Riemann surface $Y$ is {\it parabolic} if it  admits a parabolic exhaustion
  function, i.e. a continuous proper function $\sigma: Y\rightarrow [0, \infty)$ such that $\log \sigma$ is harmonic outside a
  compact subset of $Y$.  However, in this paper, we restrict it to a special case for simplicity which is sufficient for our purpose, 
 i.e.,  by a {\it parabolic Riemann surface} we mean an open Riemann surface $Y$, together with a proper continuous function $\sigma:Y\to [0,\infty)$ (called a parabolic exhaustion function) such that
  \begin{itemize}
	\item $\log \sigma$ is harmonic outside possibly a finite set  $\Sigma:=\{P_1, \dots, P_k\}$ on $Y$.
	\item At each $P_i\in \Sigma$, in a coordinate chart $(U,z)$ centered at $P_i$ that does not contain other points in $\Sigma$, $\log \sigma(z)   =k_i \log |z| +h_{P_i}(z)$, where $h_{P_i}$ is a harmonic function on $U$.
  \end{itemize}
 Note this notion of parabolic Riemann surface, first appeared in \cite{Gasbarri}, is slightly stronger than the standard one given in \cite{paun2014value}.
We also note that all the inequalities we established in the paper still hold on a standard parabolic Riemann surface (in the sense of \cite{paun2014value}) with the term $2\varsigma\log r$ (see below) replaced by a constant multiple $O(\log r)$ of $\log r$ depending on $Y$.
Denote by $B(r)$ the parabolic disk $\{y\in Y:\sigma(y) <r\}$ and by $S(r)$ the parabolic circle $\{y\in Y:\sigma(y)=r\}$. 
By Sard's theorem,  $S(r)$ is smooth for almost all $r>0$, in that case we denote by $\mu_r$ the measure induced by the differential $d^c\log\sigma|_{S(r)}$ and write $d\mu_r = d^c\log\sigma|_{S(r)}$. 
Let 
\begin{equation}\label{eqn:defsigma}
\varsigma:=\int_{S(r)} d\mu_r,
\end{equation}
which is independent of $r$ for $r$ large enough because $\log \sigma$ is harmonic. Let 
 $\chi_{\sigma}(r)$ be the Euler characteristic of $B(r)$, and define 
 \begin{equation}\label{wEuler}
 \mathfrak{X}_{\sigma}(r):=\int_{1}^r \chi_{\sigma}(t)\frac{dt}{t}.
 \end{equation}

 Throughout the paper, we fix a nowhere vanishing global holomorphic vector field $\xi\in \Gamma(Y, T_Y)$  on $Y$. Such a vector field exists because $Y$ is non-compact and consequently its holomorphic tangent bundle $T_Y$ is holomorphically trivial (see Theorem 5.3.1, \cite{forstnerivc2011stein}). We also define 
 \begin{equation}\label{wEulernew}
 \mathfrak{E}_{\sigma}(r):=\int_{S(r)}\log^- |d\sigma(\xi)|^2 d\mu_r,
 \end{equation}
 where, for  a positive real number $x$, $\log^+ x= \max\{0, \log x\}$ and $\log ^-x=-  \min\{0, \log x\}$. 
Note that $ \mathfrak{E}_{\sigma}(r)$ is closely related to  $\mathfrak{X}_{\sigma}(r)$.  When $\mathfrak{X}_{\sigma}(r)=O(\log r)$, we also have $\mathfrak{E}_{\sigma}(r)=O(\log r)$. More precisely,  
 according to Lemma \ref{bnew}, we have
 $$\int_{S(r)} \log |d\sigma(\xi)|^2 d\mu_r = -\mathfrak{X}_{\sigma}(r)+ 2\varsigma\log r+O(1).$$

 	\begin{definition}\label{nevp}
	Let $X$ be a projective variety and $D$ be an effective Cartier divisor on $X$.  
	We say that $(X, D)$ is a {\it Nevanlinna pair} if there is a positive $(1, 1)$-form $\eta$ on $X$ such that for any parabolic Riemann surface $Y$ and every holomorphic map $f: Y\rightarrow X$ with $f(Y)\not\subset D$ and for $\delta>0$, one has  $$T_{f, \eta}(r)\leqexc \overline{N}_f(r, D) - \mathfrak{X}_{\sigma}(r) + (\delta+2\varsigma)\log r+ \mathfrak{E}_{\sigma}(r)+ O(1),$$ 
	where $O(1)$ is a constant which may depend on $f$ and $Y$, $\varsigma$ is the constant given by (\ref{eqn:defsigma}), and  $\leqexc$ means that the inequality holds for all $r\in (0, \infty)$ except a set of finite measure depending on $\delta$.	
	 	 \end{definition}

\noindent{\bf Remark}:  Note that the coefficient $(\delta+2\varsigma)$ appeared before $\log r$ is crucial in dealing with 
the algebraic hyperbolicity, hence one of the main focuses is to keep the constants appeared in the paper 
to be independent of $f$ and $Y$. For this reason, throughout the paper, when we mention a constant $C>0$, we always mean that $C$ is independent of $f$ and $Y$ unless otherwise specified. 
On the other hand, when we write $O(\log r)$ or $O(1)$, we mean that the involved constants may depend on $f$ and $Y$.

Note that the complex plane $\mathbb{C}$ together with exhaustion function $\sigma(z)=|z|$ is a parabolic Riemann surface with $\varsigma = \frac{1}{2}$. In this case, $\mathfrak{X}_\sigma(r) = \log r, \mathfrak{E}_\sigma(r) = O(1)$, so if   $(X,D)$ is a Nevanlinna pair then there exists a positive $(1,1)$-form $\eta$ on $X$ such that for every holomorphic map $f:\mathbb{C}\to X\setminus D$ and $\delta > 0$,
$$T_{f,\eta}(r)\leqexc\delta \log r+O(1).$$
This implies that $f$ is constant. Hence we have the following statement: {\it If $(X,D)$ is a Nevanlinna pair, then $X\setminus D$ is Brody hyperbolic}.

Using the recent result of Brotbek and Brunebarbe (\cite{BB} Theorem 6.2), we prove the following result. 
\begin{theorem}[See Theorem \ref{bb}]  \label{1.3}{If $X\setminus D$ is hyperbolically imbedded in $X$, then  $(X, D)$ is a Nevanlinna pair.}
\end{theorem}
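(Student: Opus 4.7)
My plan is to combine the metric-side consequence of hyperbolic imbedding furnished by Brotbek-Brunebarbe's Theorem 6.2 in \cite{BB} with the parabolic Nevanlinna calculus of P\u{a}un-Sibony. From the hypothesis that $X\setminus D$ is hyperbolically imbedded in $X$, Brotbek-Brunebarbe produce a positive $(1,1)$-form $\eta$ on $X$, a smooth Hermitian metric $h_D$ on $\mathcal{O}_X(D)$ with canonical section $s_D$ (scaled so that $|s_D|_{h_D}^2<1$), and a Poincar\'e-type singular Hermitian metric $h_P$ on $\mathcal{O}_X(D)$ whose Chern current dominates $\eta$: $c_1(\mathcal{O}_X(D),h_P)\geq \eta$ as positive currents on $X$. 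Concretely, $h_P$ is built from $h_D$ by a multiplicative $\log(-\log|s_D|_{h_D}^2)$-type correction so that the curvature acquires the requisite Poincar\'e positivity near $D$. This is where the hyperbolic imbedding hypothesis enters essentially---without it there is no mechanism for producing a negatively-curved potential adapted to $D$.

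Next I would pull back along $f:Y\to X$ with $f(Y)\not\subset D$ and apply the parabolic First Main Theorem of P\u{a}un-Sibony to $(\mathcal{O}_X(D),h_P)$. The current inequality yields $T_{f,\eta}(r)\leq T_{f,c_1(\mathcal{O}_X(D),h_P)}(r)+O(1)$, and the FMT converts the right-hand side to $m_{f,h_P}(r,D)+N_f(r,D)+O(1)$. The Poincar\'e singularity of $h_P$ along $D$ then plays two crucial roles: it converts the unreduced counting function $N_f(r,D)$ into the truncated one $\overline{N}_f(r,D)$ (the singular mass of $c_1(\mathcal{O}_X(D),h_P)$ along $D$ sees the support of the divisor but not the multiplicities), and it introduces an extra proximity contribution of the form $\int_{S(r)}\log(-\log|s_D\circ f|_{h_D}^2)\,d\mu_r$. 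Both of these, together with $m_{f,h_D}(r,D)$ itself, are estimated by the parabolic logarithmic derivative lemma of P\u{a}un-Sibony applied to $u:=-\log|s_D\circ f|_{h_D}^2$ and the fixed nowhere-vanishing vector field $\xi$, producing exactly the correction $-\mathfrak{X}_\sigma(r)+(\delta+2\varsigma)\log r+\mathfrak{E}_\sigma(r)+O(1)$ appearing in Definition~\ref{nevp}.

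The main obstacle is precision in the coefficient of $\log r$: the Remark following Definition~\ref{nevp} insists that $\delta+2\varsigma$ be sharp, with $\varsigma$ the only $Y$-dependent quantity and all other constants absorbed into the $O(1)$ term uniformly in $f$ and $Y$. This forces the parabolic LDL to be invoked in its sharpest form---the factor $2\varsigma$ arises from $\int_{S(r)}d\mu_r=\varsigma$ combined with the squared Poincar\'e weight---and requires careful bookkeeping so that the Brotbek-Brunebarbe constants enter only through $O(1)$, not through the coefficient of $\log r$ or of $\overline{N}_f$. As a sanity check, on $Y=\mathbb{C}$ with $\sigma(z)=|z|$ one has $\varsigma=1/2$, $\mathfrak{X}_\sigma(r)=\log r$, and $\mathfrak{E}_\sigma(r)=O(1)$, so the inequality specializes to $T_{f,\eta}(r)\leqexc \overline{N}_f(r,D)+\delta\log r+O(1)$, which is the expected classical bound for a holomorphic map from $\mathbb{C}$ into a hyperbolically imbedded target.
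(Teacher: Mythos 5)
Your proposal does not track the paper's argument, and the mechanism you propose for obtaining the \emph{truncated} counting function does not work.

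The paper's proof is carried out entirely on the curve side. Writing $\Sigma:=(f^*D)_{\mathrm{red}}$ and $Y^*:=Y\setminus\Sigma$, the hyperbolic imbedding hypothesis is used only through the Kobayashi--Royden comparison $cf^*\eta\leq\omega_{Y^*}$, exactly as in the proof of Theorem~\ref{qp2}; here $\omega_{Y^*}$ is the Kobayashi metric of $Y^*$, i.e.\ the Poincar\'e metric of constant curvature $-1$ descended from the universal cover. The truncation then arises from the step extending the Ricci current across $\Sigma$. Near a puncture $p\in\Sigma$ the Kobayashi metric has the standard singularity bound $a(z)\leq 1/\bigl(|z|^2\log^2(|z|^2\delta)\bigr)$ \emph{independently of the multiplicity $\nu(p)$ of $f^*D$ at $p$}; after Lemma~\ref{blemma} applied to $\psi:=a(z)|z|^2$, this yields
\[
[\operatorname{Ric}\omega_{Y^*}]\leq [\Sigma]+\operatorname{Ric}[\omega_{Y^*}],
\]
with coefficient exactly one at each puncture. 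That is where $\overline N_f(r,D)$ comes from. The rest (Green--Jensen, Lemma~\ref{calculus}, and absorbing the $\log T$ term) is then routine.

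Your proposed mechanism replaces this with a Poincar\'e-type singular Hermitian metric $h_P$ on $\mathcal O_X(D)$ over $X$, and asserts that the singularity of $c_1(\mathcal O_X(D),h_P)$ along $D$ produces $\overline N_f$ rather than $N_f$. This is the gap. A weight of the form $\log\bigl(-\log|s_D|^2_{h_D}\bigr)$ has zero Lelong number along $D$, so $c_1(\mathcal O_X(D),h_P)$ carries no additional Dirac mass along $D$ compared to $c_1(\mathcal O_X(D),h_D)$. Pulling back to $Y$, the function $\log\|s_D\circ f\|^2_{h_P}=\log|s_D\circ f|^2_{h_D}-2\log\bigl(-\log|s_D\circ f|^2_{h_D}\bigr)$ has, at each $p\in\Sigma$, a Dirac mass $-\nu(p)\delta_p$ from the first term and \emph{none} from the $\log\log$ term; the FMT with $h_P$ therefore still returns the full $N_f(r,D)$, not $\overline N_f(r,D)$. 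You are conflating the reducedness of $D$ on $X$ (trivially true since $D$ is a divisor) with the multiplicity structure of $f^*D$ on $Y$, where the truncation actually has to be achieved. The paper's argument works precisely because the Kobayashi metric of $Y^*$ only sees the punctures, not the ramification orders.

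A secondary issue: the hyperbolic imbedding hypothesis does not enter via a Brotbek--Brunebarbe construction of an $h_P$ on $\mathcal O_X(D)$ with $c_1\geq\eta$; such global positivity is a statement about the positivity of $[D]$, not about $X\setminus D$ being hyperbolically imbedded in $X$ (indeed, the theorem makes no ampleness-type hypothesis on $D$). What the paper extracts from $\cite{BB}$ is a Nevanlinna-type inequality formally similar to Theorem~\ref{bb}, together with the subharmonic-extension Lemma~\ref{blemma}, both used on the curve $Y^*$. Also, the Remark after Definition~\ref{nevp} does \emph{not} require the $O(1)$ term to be uniform in $f$ and $Y$; it explicitly allows $O(1)$ to depend on them, while the multiplicative constant $C$ (and the coefficient $\delta+2\varsigma$ of $\log r$) must be uniform.
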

 According to Demailly \cite{Dem} and Chen \cite{chen},  the pair $(X,D)$ is said to be  {\it algebraically hyperbolic}
if there exists a positive $(1,1)$-form $\omega$ on $X$ such that 
for any compact Riemann surface $R$ and every holomorphic map 
$f: R\rightarrow X$ with $f(R)\not\subset D$, the following inequality holds
$$\int_R f^*\omega \leq \bar{n}_f(D)+\max\{0, 2g-2\},$$
 where $\bar{n}_f(D)$ is the number of points of $f^{-1}(D)$ on $R$ and $g$ is the genus of $R$.
We  show that Nevanlinna pair implies algebraically hyperbolic.
\begin{theorem}[See Theorem \ref{alg}] If $(X, D)$ is a Nevanlinna pair, then 
	$(X, D)$ is algebraically hyperbolic.
\end{theorem}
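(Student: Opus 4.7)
\textit{Plan.} I would take $\omega:=\eta$, the positive $(1,1)$-form supplied by the Nevanlinna pair hypothesis, and apply that hypothesis to a family of parabolic Riemann surfaces built from $R$. Fix a compact Riemann surface $R$ of genus $g$ and a holomorphic $f\colon R\to X$ with $f(R)\not\subset D$, and write $A:=\int_R f^{*}\eta$; the goal is $A\leq \overline{n}_f(D)+\max\{0,2g-2\}$.

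For $g=0$ the argument is direct: choose $p_0\in R\setminus f^{-1}(D)$ and view $Y:=R\setminus\{p_0\}\cong\mathbb{C}$ as a parabolic Riemann surface with $\sigma(z)=|z|$, so that $\varsigma=\tfrac12$, $\mathfrak{X}_\sigma(r)=\log r$, and $\mathfrak{E}_\sigma(r)=O(1)$; the Nevanlinna pair inequality collapses to $T_{f,\eta}(r)\leqexc \overline{N}_f(r,D)+\delta\log r+O(1)$, and extracting the leading $\log r$-coefficients (using $T_{f,\eta}(r)/\log r\to A$ and $\overline{N}_f(r,D)/\log r\to \overline{n}_f(D)$) gives $A\leq \overline{n}_f(D)+\delta$ for every $\delta>0$, hence $A\leq \overline{n}_f(D)=\overline{n}_f(D)+\max\{0,2g-2\}$.

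For $g\geq 1$ the central idea is to pass to unramified covers of unbounded degree. For each integer $d\geq 1$ I would choose an unramified degree-$d$ cover $\pi_d\colon R_d\to R$ (from an index-$d$ sublattice for $g=1$; for $g\geq 2$ as the cover associated to the kernel of a surjection $\pi_1(R)\to\mathbb{Z}/d$), set $f_d:=f\circ\pi_d\colon R_d\to X$, and choose $p^{(1)}\in R_d\setminus f_d^{-1}(D)$. By Riemann--Hurwitz $g_d=1+d(g-1)$. I would equip $Y_d:=R_d\setminus\{p^{(1)}\}$ with the exhaustion $\sigma_d:=|h_d|^{1/k_d}$, where Riemann--Roch furnishes a meromorphic $h_d$ on $R_d$ with a single pole of order $k_d$ at $p^{(1)}$; a residue computation shows $\varsigma_d=\tfrac12$, uniformly in $d$. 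For the vector field, I would take $\xi_d$ to extend meromorphically to $R_d$ with its only singularity (a pole of order $2g_d-2$) at $p^{(1)}$, so that $|d\sigma_d(\xi_d)|^2$ grows polynomially on $S_d(r)$ and consequently $\mathfrak{E}_{\sigma_d}(r)=O(1)$.

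Applying the Nevanlinna pair inequality on $(Y_d,\sigma_d,\xi_d)$ and reading off the coefficient of $\log r$ on each side---using $T_{f_d,\eta}(r)/\log r\to dA$ by pullback, $\overline{N}_{f_d}(r,D)/\log r\to d\,\overline{n}_f(D)$ since $\pi_d$ is \'etale and $p^{(1)}\notin f_d^{-1}(D)$, and $\mathfrak{X}_{\sigma_d}(r)/\log r\to \chi(Y_d)=1-2g_d$---yields
\[
dA\;\leq\; d\,\overline{n}_f(D)+(2g_d-1)+(\delta+1)\;=\;d\,\overline{n}_f(D)+2g_d+\delta.
\]
Dividing by $d$ and substituting $2g_d/d=2(g-1)+2/d$ gives $A\leq \overline{n}_f(D)+2(g-1)+(2+\delta)/d$, and sending $d\to\infty$ delivers $A\leq \overline{n}_f(D)+2g-2=\overline{n}_f(D)+\max\{0,2g-2\}$. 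The main obstacle is the careful bookkeeping of parabolic data on $Y_d$: one must verify that $\varsigma_d$, $\mathfrak{E}_{\sigma_d}(r)/\log r$, and the unnamed $O(1)$ terms in the Nevanlinna pair inequality are controlled well enough in $d$ that they vanish in the limit after division by $d$. This is precisely why the choices of $h_d$ and $\xi_d$ above, both concentrated at the single puncture $p^{(1)}$, are essential.
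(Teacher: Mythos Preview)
Your approach is sound in outline but takes a genuinely different route from the paper's. The paper works directly on $Y=R\setminus\{Q\}$ for a single point $Q\notin f^{-1}(D)$: using Riemann--Roch it produces a meromorphic $\psi$ on $R$ with a sole pole at $Q$ of order $\leq g+1$, sets $\sigma=|\psi|$ (so $2\varsigma\leq g+1$), argues that $\mathfrak{E}_\sigma(r)=0$ for large $r$, and extracts from the Nevanlinna-pair inequality the cruder bound $\int_R f^*\eta\leq \bar n_f(D)+3g$. Since this is not sharp, the paper rescales to $\omega=\tfrac16\eta$ and then disposes of the residual cases $g\leq 1$, $\bar n_f(D)=0$ separately, invoking that a Nevanlinna pair forces $X\setminus D$ to be Brody hyperbolic so that $f$ must be constant there. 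Your covering trick is more refined: by passing to unramified degree-$d$ covers, normalising $\sigma_d=|h_d|^{1/k_d}$ so that $\varsigma_d=\tfrac12$ uniformly, and letting $d\to\infty$, you dilute the Euler-characteristic contribution and obtain the sharp inequality $A\leq \bar n_f(D)+\max\{0,2g-2\}$ with $\omega=\eta$, bypassing both the rescaling and the edge-case analysis.

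One technical claim needs more care than you give it. A holomorphic nowhere-vanishing vector field on $Y_d=R_d\setminus\{p^{(1)}\}$ that extends meromorphically to $R_d$ with its sole singularity a pole of order $2g_d-2$ at $p^{(1)}$ exists precisely when $(2g_d-2)\,p^{(1)}\sim K_{R_d}$; for $g_d\geq 2$ this cuts out at most finitely many points of $R_d$, with no guarantee that any of them avoids $f_d^{-1}(D)$. The paper's own justification that $\mathfrak{E}_\sigma(r)=0$ is equally terse on exactly this point, so this is a shared subtlety rather than a defect peculiar to your method; still, you should expect to either constrain the choice of $p^{(1)}$, remove a few additional points (at the cost of the sharp constant), or estimate $\mathfrak{E}_{\sigma_d}$ by another route. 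Finally, your worry about the $d$-dependence of the $O(1)$ term is unnecessary: since you send $r\to\infty$ for each fixed $d$ \emph{before} letting $d\to\infty$, the $O(1)$ (which may depend on $d$) already vanishes after division by $\log r$ and requires no uniformity in $d$.
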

We summarize our results in the following diagram:

\scalebox{.70}{
	\begin{tikzpicture}
	\node[entity](embedded){$X\setminus D$ is hyperbolically embedded};
	\node[entity](pair)[right=of embedded]{$(X,D)$ is a Nevanlinna Pair} edge[<-] (embedded);
	\node[attribute](borel)[above right=of pair]{$X\setminus D$ is Borel hyperbolic} edge[<-] (pair);
	\node[attribute](picard)[below right =of pair]{$X\setminus D$ is Picard hyperbolic} edge[<-](pair);
	\node[attribute](brody)[right =of pair]{$X\setminus D$ is Brody hyperbolic} edge[<-](pair);
	\node[attribute](algebraic)[below = of pair]{$(X,D)$ is algebraically hyperbolic} edge[<-](pair);
	\end{tikzpicture}}

\bigskip We  conjecture that if $(X, D)$ is a Nevanlinna pair, then $X\setminus D$ is Kobayahsi hyperbolic. Therefore, in our opinion,  {\it  Nevanlinna pair} is a suitable notion to unify the Nevanlinna theory, the complex hyperbolicity (Brody and Kobayashi hyperbolicity),  the big Picard type extension property (more generally the Borel hyperbolicity), as well as the algebraic hyperbolicity. 

When $X$ is ${\mathbb P}^n({\mathbb C})$ and $D$ consists of hyperplanes, we prove that 
 Nevanlinna pair,  Brody hyperbolicity and the big Picard type extension theorem (more generally the Borel hyperbolicity) are indeed equivalent. 
 \begin{theorem}[See Theorem \ref{hyper1}] 
 	 Let ${\mathcal H}$ be a finite set of hyperplanes in ${\mathbb P}^n({\mathbb C})$.  Let $|{\mathcal H}|:=\sum_{H\in \mathcal{H}}H$. 
	Then the following statements are equivalent.
	\begin{enumerate}[(a)]
		\item $({\mathbb P}^n(\mathbb{C}), |{\mathcal H}|)$ is a Nevanlinna pair.
		\item ${\mathbb P}^n({\mathbb C})\backslash |{\mathcal H}|$ is Brody hyperbolic.
		\item ${\mathbb P}^n({\mathbb C})\backslash |{\mathcal H}|$ is Picard hyperbolic.
	\end{enumerate}
\end{theorem}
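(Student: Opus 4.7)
The plan is to establish the cycle $(a) \Rightarrow (b)$, $(a) \Rightarrow (c)$, $(c) \Rightarrow (b)$, and $(b) \Rightarrow (a)$. Two of the implications are already recorded earlier in the paper: the remark after Definition \ref{nevp} gives $(a) \Rightarrow (b)$, and the implication $(a) \Rightarrow (c)$ follows from the general result that a Nevanlinna pair satisfies the quantitatively big Picard property (Theorem \ref{qp2} and the surrounding discussion, as reflected in the summary diagram). So what remains is to produce $(c) \Rightarrow (b)$ and $(b) \Rightarrow (a)$.

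For $(c) \Rightarrow (b)$ I would argue by contradiction. Suppose $f\colon \mathbb{C} \to \mathbb{P}^n(\mathbb{C}) \setminus |\mathcal{H}|$ is a non-constant entire map. Applying Picard hyperbolicity to $g(z) = f(1/z) \colon \bigtriangleup^* \to \mathbb{P}^n(\mathbb{C}) \setminus |\mathcal{H}|$ extends $f$ to a non-constant morphism $\tilde{f} \colon \mathbb{P}^1(\mathbb{C}) \to \mathbb{P}^n(\mathbb{C})$. Consider then the composition $h(z) = \tilde{f}(e^{1/z})$: since $e^{1/z}$ maps $\bigtriangleup^*$ into $\mathbb{C}^* \subset \mathbb{C}$ and $\tilde{f}(\mathbb{C}) \subset \mathbb{P}^n(\mathbb{C}) \setminus |\mathcal{H}|$, the map $h$ is a well-defined holomorphic map $\bigtriangleup^* \to \mathbb{P}^n(\mathbb{C}) \setminus |\mathcal{H}|$. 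A second application of Picard hyperbolicity extends $h$ to $\tilde{h} \colon \bigtriangleup \to \mathbb{P}^n(\mathbb{C})$. On the other hand, the big Picard theorem applied to $e^{1/z}$ produces, for every $w_0 \in \mathbb{C}^*$, a sequence $z_k \to 0$ with $e^{1/z_k} = w_0$, so $\tilde{h}(0) = \lim_k h(z_k) = \tilde{f}(w_0)$. Thus $\tilde{f}$ is constant on $\mathbb{C}^*$, hence on $\mathbb{P}^1(\mathbb{C})$, contradicting non-constancy.

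For $(b) \Rightarrow (a)$, the essential input is M. Green's classical theorem \cite{Green1975} on hyperplane arrangements: Brody hyperbolicity of $\mathbb{P}^n(\mathbb{C}) \setminus |\mathcal{H}|$ is equivalent to $\mathbb{P}^n(\mathbb{C}) \setminus |\mathcal{H}|$ being hyperbolically embedded in $\mathbb{P}^n(\mathbb{C})$. Invoking this, hypothesis $(b)$ upgrades to hyperbolic embedding, and Theorem \ref{1.3} then delivers at once that $(\mathbb{P}^n(\mathbb{C}), |\mathcal{H}|)$ is a Nevanlinna pair.

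The main obstacle I anticipate is the appeal to Green's equivalence of Brody hyperbolicity and hyperbolic embedding for hyperplane complements, which is the decisive reason the equivalence of $(a)$--$(c)$ holds in this particular setting (and which one would not expect to persist for a divisor $D$ of higher degree); all other steps are either established earlier in the paper or follow from short function-theoretic arguments.
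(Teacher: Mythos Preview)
Your proposal is correct but follows a genuinely different route from the paper for the key implication $(b)\Rightarrow(a)$.

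The paper's Section~4 proves $(a)\Leftrightarrow(b)$ as follows: it invokes \cite{ru95} to identify Brody hyperbolicity of $\mathbb{P}^n(\mathbb{C})\setminus|\mathcal H|$ with the explicit combinatorial \emph{non-degenerate} condition on the defining linear forms, then builds a parabolic Ahlfors--Weyl/Cartan Second Main Theorem (Theorem~\ref{cartan}) and exploits chains of minimal linear relations (Lemmas~\ref{lem:hyp-minimalrelation} and~\ref{lem:nondeg}) to derive an explicit Nevanlinna-pair inequality with concrete constants. Your approach instead uses Green's theorem to upgrade Brody hyperbolicity of the hyperplane complement to hyperbolic embedding, and then invokes Theorem~\ref{bb}. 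This is considerably shorter, but it trades the paper's self-contained, quantitative argument for two black boxes (Green's result and the Brotbek--Brunebarbe-type Theorem~\ref{bb}); the paper's route, by contrast, yields the precise inequality~(\ref{3.11}), not merely the abstract Nevanlinna-pair conclusion.

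Two minor points. First, your citation for $(a)\Rightarrow(c)$ is off: Theorem~\ref{qp2} asserts that \emph{hyperbolic embedding} implies quantitative big Picard, not that Nevanlinna pair does, and the paper does not give a direct proof of ``Nevanlinna pair $\Rightarrow$ Picard hyperbolic''. This is harmless, though, since your own route already passes through hyperbolic embedding en route to $(a)$, and Kwack--Kobayashi (or Theorem~\ref{qp2}) then gives $(c)$ from that intermediate step; you can simply drop the separate $(a)\Rightarrow(c)$ claim. Second, your argument for $(c)\Rightarrow(b)$ is correct but more elaborate than needed --- the paper records this implication as immediate (any non-constant entire curve into $X\setminus D$ would, after precomposing with $e^{1/z}$, violate the $\bigtriangleup^*$-extension property).
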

In this paper, we also provide examples  of the Nevanlinna pair $(X, D)$  where, in some cases,
$X\setminus D$ may not be hyperbolically imbedded in $X$. Indeed, in each case, we obtain a more precise Second Main Theorem 
type inequality for holomorphic maps on open parabolic Riemann surfaces.

\begin{theorem}[See Theorem \ref{thm1} and  Theorem \ref{alg}]  Let $X$ be a projective manifold of dimension $n\ge 2$ and let $A$ be
	a very ample line bundle over $X$. Let $D \in |A^m|$ be a general smooth hypersurface with
	$$m\ge (n + 2)^{n+3}(n + 1)^{n+3}.$$ Then there exists a constant $C>0$ such that  for every holomorphic map $f: Y\rightarrow X$ with $f(Y)\not\subset D$  where $Y$ is a parabolic Riemann surface, we have, for $\delta>0$, 
	$$T_{f, A}(r) \leqexc {\overline N}_f(r,D) + C(\log T_{f, A}(r) - \mathfrak{X}_{\sigma}(r)+  (\delta+2\varsigma)\log r+\mathfrak{E}_{\sigma}(r)) +O(1).$$ 
In particular $(X, D)$ is a Nevanlinna pair.   The pair $(X,D)$ is also algebraically hyperbolic.	 
\end{theorem}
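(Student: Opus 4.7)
\medskip

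\noindent\textbf{Proof plan.} The strategy is to follow the Siu--Brotbek jet-differential approach to the Kobayashi conjecture for projective hypersurfaces, but run on a parabolic Riemann surface $Y$ using the P\u{a}un--Sibony version of the logarithmic derivative lemma. The final Nevanlinna-pair and algebraic-hyperbolicity conclusions will then be essentially formal. First I would recall (or set up inside the paper) a family of Wronskian-type holomorphic jet differentials on $X$, of some order $k$ and some weight $m'$, twisted by $A^{-c}$ for a suitable $c>0$, whose common base locus is contained in $D$; this is exactly the content of the Brotbek construction and the explicit bound $m\ge (n+2)^{n+3}(n+1)^{n+3}$ is the one guaranteeing that such a symmetric differential with ample cotwist exists for a general $D\in|A^m|$. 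The key point is that these jet differentials, evaluated on an arbitrary $k$-jet of a holomorphic curve, produce sections of pulled-back line bundles whose zero loci contain $f^{-1}(D)$ with multiplicity at least some fixed positive number.

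Next I would pull these jet differentials back via $f:Y\to X$ and evaluate on the $k$-jet of $f$ determined by the fixed holomorphic vector field $\xi\in\Gamma(Y,T_Y)$. This produces, for each chosen jet differential $P_\alpha$, a meromorphic function on $Y$ whose divisor can be controlled from below by $\text{ord}(f^*D)-\text{(jet order correction)}$. Applying the First Main Theorem together with the P\u{a}un--Sibony logarithmic derivative lemma for parabolic Riemann surfaces (Theorem in \cite{paun2014value}, which produces exactly the error term $-\mathfrak{X}_\sigma(r)+2\varsigma\log r+\mathfrak{E}_\sigma(r)+O(\log^+ T_{f,A}(r))$ outside an exceptional set) and using the ampleness of $A^c$ to absorb the $-c\,T_{f,A}(r)$ term on the negative side, I would obtain, after dividing by the appropriate weight and taking a maximum over $\alpha$,
\begin{equation*}
T_{f,A}(r)\leqexc \overline N_f(r,D)+C\bigl(\log T_{f,A}(r)-\mathfrak X_\sigma(r)+(\delta+2\varsigma)\log r+\mathfrak E_\sigma(r)\bigr)+O(1),
\end{equation*}
with $C$ depending only on $X$, $A$, $D$ and the choice of jet differentials, \emph{not} on $f$ or $Y$. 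This is the main inequality; checking that every constant produced along the way can be made independent of $f$ and $Y$ is the step I expect to be the principal technical obstacle, because several of the classical estimates (for $m_f(r,\cdot)$ of jet quotients, and for the contribution of zeros of $df(\xi)$) are usually stated with constants allowed to depend on $f$. Here one must trace through the proof of the P\u{a}un--Sibony lemma and the Brotbek vanishing-order estimate to verify that the only $f$- and $Y$-dependent contribution lies in the $O(1)$.

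To deduce that $(X,D)$ is a Nevanlinna pair, I would choose $\eta$ to be the curvature form of a positively-curved metric on $A$ (so that $T_{f,\eta}(r)=T_{f,A}(r)+O(1)$) and then absorb the $C\log T_{f,A}(r)$ term into a $\delta\log r$ term via the standard Borel lemma / exceptional-set argument: either $T_{f,A}(r)=O(\log r)$ (and there is nothing to prove) or else $C\log T_{f,A}(r)\leq \delta\log r$ outside a set of finite measure, after replacing $\delta$ by $\delta/2$. Matching the resulting inequality against Definition \ref{nevp} gives the Nevanlinna-pair property. Finally, algebraic hyperbolicity of $(X,D)$ is obtained for free by invoking the previously-established implication (Theorem \ref{alg}) that every Nevanlinna pair is algebraically hyperbolic, which completes the proof.
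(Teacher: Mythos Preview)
Your proposal is correct and follows essentially the same approach as the paper: the paper packages your ``pull back jet differentials and apply the parabolic LDL'' step as Theorem~\ref{log}(b) (and its Corollary~\ref{bd1}), then invokes Brotbek--Deng's base-locus result (Proposition~\ref{bd2}) on the log Demailly tower $X_k(D)$ to supply the required jet differentials, with a limiting argument $\alpha\to\infty$ to force the coefficient of $\overline N_f(r,D)$ below $1$; the Nevanlinna-pair and algebraic-hyperbolicity conclusions then follow exactly as you outline, via rescaling $\eta$ and Theorem~\ref{alg}. One minor point: the relevant input is Brotbek--Deng \cite{brotbek2019kobayashi} rather than Siu--Brotbek, since you need the version for an arbitrary projective $X$ with very ample $A$, and the explicit numerics $(n+2)^{n+3}(n+1)^{n+3}$ come from their Corollary~4.5.
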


\begin{theorem}[See Theorem \ref{abelian} and  Theorem \ref{alg}]\label{thm:intro-abelian} Let $A$ be an abelian variety and 
	$D$ be an ample divisor on $A$. Then there exists a constant $C>0$ such that    for every holomorphic map $f: Y\rightarrow A$ 
	with $f(Y)\not\subset D$ where $Y$ is a parabolic Riemann surface, we have, for
some $k_0>0$, 
	$$T_{f, D}(r)\leqexc {\overline N}^{[k_0]}_f(r, D)+C(\log T_{f, A}(r) - \mathfrak{X}_{\sigma}(r)+  (\delta+2\varsigma)\log r+ \mathfrak{E}_{\sigma}(r))+ O(1).$$
	In particular, 
	$(A, D)$ is 
	a Nevanlinna pair. The pair $(A,D)$ is also algebraically hyperbolic.
	\end{theorem}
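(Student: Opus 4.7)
The plan is to extend the second main theorem of Noguchi-Winkelmann-Yamanoi for entire holomorphic curves into abelian varieties to the parabolic Riemann surface setting, using the logarithmic derivative lemma of P\u{a}un-Sibony. Once the SMT inequality is established, the Nevanlinna pair property follows by rescaling $\eta$ and applying a Borel-type absorption, and the algebraic hyperbolicity claim reduces to Theorem \ref{alg}.

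\emph{Reduction to the Zariski-dense case.} If $f(Y)$ is not Zariski dense in $A$, then by Ueno's theorem its Zariski closure is a translate $a + A'$ of a proper abelian subvariety $A' \subset A$. The restriction $(D - a)|_{A'}$ is an effective divisor, ample on $A'$ because $D$ is ample on $A$, and replacing $(A, D, f)$ by $(A', (D-a)|_{A'}, f - a)$ allows induction on $\dim A$. So we may assume $f$ has Zariski dense image.

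\emph{Jet differentials and the parabolic SMT.} Following Siu-Yeung and Noguchi-Winkelmann-Yamanoi, construct for some $k_0 \geq 1$ a logarithmic jet differential $\omega$ on $A$ whose pullback via the $k_0$-jet of any Zariski dense holomorphic map into $A$ is not identically zero. The translation-invariance of $T_A$ gives a global frame $\theta_1, \dots, \theta_n$ of invariant $1$-forms, and $\omega$ is assembled as a polynomial in the $\theta_i$ and their derivatives multiplied by a section of $\mathcal{O}_A(mD)$; the Ax-Lindemann theorem applied to the universal cover $\mathbb{C}^n \to A$ ensures $\omega \circ j_{k_0} f \not\equiv 0$. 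Contracting $f^*\omega$ with the fixed global vector field $\xi \in \Gamma(Y, T_Y)$ produces a nontrivial meromorphic function on $Y$ whose poles, counted with multiplicity capped at $k_0$, are bounded by $\overline N^{[k_0]}_f(r, D)$. The P\u{a}un-Sibony logarithmic derivative lemma on $Y$ then yields
$$m_f(r, D) \leqexc C\log^+ T_{f, A}(r) + C\int_{S(r)}\log^+ |d\sigma(\xi)|^{-2}\, d\mu_r + C\delta\log r + O(1),$$
which, via the identity $\int_{S(r)}\log|d\sigma(\xi)|^2\, d\mu_r = -\mathfrak{X}_\sigma(r) + 2\varsigma \log r + O(1)$ recalled in the excerpt and the definition of $\mathfrak{E}_\sigma$, rearranges into the stated error package. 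Combining with the first main theorem $T_{f, D}(r) = m_f(r, D) + N_f(r, D) + O(1)$ yields the SMT.

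\emph{Nevanlinna pair and algebraic hyperbolicity.} To absorb the $C\log T_{f,A}(r)$ term, apply Borel's growth lemma: outside an exceptional set of finite Lebesgue measure one has $C\log^+ T_{f,A}(r) \leq \tfrac{1}{2}T_{f,D}(r)$, whence $T_{f,D}(r) \leqexc 2\overline N^{[k_0]}_f(r, D) + 2C(\text{remaining error})$. Using $\overline N^{[k_0]}_f \leq k_0 \overline N_f$ and taking $\eta$ to be a sufficiently small positive multiple of a K\"ahler form representing $c_1(D)$ produces the inequality required by Definition \ref{nevp}. Algebraic hyperbolicity then follows directly from Theorem \ref{alg}. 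The main obstacle is executing the parabolic logarithmic derivative lemma with the sharp $2\varsigma \log r$ coefficient and precise $\mathfrak{X}_\sigma, \mathfrak{E}_\sigma$ terms, which requires carefully tracking the non-canonical vector field $\xi$ through a Wronskian-type calculation exploiting the harmonicity of $\log \sigma$ outside the finite set $\Sigma$; a secondary difficulty is arranging the jet differential $\omega$ so that its pole order along $D$ is capped by $k_0$, which is what produces the truncated $\overline N^{[k_0]}$ rather than the full $N_f$ on the right-hand side.
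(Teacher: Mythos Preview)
Your reduction to the Zariski-dense case is flawed. Ueno's theorem does \emph{not} assert that the Zariski closure of $f(Y)$ is a translate of an abelian subvariety; it only gives a fibration of that closure over a variety of general type with abelian-variety fibers. For holomorphic maps from $\mathbb{C}$ the stronger conclusion follows from Bloch--Ochiai, but for an arbitrary parabolic Riemann surface $Y$ (for instance an affine algebraic curve of genus $\geq 2$) the image can be any irreducible subvariety of $A$, so your induction does not go through. The paper handles the non-dense case quite differently: if the closure $X$ happens to be a translate of an abelian subvariety one reduces as you suggest, but otherwise one quotients by the stabilizer of $X$, invokes Propositions~5.3 and~5.4 of P\u{a}un--Sibony to produce a jet differential with values in the dual of an ample bundle (using that the map $\tau_k\colon X_k\to\mathcal{R}_{n,k}$ eventually has finite generic fibers), and then applies Theorem~\ref{log}(b) with empty boundary divisor.

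In the Zariski-dense case your outline is in the right spirit but invokes the wrong machinery. Ax--Lindemann plays no role here. The paper's argument (after Siu--Yeung) is a direct descending-chain argument on the closed sets $V_k=p_k\bigl(J_k(D)\cap I_k^{-1}(I_k(j_kf(y_0)))\bigr)$ to find $k_0$ and a polynomial $P$ on the jet-coordinate space $\mathbb{C}^{nk_0}$ vanishing on $I_{k_0}(J_{k_0}(D))$ but not on $I_{k_0}(X_{k_0}(f))$; the truncation level $k_0$ in $\overline{N}^{[k_0]}_f$ then comes from writing $I_{k_0}^*P$ locally as $\sum_{j=0}^{k_0} a_{\lambda j}\, d^j\sigma_\lambda$ and comparing orders. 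Your sentence about poles ``capped at $k_0$'' gestures at this but does not supply the mechanism. The remaining steps (parabolic LDL with the explicit $\mathfrak{X}_\sigma,\mathfrak{E}_\sigma,2\varsigma\log r$ bookkeeping, FMT, absorption of $\log T$, and Theorem~\ref{alg}) are as in the paper.
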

 We note that recently Yamanoi \cite{yamanoi} proved that $A\setminus D$ is Kobayashi hyperbolic  under the conditions in Theorem \ref{thm:intro-abelian}.
 
\noindent{\bf Remark}. During the time we were working on the manuscript, we were not aware whether {\it $X\setminus D$ is 
 hyperbolically imbedded in $X$} implies that {\it
$(X, D)$ is a Nevanlinna pair}, so we put this as a conjecture in the original version. When the manuscript was nearly finished, we discovered the preprint \cite{BB} and found that Theorem 6.2 in \cite{BB} is 
exactly, thanks to Brotbek and Brunebarbe,  what we are looking for. Therefore, we decided to include their result as Theorem \ref{1.3} (as well as Theorem \ref{bb}) in the manuscript. 
\section{Quantitative big Picard and the Nevanlinna pair}

\subsection{Relating different analytic notions of hyperbolicity
(Picard hyperbolicity in \cite{deng2020bbig}  and Borel hyperbolicity in \cite{Javanpeykar2020})}

 We first gather known results relating
the different notions of hyperbolicity. We start with an extension property for holomorphic maps. 
Denote by $\bigtriangleup(r)$  the disk centered at the origin with radius $r$, and by $\bigtriangleup^*(r): =\bigtriangleup(r)- \{0\}$ the puncture disk.
We use $\bigtriangleup$ for the unit disk and $\bigtriangleup^*$ the unit punctured disk.

\begin{definition}[\cite{Javanpeykar2020}, \cite{deng2020bbig}]
A finite type separated scheme $X$ over ${\mathbb C}$ has the {\it $\bigtriangleup^*$-extension property} (see \cite{Javanpeykar2020}),  or is
	{\it	Picard hyperbolic} (see \cite{deng2020bbig}),
	 if there is an open
immersion $X \subset {\overline X}$ with ${\overline X}$ proper over ${\mathbb C}$ such that, for every morphism $f: \bigtriangleup^*
\rightarrow X^{an}$, there is a morphism 
$\bigtriangleup \rightarrow \overline{X}^{an}$  which extends $f$.
\end{definition}

The classical big Picard theorem can thus be stated as that ${\mathbb P}^1\backslash \{0, 1, \infty\}$ has the
$\bigtriangleup^*$-extension property, or ${\mathbb P}^1\backslash \{0, 1, \infty\}$ is Picard hyperbolic.

Let $X$ be a complex analytic space. Recall that $X$ is called {\it Brody hyperbolic} provided that every holomorphic map $f:\mathbb{C}\to X$ is  constant, and $X$ is said to be {\it Kobayashi hyperbolic} if the Kobayashi pseudo-distance 
$d_X$ is a distance.  It is clear that if $X\setminus D$ is Picard hyperbolic then $X\setminus D$ is Brody hyperbolic.
Conversely,  according to Kwack and Kobayashi,
if $U$  is a quasi-projective variety  and is hyperbolically imbedded in some compactification $\overline{U}$, then $U$  has the
$\bigtriangleup^*$-extension property, i.e. $U$  is Picard hyperbolic.

It is important to indicate that if the $\bigtriangleup^*$-extension property holds, then one
can also obtain the higher-dimensional extension property. 
 This indeed is a consequence of the deep extension theorem of meromorphic maps due to Siu  \cite{siu75}.  The following is the precise statement.

\begin{proposition}[See \cite{deng2020bbig},  Proposition 3.4] 
Let $Y^0$ be a Zariski open set of a compact K\"ahler manifold $Y$. Assume
that $Y^0$ is Picard hyperbolic. Then any holomorphic map $f : \bigtriangleup^p\times 
(\bigtriangleup^*)^q \rightarrow Y^0$
extends to
a meromorphic map $f: \bigtriangleup^{p+q} \rightarrow Y$.   In particular, any holomorphic map $g$ from a Zariski
open set $X^0$ of a compact complex manifold $X^0$ to $Y^0$
extends to  a meromorphic map from $X$ to $Y$.
\end{proposition}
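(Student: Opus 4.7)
The plan is to reduce the higher-dimensional extension problem to the one-variable $\bigtriangleup^*$-extension provided by Picard hyperbolicity, and then invoke Siu's extension theorem \cite{siu75} for meromorphic maps into compact K\"ahler manifolds to bridge codimension $\geq 2$ obstructions. Work on $U := \bigtriangleup^{p+q}$ and let $H_i := \{z_i = 0\}$ for $i = p+1,\dots,p+q$. The first step is to extend $f$ meromorphically across the generic part $H_i^{*} := H_i \setminus \bigcup_{j\neq i} H_j$ of each boundary divisor: near any point of $H_i^{*}$, there is a polydisk $U' \cong \bigtriangleup^{p+q-1}\times\bigtriangleup^{*}$ on which $f$ is defined, and restricting $f$ to each slice parallel to the $z_i$-axis gives a holomorphic map $\bigtriangleup^{*}\to Y^0$ that extends to $\bigtriangleup \to Y$ by Picard hyperbolicity. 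Taking the closure in $U'\times Y$ of the graph of $f|_{U'}$, one uses the K\"ahlerness of $Y$ (through a volume/Bishop-type argument) to see that this closure is analytic and projects properly onto $U'$, and hence defines a meromorphic extension of $f$ across $H_i^{*}$.

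Iterating over all $i$ produces a meromorphic extension of $f$ to $U\setminus S$, where $S := \bigcup_{i<j}(H_i\cap H_j)$ is analytic of codimension $\geq 2$ in $U$. Siu's extension theorem for meromorphic maps into a compact K\"ahler target then supplies the meromorphic extension across $S$, yielding the desired map $\bigtriangleup^{p+q}\dashrightarrow Y$. For the ``in particular'' statement, Hironaka's resolution of singularities allows one to replace $X$ by a modification on which $X\setminus X^0$ is a simple normal crossings divisor; every point of $X$ then has a coordinate neighborhood in which $X^0$ has the model form $\bigtriangleup^p\times(\bigtriangleup^{*})^q$. Applying the first part gives local meromorphic extensions which, by uniqueness of meromorphic continuation, glue to a global meromorphic map $X\dashrightarrow Y$.

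The main obstacle is the codimension-one extension across each $H_i^{*}$: one must upgrade the slice-by-slice extensions (which are a priori only a family of pointwise extensions) to a genuine meromorphic extension of the full map. This requires establishing that the graph closure is analytic with proper projection onto the base, which is precisely where the K\"ahler hypothesis on $Y$ enters, via a volume estimate of Bishop--Siu type that controls the analytic cycles approached in the limit and prevents the closure from having uncontrolled vertical components along $H_i^{*}$. Once meromorphy across each divisor $H_i^{*}$ has been secured, bridging the codimension-two locus $S$ by Siu's theorem is comparatively routine.
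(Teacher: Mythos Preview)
The paper does not supply its own proof of this proposition: it is quoted verbatim from \cite{deng2020bbig} (Proposition 3.4), and the only justification given in the text is the sentence preceding it, namely that ``this indeed is a consequence of the deep extension theorem of meromorphic maps due to Siu \cite{siu75}.'' Your proposal is a correct and standard fleshing-out of exactly that indication: reduce to the codimension-one case via slice-wise application of the $\bigtriangleup^*$-extension property, control the graph closure using the K\"ahler hypothesis (Bishop--Siu volume bound), and then invoke Siu's theorem \cite{siu75} to cross the remaining codimension-$\geq 2$ locus; the Hironaka reduction for the final assertion is likewise the expected step. So your approach agrees with what the paper points to, only with the details supplied.
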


\subsection{Quantitative big Picard and Nevanlinna pair.}

To study the  $\bigtriangleup^*$-extension property, we make a  change of variable $z:=1/\zeta$.
Then the problem is reduced to the statement that there is no essential singularity at $\infty$.
  In general,  corresponding to the big Picard Theorem, one studies the 
 extendability across $\infty$ of a holomorphic map  ${\mathbb C}- \overline{\bigtriangleup(r_0)}\rightarrow X$ to 
  ${\mathbb C}\cup\{\infty\}- \overline{\bigtriangleup(r_0)}\rightarrow X$ for some fixed $r_0\geq 1$, where $X$ is a projective variety.
    In this paper, we use Nevanlinna theory to deal with this problem. 
  Let $\phi: {\mathbb C}- \overline{\bigtriangleup(r_0)}\rightarrow X$ be a holomorphic map.
  Let $\eta$ be a positive  $(1, 1)$-form on $X$. We define, for any fixed $r_1>r_0$,  
$$T_{\phi, \eta}(r, r_1)=\int_{r_1}^r \left(\int_{r_1<|t|< r} f^*\eta\right) \frac{dt}{t}.$$
Sometimes we just write it as $T_{\phi, \eta}(r)$ when $r_1$ is fixed. 
For the $\phi: {\mathbb C}- \overline{\bigtriangleup(r_0)}\rightarrow {\mathbb P}^n({\mathbb C})$, we write, for $r_1 > r_0$,  
 $T_{\phi}(r, r_1): = T_{\phi, \eta}(r, r_1)$ when $\eta$ is taken as the Fubini-Study form of ${\mathbb P}^n({\mathbb C})$.

   The starting point is the following lemma. 
  \begin{lemma}[See \cite{siu2015hyperbolicity}, Proposition 6.2]\label{siu}  
		Let $\phi: {\mathbb C}- \overline{\bigtriangleup(r_0)}\rightarrow {\mathbb P}^n({\mathbb C})$ be a holomorphic map. 
If $T_{\phi}(r, r_1)\leqexc O(\log r)$, then $\phi$  can be extended to a holomorphic
map from ${\mathbb C}\cup\{\infty\}-\overline{\bigtriangleup(r_0)}$ to ${\mathbb P}^n({\mathbb C})$. 
\end{lemma}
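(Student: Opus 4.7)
The plan is to reduce the lemma to the classical fact that a meromorphic function on a punctured disk whose Nevanlinna characteristic grows at most logarithmically near the puncture extends meromorphically across it.

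First, I would choose a reduced global representation $\phi = [\phi_0:\cdots:\phi_n]$ in which each $\phi_i$ is holomorphic on $U := \mathbb{C} - \overline{\bigtriangleup(r_0)}$ and the $\phi_i$ share no common zero. This is legitimate because $U$ is a one-dimensional Stein manifold homotopy equivalent to $S^1$, so $H^1(U,\mathcal{O}) = H^2(U,\mathbb{Z}) = 0$, and the exponential exact sequence forces $\mathrm{Pic}(U) = 0$; thus $\phi^*\mathcal{O}_{\mathbb{P}^n}(1)$ is trivial on $U$ and its generating sections can be taken as honest holomorphic functions. Introduce the meromorphic ratios $f_i := \phi_i/\phi_0$ for $i = 1,\ldots, n$ and use a first-main-theorem-style comparison $T(r, f_i) \le T_\phi(r, r_1) + O(1)$ to translate the hypothesis into $T(r, f_i) \leqexc O(\log r)$ for each $i$.

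Next, I would change variables via $w = 1/\zeta$, so that $g_i(w) := f_i(1/w)$ becomes meromorphic on the punctured disk $\{0 < |w| < 1/r_0\}$ with a possible isolated singularity at $w = 0$. Because $T_\phi(r, r_1)$ is nondecreasing and convex in $\log r$, the bound $O(\log r)$ off a set of finite measure upgrades at once to the same bound for all large $r$, and in the $w$-variable this says that the classical Nevanlinna characteristic of $g_i$ at the puncture grows at most like $O(\log(1/|w|))$. The standard removable-singularity statement of Nevanlinna theory (logarithmic growth of the characteristic at an isolated singularity forces the singularity to be at worst a pole) then extends each $g_i$ meromorphically over $w = 0$, hence extends the map $w \mapsto [1:g_1(w):\cdots:g_n(w)]$ meromorphically over $w = 0$. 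Finally, since the source is smooth one-dimensional and the target $\mathbb{P}^n$ is projective, every meromorphic map from a Riemann surface to $\mathbb{P}^n$ is holomorphic, so the extension is automatically the desired holomorphic extension of $\phi$ to $(\mathbb{C}\cup\{\infty\}) - \overline{\bigtriangleup(r_0)}$.

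The main obstacle I anticipate is making precise the comparison in the first step between the Ahlfors--Shimizu type characteristic $T_\phi(r, r_1) = \int_{r_1}^r \frac{dt}{t}\int_{r_1 < |\zeta| < t} \phi^*\omega_{\mathrm{FS}}$ used in the lemma and the classical Nevanlinna characteristic $m(r, f_i) + N(r, f_i)$ of the ratios \emph{on an annulus} rather than on a disk. The analog on $\mathbb{C}$ is textbook, but on $\mathbb{C} - \overline{\bigtriangleup(r_0)}$ one must carry along the extra boundary contribution from $\{|\zeta| = r_1\}$ produced by the two Green-function integrations by parts; once one verifies that this boundary contribution is absorbed into the $O(1)$ term, the rest of the argument is the bookkeeping indicated above together with the classical extension.
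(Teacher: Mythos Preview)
The paper does not supply its own proof of this lemma; it simply quotes Siu's Proposition~6.2 and uses it as a black box. Your proposal is the standard argument behind that proposition and is correct: trivialize $\phi^*\mathcal{O}_{\mathbb{P}^n}(1)$ over the annulus to get global holomorphic homogeneous coordinates, bound the characteristic of each ratio $f_i=\phi_i/\phi_0$ by $T_\phi(r,r_1)+O(1)$, then invoke the classical fact that a meromorphic function near an isolated singularity with $T(r)=O(\log r)$ is rational there (hence has at worst a pole), and finally observe that a meromorphic map from a Riemann surface to $\mathbb{P}^n$ is automatically holomorphic.

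The technical point you flag---comparing the Ahlfors--Shimizu characteristic on the annulus with the classical characteristic of the $f_i$---is exactly the place where one must be careful, and your resolution is the right one: the Green--Jensen formula on $\{r_1<|\zeta|<r\}$ produces an extra boundary integral over $\{|\zeta|=r_1\}$ which depends only on $r_1$ and is therefore $O(1)$. Your remark that convexity and monotonicity of $T_\phi(r,r_1)$ in $\log r$ let you pass from $\leqexc$ to all large $r$ is also correct and worth stating explicitly. There is nothing to compare against in the present paper, but what you have written is essentially Siu's argument.
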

  Motivated by the above lemma,  we introduce the following definition.
\begin{definition}\label{quantitativep}
	Let $X$ be a projective variety and $D$ be an effective Cartier divisor on $X$. 	We say a  holomorphic map 
	$f: {\mathbb C}-\overline{\bigtriangleup(r_0)}\rightarrow X\setminus D$ has {\it quantitative big Picard  property} if 
	there exists  a positive $(1, 1)$-form $\eta$ on $X$ such that, 
	for a fixed $r_1$ with $r_1>r_0$, 
	$$T_{f, \eta}(r, r_1)\leqexc  O(\log r).$$
	
	We say that $X\setminus D$ is {\it quantitatively big Picard} if
	 every holomorphic map 
	$f: {\mathbb C}-\overline{\bigtriangleup(r_0)}\rightarrow X\setminus D$ has quantitative big Picard property.
		\end{definition}
From Lemma \ref{siu}, we see that the quantitative big Picard implies Picard hyperbolic. 

  \begin{theorem}\label{qp2} If $X\setminus D$ is hyperbolically imbedded in $X$, then  $X\setminus D$ is quantitatively big Picard.
  \end{theorem}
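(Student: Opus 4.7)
The plan is to combine the infinitesimal characterization of hyperbolic imbedding with an explicit computation of the hyperbolic metric on $\mathbb{C}-\overline{\bigtriangleup(r_0)}$. First I recall that, by the standard infinitesimal criterion in \cite{kobayashi2013hyperbolic}, $X\setminus D$ being hyperbolically imbedded in $X$ is equivalent to the existence of a positive $(1,1)$-form $\eta$ on $X$ and a constant $c>0$ such that the Kobayashi-Royden pseudo-length on $X\setminus D$ dominates $c$ times the norm induced by $\eta$. Combining this with the distance-decreasing property of Kobayashi metrics, I obtain the following uniform Ahlfors--Schwarz inequality: for any hyperbolic Riemann surface $S$ with Poincar\'e $(1,1)$-form $\omega_S$ and any holomorphic map $\phi:S\to X\setminus D$, one has the pointwise comparison $\phi^*\eta\leq C\omega_S$ with a constant $C$ depending only on $\eta$ and $c$.

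Next I would apply this to the map $f:\mathbb{C}-\overline{\bigtriangleup(r_0)}\to X\setminus D$. The conformal isomorphism $w=r_0/z$ identifies the domain with the punctured unit disk $\bigtriangleup^*$, so pulling back its Poincar\'e form $\frac{i\,dw\wedge d\bar w}{|w|^2(\log|w|)^2}$ gives, up to a universal constant,
$$\omega_S=\frac{i\,dz\wedge d\bar z}{|z|^2(\log(|z|/r_0))^2}.$$
Passing to polar coordinates $z=\rho e^{i\theta}$ and using the substitution $u=\log(\rho/r_0)$ yields
$$\int_{r_1<|z|<s}\omega_S=4\pi\int_{\log(r_1/r_0)}^{\log(s/r_0)}\frac{du}{u^2}\leq\frac{4\pi}{\log(r_1/r_0)},$$
a constant $M$ independent of $s$. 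Consequently $\int_{r_1<|z|<s}f^*\eta\leq CM$ uniformly in $s$, and a single further integration against $dt/t$ on $[r_1,r]$ produces
$$T_{f,\eta}(r,r_1)\leq CM\log(r/r_1)=O(\log r),$$
which is exactly the quantitative big Picard property (and in fact a sharper bound than required, since it holds for every $r$ with no exceptional set).

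I do not anticipate a serious obstacle: the essential analytic input is the Ahlfors--Schwarz inequality built into the definition of hyperbolic imbedding, and the remainder is a two-step integration. The one point requiring genuine attention is the first step, namely ensuring that the dominating form $\eta$ can be chosen globally on the compact $X$ and that the comparison $\phi^*\eta\leq C\omega_S$ holds uniformly across all hyperbolic Riemann surfaces $S$; this is precisely the content of the equivalent infinitesimal formulation of hyperbolic imbedding in \cite{kobayashi2013hyperbolic}, which I would invoke without reproving.
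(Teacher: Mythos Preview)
Your proposal is correct and follows essentially the same route as the paper's proof: both invoke the infinitesimal characterization of hyperbolic imbedding to get a pointwise bound $f^*\eta \leq C\,\omega_S$ (the paper writes this as $\|\cdot\|_{f^*\omega}\leq c\,k_{\mathbb{C}-\overline{\bigtriangleup(1)}}$ via the distance-decreasing property of the Kobayashi--Royden metric), then explicitly integrate the Poincar\'e form on the punctured disk to obtain a uniform area bound and hence $T_{f,\eta}(r,r_1)=O(\log r)$. The only cosmetic differences are that the paper normalizes to $r_0=1$ and works directly with the Kobayashi metric on $\mathbb{C}-\overline{\bigtriangleup(1)}$, whereas you pull back the Poincar\'e form from $\bigtriangleup^*$ via $w=r_0/z$; these are equivalent.
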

\begin{proof} Let $f: {\mathbb C}-\overline{\bigtriangleup(r_0)}\rightarrow X\setminus D$ be a holomorphic map.   It suffices to consider the case $r_0 = 1$. Let  $\omega$ be a positive $(1, 1)$-form on $X$ and denote by $\|\cdot\|_{\omega}$ the
	 associated norm. Let $k_{X\setminus D}$ be the Kobayashi-Royden infinitesimal pseudo-norm on $X\setminus D$.
	Since $X\setminus D$ is hyperbolically imbedded in $X$,   there exists a positive real number $c > 0$ such that
		\begin{equation}\label{algarg}
\|\cdot\|_{\omega}	\leq c k_{X\setminus D}. \end{equation}
 On the other hand, by the distance decreasing property of the
 Kobayashi-Royden pseudo-norm,
	$$f^*k_{X\setminus D}\leq k_{ {\mathbb C}- \overline{\bigtriangleup(1)}}.$$
	Therefore $$\|\cdot\|_{f^*\omega}	\leq c k_{ {\mathbb C}- \overline{\bigtriangleup(1)}}.$$ 
	At the level of forms, this yields
	$$f^*\omega\leq c \sqrt{-1} \frac{dz\wedge d\bar{z}}{|z|^2\log^2|z|^2}.$$
	Hence 
	$$\int_{r_1\leq |z|\leq \rho} f^*\omega\leq c\left(\int_{r_1}^{\rho} \frac{1}{t^2\log t} t dt\right)
	= c \left(\frac{1}{\log r_1}-\frac{1}{ \log\rho }\right).$$
	Thus, 
	$$T_{f, \omega}(r, r_1)=  \int_{r_1}^r \left(\int_{r_1\leq |z|\leq \rho} f^*\omega\right) \frac{d\rho}{ \rho}
	\leq C \log r,$$
	where $C$ is a constant depending on $r_1$.
\end{proof}
	
	Note that the above theorem gives a new and simpler proof of the result of 
	Kwack and Kobayashi.
\begin{theorem}[see \cite{deng2020picard}, Theorem A]\label{thm:deng-zuo} 
Let $X$ be a projective manifold and $\omega$ be a K\"ahler
metric on $X$. Let $D$ be a simple normal crossing divisor on $X$. Let $f: \bigtriangleup^*
\rightarrow X\setminus D$ be a
holomorphic map. Assume that there is a Finsler pseudo-metric
$h$ of $T_X(-\log D)$ such that $\|f'(z)\|^2_h\not\equiv 0$, 
$\log \|f'(z)\|^2_h$
is locally
integrable and that the following inequality holds in the sense of currents
$$dd^c [\log \|f'(z)\|^2_h] \geq f^*\omega.$$
Then  $f$ has  quantitative big Picard property.
\end{theorem}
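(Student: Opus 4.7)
The plan is to exploit the hypothesis $dd^c[\log\|f'(z)\|_h^2] \geq f^*\omega$ via the Green--Jensen formula on an annulus, after first converting $\bigtriangleup^*$ into the exterior domain appearing in Definition \ref{quantitativep}. I make the change of variable $\zeta = 1/z$ and set $\tilde f(\zeta) := f(1/\zeta)$, so that $\|\tilde f'(\zeta)\|_h^2 = \|f'(1/\zeta)\|_h^2 / |\zeta|^4$. Since $\log|\zeta|$ is harmonic away from the origin, the extra factor of $|\zeta|^{-4}$ contributes nothing under $dd^c$, and the hypothesis transports verbatim to $dd^c \log\|\tilde f'(\zeta)\|_h^2 \geq \tilde f^*\omega$ on $\mathbb{C}\setminus\overline{\bigtriangleup(1)}$.

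Writing $u(\zeta) := \log\|\tilde f'(\zeta)\|_h^2$, I integrate $\tilde f^*\omega \leq dd^c u$ over the annulus $\{r_1 < |\zeta| < t\}$ and then against $dt/t$ from $r_1$ to $r$. A direct Stokes computation (using $\int_{|\zeta|=t} d^c u = \tfrac{t}{2}M_u'(t)$ with $M_u(t) := (2\pi)^{-1}\int_0^{2\pi}u(te^{i\theta})\,d\theta$) absorbs the boundary flux and mean value at $|\zeta|=r_1$ into a term $O(\log r)$, yielding
\begin{equation*}
T_{\tilde f,\omega}(r, r_1) \leq \frac{1}{2}\int_0^{2\pi}\log\|\tilde f'(re^{i\theta})\|_h^2\,\frac{d\theta}{2\pi} + O(\log r).
\end{equation*}
Concavity of the logarithm (Jensen's inequality) then gives
\begin{equation*}
T_{\tilde f,\omega}(r, r_1) \leq \frac{1}{2}\log\Bigl(\int_0^{2\pi}\|\tilde f'(re^{i\theta})\|_h^2\,\frac{d\theta}{2\pi}\Bigr) + O(\log r).
\end{equation*}

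The central task is therefore to bound the right-hand integral polynomially in $r$. Upper semi-continuity of $h$ and compactness of $X$ provide a smooth reference Finsler metric $h_0$ on $T_X(-\log D)$ with $h \leq C h_0$, so it suffices to control $\int_0^{2\pi}\|\tilde f'(re^{i\theta})\|_{h_0}^2\,d\theta$. In an SNC chart $(w_1,\dots,w_n)$ with $D = \{w_1\cdots w_k = 0\}$, the bundle $T_X(-\log D)$ is locally framed by $w_1\partial_{w_1},\dots,w_k\partial_{w_k},\partial_{w_{k+1}},\dots,\partial_{w_n}$, whence $\|\tilde f'\|_{h_0}^2$ is pointwise comparable to $\sum_{i\leq k}|\tilde f_i'/\tilde f_i|^2 + \sum_{j>k}|\tilde f_j'|^2$. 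Applying Nevanlinna's logarithmic derivative lemma to each $\tilde f_i$ with $i \leq k$ and a standard first-main-theorem derivative estimate to the remaining $\tilde f_j$ gives
\begin{equation*}
\int_0^{2\pi}\log^+\|\tilde f'(re^{i\theta})\|_{h_0}^2\,\frac{d\theta}{2\pi} \leqexc O\bigl(\log T_{\tilde f,\omega}(r,r_1) + \log r\bigr).
\end{equation*}

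Combining yields $T_{\tilde f,\omega}(r,r_1) \leqexc C\bigl(\log T_{\tilde f,\omega}(r,r_1) + \log r\bigr)$, which, by a standard absorption argument, forces $T_{\tilde f,\omega}(r,r_1) = O(\log r)$ and hence $f$ has quantitative big Picard property. The principal obstacle in this plan is the last estimate: passing from the local, chart-by-chart expression for $\|\tilde f'\|_{h_0}^2$ to a single global inequality requires a partition-of-unity argument, and one must verify that the constants implicit in the logarithmic derivative lemma can be chosen independently of the chart and of the map $\tilde f$.
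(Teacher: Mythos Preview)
Your approach is essentially the same as the paper's: change variable to the exterior domain, apply Green--Jensen to bound $T_{\tilde f,\omega}$ by the circle mean of $\log\|\tilde f'\|_h^2$, then control that mean via the local frame of $T_X(-\log D)$ and the logarithmic derivative lemma. Two remarks.

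First, the Jensen inequality step is a detour you never use: after writing $T_{\tilde f,\omega} \le \frac12\log\int\|\tilde f'\|_h^2$ you immediately revert to bounding $\int\log^+\|\tilde f'\|_{h_0}^2$, which already dominates $\int\log\|\tilde f'\|_h^2$ without passing through $\log\int$. Just drop the Jensen step.

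Second, and more substantively, the obstacle you flag at the end is real, and the paper resolves it cleanly in a way you should absorb. The issue is not the constants in the LDL but that your estimate $\|\tilde f'\|_{h_0}^2 \lesssim \sum_i |\tilde f_i'/\tilde f_i|^2$ is only valid on $\tilde f^{-1}(V_\lambda)$, whereas the LDL requires integration over the full circle $|z|=r$. The paper's key observation is that the SNC coordinates $z_{\lambda,1},\dots,z_{\lambda,n}$ can be taken to be restrictions of \emph{global rational functions} on $X$ (since a regular function on a quasi-projective chart is locally a ratio of homogeneous polynomials of the same degree). Then the finitely many functions $\tilde f_{\lambda,i} = z_{\lambda,i}\circ\tilde f$ are globally defined meromorphic functions on the exterior domain, one has the global pointwise bound $\log^+\|\tilde f'\|_h^2 \le \sum_{\lambda,i}\log^+|\tilde f'_{\lambda,i}/\tilde f_{\lambda,i}| + O(1)$, and the LDL applies to each term with $T_{z_{\lambda,i}\circ\tilde f}(r) \le C\,T_{\tilde f,\omega}(r)+O(1)$ because $z_{\lambda,i}$ is rational. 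This also handles the $j>k$ terms: since $|z_{\lambda,j}|$ is bounded on $\overline{V_\lambda}$, one has $|\tilde f'_{\lambda,j}|^2 \le C|\tilde f'_{\lambda,j}/\tilde f_{\lambda,j}|^2$ there, so everything reduces to logarithmic derivatives. No partition of unity is needed.
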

\begin{proof}  We make a  change variable of $z:=1/\zeta$ and consider 
 $f: {\mathbb C}- \overline{\bigtriangleup(1)}\rightarrow X$. 
	From the assumption we get  
	\begin{eqnarray} \label{deng7}
		T_{f, \omega}(r, r_1) &=& \int_{r_1}^r \left(\int_{r_1\leq |z|\leq t} f^*\omega \right){dt\over t} \nonumber \\
		&\leq &   \int_{r_1}^r \left(\int_{r_1\leq |z|\leq t} dd^c [\log \|f'(z)\|^2_h] \right){dt\over t} \nonumber\\
		&=&  \int_0^{2\pi} \log \|f'(re^{i\theta })\|_h \frac{d\theta}{2\pi} -   \int_0^{2\pi} \log \|f'(r_1e^{i\theta })\|_h \frac{d\theta}{2\pi} \nonumber\\
		&= &  \int_0^{2\pi} \log \|f'(re^{i\theta })\|_h \frac{d\theta}{2\pi} +O(1),
		\end{eqnarray}
	where the third equality follows from the Green-Jensen formula.
	We now estimate the integral above. According to (\cite{har77}, I.3), a function $f$ is regular on $X\subset {\mathbb P}^N$ if, for every $P\in X$,  there is a Zariski open neighborhood $U$ 
	with $P\in U$, and homogeneous polynomials $g, h$ in $(N+1)$-variables of the same degree with $h$ is nowhere zero on $U$, and $f=g/h$ on $U$. 
	Since $D$ is a simple normal crossing divisor, we can choose, at each point $P\in \Supp D$,  a local coordinate chart $(U,z_1,\dots,z_n)$ near $P$ 
	such that $D|_U=(z_1\cdots z_s=0)$. From the remark above, we see that  $z_1,\dots,z_n$ are (global) rational functions on $X$.
		Since $X$ is compact, we can cover $X$ by finitely many open subsets (in the complex topology) $\{U_{\lambda}\}_{\lambda\in \Lambda}$ 
		with coordinates $(z_{\lambda, 1},\dots,z_{\lambda, n})$ such that $D|_{U_{\lambda}}=(z_{\lambda,1}\cdots z_{\lambda,s(\lambda)}=0)$ for some $1\leq s(\lambda) \leq n$ (note: $s(\lambda)$ may be empty, i.e.  $D|_{U_{\lambda}}$ may be an empty set). 
	Take  a relative compact subcovering $\{V_{\lambda}\}_{\lambda\in \Lambda}$ with $\overline{V_{\lambda}}\subset U_{\lambda}$ (note that all closure are taken with respect to the complex topology).
Denote by $f_{\lambda, i}=z_{\lambda, i}\circ f$.
	Then, for every $\lambda \in \Lambda$,  there is a constant $C_{\lambda}>0$
such that for $t\in f^{-1}({V_\lambda})$,
	\begin{equation}	
	\begin{split}
	\|f'(t)\|_h^2 
		&\leq  C_{\lambda}\left(
			  \sum_{i=1}^{s(\lambda)} \left| \frac{f_{\lambda, i}'(t)}{f_{\lambda,i}(t)}  \right|^2    +\sum_{j=s(\lambda)+1}^{n}|f_{\lambda, j}'(t)|^2	
			\right) \\
		&\leq C_{\lambda}\left(
			\sum_{i=1}^{s(\lambda)} \left|  \frac{f_{\lambda, i}'(t)}{f_{\lambda,i}(t)}   \right|^2
			    + \sum_{j=s(\lambda)+1}^{n}{\hat{C}_{\lambda}|f_{\lambda, j}'(t)|^2\over |f_{\lambda, j}(t)|^2}
			\right),
	\end{split}
    \end{equation}
where 	$\tilde{C_{\lambda}}:=\max_{1\leq i\leq n}\sup_{x\in \overline{V_{\lambda}} }z_{\lambda, i}(x)$, which exists because $z_{\lambda, i}$ are holomorphic on $U_\lambda$ and $\overline{V_{\lambda}}$ is compact. Notice that $\Lambda$ is finite, 
	 by the logarithmic derivative lemma,
	\begin{equation} \label{dengn}
	\begin{split}
	\int_0^{2\pi} \log^+ \|f'(re^{i\theta })\|_h \frac{d\theta}{2\pi} 
	&\leq \int_{0}^{2\pi}\log^+  
		\left( \sum_{\lambda\in \Lambda} \sum_{j=1}^n \left|{f_{\lambda j}'\over f_{\lambda j}}(re^{i\theta})\right|\right) 
		{d\theta\over 2\pi}+O(1)\\
	&\leq \sum_{\lambda \in \Lambda}\sum_{j=1}^{n}\int_0^{2\pi} \log^+ \left|{f_{\lambda j}'\over f_{\lambda j}}(re^{i\theta})\right|{d\theta\over 2\pi} + O(1)\\
	&\leq O(\log^+T_{z_{\lambda, j}\circ f}(r, r_1)+\log r)\\
	&\leq O(\log^+T_{f,\omega}(r, r_1)+\log r),
	\end{split}
	\end{equation}
where we used the fact that, for rational function $g$, $T_{g\circ f}(r, r_1)\leq T_{f, \omega}(r, r_1)+O(1)$ (see \cite{noguchi2013nevanlinna}, Theorem 2.13).
So the theorem is proved by  combining  (\ref{deng7}) and  (\ref{dengn}).

\end{proof}

Using the above result,   Deng-Lu-Sun-Zuo \cite{deng2020picard} proved   that {\it
the big Picard theorem holds for the moduli stack $M_h$ of polarized complex
projective manifolds with semi-ample canonical bundle and Hilbert polynomial $h$, i.e., for an algebraic
variety $U$, a compactification $Y$ and a quasi-finite morphism $U \rightarrow M_h$ induced by an algebraic
family over $U$ of such manifolds, $U$ is quantitatively big Picard. }

We introduce the following more general 
definition of {\it Nevanlinna pair}, motivated by the above discussion, as well as the 
 Second Main Theorems in Nevanlinna theory.  We use the Nevanlinna theory on open  parabolic Riemann surfaces recently developed by
Paun and Sibony \cite{paun2014value}.  Note that Stoll \cite{stoll1983ahlfors}  developed the Nevanlinna theory on
a more general parabolic complex manifold through  Ahlfors' approach. Let $Y$ be an open parabolic Riemann surface defined in the introduction. 
Denote by  $B(r):=(\sigma<r)$ the open parabolic disk of radius $r$ and by $S(r):=(\sigma=r)$ the parabolic circle of radius $r$. 
  When $r$ is a regular value of $\sigma$ 
	 (which is the case for almost all $r$), $S(r)$ is
	smooth and one considers on it the measure $d\mu_r: = d^c \log \sigma|_{S(r)}$.  Fix $r_0>0$ such that $\log \sigma$ is harmonic 	outside $\overline{B(r_0)}$. Stokes theorem then implies that, for $r, r_1> r_0$,
	$$\int_{S(r)} d\mu_r - \int_{S(r_1)}  d\mu_r = \int_{B(r) \backslash \overline{B(r_1)}} dd^c  \log \sigma =0.$$
	Thus $\int_{S(r)} d\mu_r$ is independent of $r$ when $r$ is large enough. We denote this constant by $\varsigma$.
	Let $X$ be a projective variety and $D$ be an effective Cartier divisor on $X$. 
	 Let 	$f: Y\rightarrow X$ be a holomorphic map, and let $\eta$ be a positive $(1, 1)$ form on $X$.  We define
	$$T_{f, \eta}(r):=\int_{1}^r \left(\int_{B(t)} f^*\eta\right){dt\over t}.$$
	Let $L$ be an ample line bundle on $X$, $h$ be a Hermitian metric on $L$, we write $T_{f, L}(r): =T_{f, c_1(L, h)}(r)$. It is independent, up to a bounded term, the choices of the metric $h$. For an effective Cartier divisor $D$, let $[D]$ denote the corresponding line bundle. Then  $[D]=L_1\otimes L_2^*$ for ample line bundles $L_1,L_2$, and we define $T_{f, D}(r):= T_{f,L_1}(r)-T_{f,L_2}(r)$. Define
		$$m_f(r,D)= \int_{S(r)} \log\frac{1}{\|s_D\circ f\|_h^2}d\mu_r,$$
where $s_D$ is the canonical section of $[D]$ and $h$ is  a Hermitian metric on $[D]$.
For the divisor $f^*D$ on $Y$, we can write $f^*D = \sum_{a\in Y} \nu(a)\cdot a$, where $\nu(a)$ is the order of $f^*D$ at $a$. For an integer $k$, define the  $k$-th truncated divisor $f^*D^{[k]}:=\sum_{a\in Y}\min\{\nu(a),k\}a$.
	 Let $n_{f,D}(r):=\sum_{a\in B(r)}\nu_{f^*D}(a)$ be degree of $f^*D$ counted inside $B(r)$, and $n_{f,D}^{[k]}(r)$ denote its truncated version. In the case $k=1$, $n_{f,D}^{[1]}(r)$ coincides with the number of set theoretic preimage of $D$ in $B(r)$, which we also denote it by $\bar{n}_{f,D}(r)$. Let
	 $$N_{f}(r,D):=\int_{1}^r n_{f,D}(t)\frac{dt}{t},$$
	 and similarly
	 $$N_{f}^{[k]}(r,D):=\int_{1}^{r}n_{f,D}^{[k]}(t)\frac{dt}{t},$$
     $$\overline{N}_f(r, D): = \int_{1}^r \bar{n}_{f, D}(t) {dt\over t}.$$
By identifying the divisor $f^*D$ with its current of integration and by the Poincare-Lelong formula, the following equality holds in the sense of currents:
$$-dd^c[\log \|s_D\circ f\|_h^2] = f^*c_1([D],h) - f^*D.$$
Then the Green-Jensen formula for parabolic Riemann surfaces (see (\ref{eqn:GJ}) below) implies the First Main Theorem
\begin{equation}\label{eqn:np-fmt}
m_f(r,D) + N_f(r,D) = T_{f,D}(r)+O(1).
\end{equation}
 	With these notions, we define the notion {\it Nevanlinna pair} for $(X, D)$ as in Defintion \ref{nevp} in the introduction.
	
	We have shown that if  $(X, D)$ is a Nevanlinna pair then $X\setminus D$ is Brody hyperbolic. Conversely,  similar to 
	 the Theorem 6.2 in \cite{BB}, we have the following result.
\begin{theorem}\label{bb} {\it 	Assume that $X\setminus D$ is hyperbolically imbedded in $X$. 
		Then there is a positive $(1, 1)$-form $\eta$ on $X$ such that for any parabolic Riemann surface $Y$ and every holomorphic map $f: Y\rightarrow X$ with $f(Y)\not\subset D$
		and for $\delta >0$, one has $$T_{f, \eta}(r)\leqexc \overline{N}_f(r, D) - \mathfrak{X}_{\sigma}(r) + (\delta+2\varsigma)\log r + O(1).$$ 
	In particular,  $(X, D)$ is a Nevanlinna pair.}
\end{theorem}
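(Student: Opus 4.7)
The plan is to adapt the argument of Brotbek--Brunebarbe \cite{BB}, whose Theorem 6.2 treats essentially the same statement for the Riemann surfaces $\mathbb{C}$ and $\bigtriangleup^*$, to the setting of a general parabolic Riemann surface. The hyperbolic imbedding of $X \setminus D$ in $X$ is used only as input to a construction of a singular Finsler pseudo-metric $h$ on the logarithmic tangent bundle $T_X(-\log D)$, together with a positive $(1,1)$-form $\eta$ on $X$, such that for every non-constant holomorphic map $f : Y \to X$ with $f(Y) \not\subset D$ one has a current inequality on $Y$ of the shape
$$dd^c \log \|df(\xi)\|_h^2 \geq f^*\eta + [Z(df(\xi))] - [f^{-1}(D)_{\mathrm{red}}],$$
where $Z(df(\xi))$ is the zero divisor of the section $df(\xi) \in H^0(Y, f^*T_X)$ (the ramification of $f$ with respect to $\xi$). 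The crucial feature, which is what \cite{BB} contributes, is that the constants in this construction depend only on $X$, $D$, and the hyperbolic imbedding, and not on $f$ or $Y$.

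Granting this Finsler metric, the remaining steps are those of parabolic Nevanlinna theory. First, integrating against $\frac{dt}{t}$ on $B(t)$ for $t \in [1,r]$ and invoking the Green--Jensen formula on $Y$ converts the current inequality into
$$T_{f,\eta}(r) + N_{Z(df(\xi))}(r) - \overline{N}_f(r,D) \leq \int_{S(r)} \log \|df(\xi)\|_h^2 \, d\mu_r + O(1).$$
Next, one bounds the boundary integral above by a parabolic analogue of the logarithmic derivative lemma (P\u{a}un--Sibony), applied to a local-coordinate expression for $df(\xi)$ in a finite cover of $X$ by logarithmic charts for $D$ (this is the same bookkeeping done in the proof of Theorem \ref{thm:deng-zuo} in the excerpt). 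The LDL produces a bound of the form $C\log^+ T_{f,\eta}(r) + \delta \log r$ plus a contribution from the correction factor $d\sigma(\xi)$. Finally, one invokes the identity from the excerpt,
$$\int_{S(r)} \log |d\sigma(\xi)|^2 \, d\mu_r = -\mathfrak{X}_\sigma(r) + 2\varsigma \log r + O(1),$$
to identify the promised topological combination $-\mathfrak{X}_\sigma(r) + 2\varsigma\log r$. Since $N_{Z(df(\xi))}(r) \geq 0$ and the $C\log^+ T_{f,\eta}(r)$ term can be absorbed via the standard Borel growth lemma into $\delta \log r$ outside an exceptional set, one obtains exactly the claimed inequality.

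I expect the main obstacle to be the uniformity of constants, as flagged in the remark following Definition \ref{nevp}: the coefficient of $\log r$ must equal $\delta + 2\varsigma$ and no larger, since a worse constant would kill the later application to algebraic hyperbolicity (Theorem \ref{alg}). Ensuring this requires carefully tracking the dependence of the Finsler metric $h$ in the Brotbek--Brunebarbe construction on $f$ (it must depend only on $X$, $D$, and the imbedding) and carefully localizing the parabolic LDL so that its implicit constants do not depend on $Y$. A secondary technical point is the verification that the ramification term $N_{Z(df(\xi))}(r)$ combines with the boundary integral against $\log^-$ in the expected way, which amounts to a Riemann--Hurwitz calculation on $B(r)$ using that $\xi$ is a nowhere-vanishing global vector field on $Y$.
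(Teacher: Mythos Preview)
Your approach differs substantively from the paper's, and the step where you invoke the coordinate-wise logarithmic derivative lemma would not deliver the sharp inequality as stated. The paper does not pull back a Finsler metric on $T_X(-\log D)$; it works intrinsically on $Y$. Setting $Y^* := Y \setminus (f^*D)_{\mathrm{red}}$, the paper takes $\omega_{Y^*}$ to be the $(1,1)$-form of the Kobayashi metric on the hyperbolic surface $Y^*$. Hyperbolic imbedding plus distance-decreasing give $c\,f^*\eta \leq \omega_{Y^*}$, hence $c\,T_{f,\eta}(r) \leq T_{\omega_{Y^*}}(r)$. Since $\omega_{Y^*}$ has Gaussian curvature $-1$ one has $\omega_{Y^*} = \operatorname{Ric}\omega_{Y^*}$, and Lemma~\ref{blemma} (extension of bounded subharmonic functions across isolated points) yields the current inequality $[\operatorname{Ric}\omega_{Y^*}] \leq [(f^*D)_{\mathrm{red}}] + \operatorname{Ric}[\omega_{Y^*}]$ on all of $Y$. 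Integrating and using Green--Jensen, $T_{\omega_{Y^*}}(r) \leq \overline{N}_f(r,D) + \int_{S(r)} \log \|\xi\|^2_{\omega_{Y^*}}\,d\mu_r + O(1)$, and the boundary term is controlled by a \emph{single} application of Lemma~\ref{calculus}. That single application is precisely what produces $-\mathfrak{X}_\sigma(r) + (\delta+2\varsigma)\log r$ with coefficient $1$ and with no $\mathfrak{E}_\sigma(r)$ term.

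By contrast, your plan to bound $\int_{S(r)} \log\|df(\xi)\|_h^2\,d\mu_r$ via the parabolic LDL applied coordinate-by-coordinate over a finite cover, ``the same bookkeeping as in Theorem~\ref{thm:deng-zuo}'', does not give this. Each invocation of Proposition~\ref{prop:ldlpaun} carries its own copy of $\tfrac{1}{2}\bigl[-\mathfrak{X}_\sigma(r) + (\delta+2\varsigma)\log r + \mathfrak{E}_\sigma(r)\bigr]$, so summing over $n$ coordinates and $|\Lambda|$ charts leaves you with a constant $C>1$ in front of the topological block \emph{and} an extra $\mathfrak{E}_\sigma(r)$; compare the shape of Theorem~\ref{log}(a). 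Your sentence ``the LDL produces a bound of the form $C\log^+ T_{f,\eta}(r) + \delta\log r$ plus a contribution from the correction factor $d\sigma(\xi)$'' misdiagnoses where the topological term enters: it is baked into every LDL application, not added once at the end. After rescaling $\eta$ by $1/C$ your route would still establish that $(X,D)$ is a Nevanlinna pair (Definition~\ref{nevp} allows $\mathfrak{E}_\sigma(r)$), but it does not recover the displayed inequality of Theorem~\ref{bb}. The remedy is to skip the coordinate LDL and apply Lemma~\ref{calculus} once to the pulled-back pseudo $(1,1)$-form on $Y$ --- which is exactly what the paper does with $\omega_{Y^*}$.
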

The above theorem, with the term involving $\log r$ explicitly given by $(\delta + 2\varsigma )\log r$, 
   is slightly stronger than the Theorem 6.2 of  Brotbek and  Brunebarbe \cite{BB}.
  Nevertheless we emphasize that we claim no originality
here.

To prove the theorem,  we first recall some notations. Let $M$ be a Riemann surface with a  local coordinate $z=x+\sqrt{-1} y$. 
Denote by 
$$d= \partial +\bar{\partial},~~~~d^c = {\sqrt{-1}\over 4\pi} (\bar{\partial} - \partial)
~~\mbox{so that} ~~dd^c = {\sqrt{-1}\over 2\pi} \partial \bar {\partial}. $$
A conformal  pseudo-metric (resp. metric) on $M$ is given by $h=2 \lambda (dx^2+dy^2)= 2\lambda dzd{\bar z}$ in any local coordinate $(U,z)$ with $z=x+iy$,  where $\lambda$ is a nonnegative (resp. positive) smooth function. It induces a  pseudo-norm on the holomorphic tangent bundle $T_M$ of $M$ defined by $\|\xi\|^2_h=h(\xi, \bar{\xi})= 2\lambda |\gamma|^2$, where $\xi= \gamma {\partial \over \partial z}\in \Gamma(U, T_M)$.  
  Note that, in particular,  
$$\lambda(z) = {1\over 2} \left\|{\partial \over \partial z}\right\|_h^2.$$
The {\it Gaussian curvature} of $h$ is given by
\begin{equation}
K=-{1\over 4\lambda} \bigtriangleup \log \lambda,
\end{equation}
where $\bigtriangleup = {\partial^2 \over \partial x^2} +  {\partial^2 \over \partial y^2}
= 4  {\partial^2 \over  \partial z\partial \bar{z}}$ is the usual Laplacian. 
Let $$\omega = \lambda(z)  {\sqrt{-1}\over 2\pi} dz\wedge d{\bar z}$$
be its {\it associated K\"ahler pseudo form} (or {\it pseudo metric form}).
To $\omega$ we associate the {\it Ricci form} $\mbox{Ric}(\omega): =  dd^c \log \lambda$. Then we have
$$
\mbox{Ric}(\omega) =  dd^c \log \lambda = -K \omega.$$
If $\omega$ is such that in any local coordinate
chart $\log \lambda$ is locally integrable (in which case, we say abusively that $\log \omega$ is locally integrable), one can
define on $M$ the current
$$\operatorname{Ric}[\omega] = dd^c [\log \lambda],$$
which, as in standard, this means that for any smooth function $\phi$ with compact support, one has 
$\operatorname{Ric}[\omega](\phi) := \int_M (\log \lambda)dd^c\phi$. 

We recall the following lemma which is standard in the potential theory. 
\begin{lemma} [See Lemma 2.1 in \cite{BB}]\label{blemma}
	Let $\psi$  be a subharmonic and $C^{\infty}$ function on $\bigtriangleup^*$.
		Suppose that $\psi$  is bounded above. Then $\psi$ 
		extends  to a subharmonic function on $\bigtriangleup$.
		  Moreover, the $(1, 1)$-form $dd^c\psi$ is locally integrable on $\bigtriangleup$ and the
		 following inequality holds in the sense of currents:
		 		$$[dd^c\psi]\leq dd^c[\psi].$$\end{lemma}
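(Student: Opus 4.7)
The plan is to combine the classical removable singularity theorem for bounded subharmonic functions with a decomposition of the Riesz measure at the origin.

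First, I would extend $\psi$ across the origin. Since $\psi$ is subharmonic on $\bigtriangleup^*$ and bounded above, define
$$\tilde{\psi}(z) := \begin{cases} \psi(z), & z \in \bigtriangleup^*, \\ \limsup_{\zeta \to 0}\psi(\zeta), & z = 0. \end{cases}$$
The boundedness assumption guarantees $\tilde{\psi}(0) < +\infty$. To show that $\tilde{\psi}$ is subharmonic on $\bigtriangleup$, I would first check upper semicontinuity (which is immediate from the definition at $0$ and continuity elsewhere) and then verify the sub-mean-value inequality at $0$. For the latter, if $\tilde{\psi}(0) = -\infty$ the inequality is trivial; otherwise one applies the standard trick of comparing $\psi$ on the annulus $\{\epsilon < |z| < r\}$ with the harmonic function having matching boundary values on $\{|z|=r\}$ and $\{|z|=\epsilon\}$, then letting $\epsilon \to 0$ and using that $\epsilon \log \epsilon \cdot (\text{upper bound}) \to 0$ to absorb the inner boundary contribution. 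This is the classical removable singularity theorem for bounded-above subharmonic functions.

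Next, I would analyze the current $dd^c[\tilde{\psi}]$. Since $\tilde{\psi}$ is subharmonic on $\bigtriangleup$, by Riesz representation the current $dd^c[\tilde{\psi}]$ is a positive Radon measure $\mu$ on $\bigtriangleup$, in particular locally finite. On the open set $\bigtriangleup^* = \bigtriangleup \setminus \{0\}$, where $\tilde{\psi} = \psi$ is smooth, the current $dd^c[\tilde{\psi}]$ coincides with the integration current $[dd^c\psi]$ against the smooth positive $(1,1)$-form $dd^c\psi$. Consequently the smooth form $dd^c\psi$ represents the restriction of the Radon measure $\mu$ to $\bigtriangleup^*$, so $dd^c\psi$ is locally integrable near $0$ (being dominated by a locally finite measure), and $[dd^c\psi]$ makes sense as a current on the whole disk $\bigtriangleup$, i.e.\ on $\bigtriangleup$.

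Finally, I would deduce the inequality. The difference $dd^c[\tilde{\psi}] - [dd^c\psi]$ is a positive current on $\bigtriangleup$ (both pieces are positive, and by construction it vanishes on the open set $\bigtriangleup^*$), hence it is a positive measure supported at the single point $\{0\}$; such a measure must be a nonnegative multiple $c\,\delta_0$ with $c \geq 0$. Therefore
$$dd^c[\tilde{\psi}] \;=\; [dd^c\psi] + c\,\delta_0 \;\geq\; [dd^c\psi]$$
as currents, which is the required inequality.

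The only delicate step is the removable singularity argument in the first paragraph; everything that follows is a formal consequence once the subharmonic extension exists and is identified with $\psi$ on $\bigtriangleup^*$. Since this extension result is completely classical (cf.\ standard potential-theory texts), the lemma reduces to invoking it and unwinding the definition of the associated Riesz measure.
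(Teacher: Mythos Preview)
The paper does not supply its own proof of this lemma; it is quoted from \cite{BB} (Lemma~2.1) and used as a black box in the proof of Theorem~\ref{bb}. Your argument is the standard one and is correct in outline.

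One small wrinkle: in the final step you assert that $dd^c[\tilde\psi] - [dd^c\psi]$ is a positive current, with the parenthetical justification ``both pieces are positive''. A difference of two positive currents is of course not positive in general, so this is not a valid reason. The clean way to finish is: the difference is a current of order zero supported at $\{0\}$, hence equals $c\,\delta_0$ for some real constant $c$; and since $[dd^c\psi]$ is the restriction of the positive measure $\mu = dd^c[\tilde\psi]$ to $\bigtriangleup^*$, one has $c = \mu(\{0\}) \geq 0$. With this adjustment the proof is complete.
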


\begin{lemma}[Green-Jensen formula, see Proposition 3.1 in \cite{paun2014value}]\label{GJ}	{ Let $Y$ be a non-compact parabolic Riemann
		surface equipped with a parabolic exhaustion function $\sigma$. Let $g: Y\rightarrow  [-\infty, \infty)$ be a function
		such that $dd^c[g]$ is a current with order zero, i.e. $g$ can  locally
		near every point of $Y$  be written as the difference of two subharmonic functions. Then, for  $r\ge 1$ 
		 large enough,
		\begin{equation}\label{eqn:GJ}
		\int_{1}^r {dt\over t} \int_{B(t)} dd^c[g]= \int_{S(r)} g d\mu_r+O(1).
		\end{equation}
		}
\end{lemma}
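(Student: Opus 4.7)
My plan is the classical two-step strategy: establish the formula first for smooth $g$ via Green's identity on an annular region, then upgrade to currents of order zero by mollification.

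For the smooth case, I assume $g\in C^2(Y)$ and fix $r_0\geq 1$ large enough so that $\Sigma\subset B(r_0)$; then $\log\sigma$ is smooth and harmonic on $Y\setminus\overline{B(r_0)}$. The piece $\int_1^{r_0}\frac{dt}{t}\int_{B(t)}dd^c g$ is a finite constant depending only on $g$ and $Y$, hence absorbed into $O(1)$. For the piece $\int_{r_0}^r$, I apply Fubini to rewrite it as $\int_{B(r)}\bigl[\log r - \log\max(r_0,\sigma(y))\bigr]dd^c g(y)$. I then apply Green's identity on the annulus $\Omega=B(r)\setminus\overline{B(r_0)}$ to the pair $(g,\log\sigma)$. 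On $\Omega$ we have $dd^c\log\sigma\equiv 0$; on $S(t)$ we have $\log\sigma\equiv\log t$ and $d^c\log\sigma|_{S(t)}=d\mu_t$. Together with Stokes' theorem $\int_{B(t)}dd^c g=\int_{S(t)}d^c g$, the boundary terms containing $\log r$ cancel, yielding
$$\int_{r_0}^r \frac{dt}{t} \int_{B(t)} dd^c g = \int_{S(r)} g\, d\mu_r - \int_{S(r_0)} g\, d\mu_{r_0},$$
whose second term is an $r$-independent constant, i.e. $O(1)$. This proves the smooth case.

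For the general case, $g$ is locally the difference of two subharmonic functions, hence locally $L^1_{\text{loc}}$ and representable as a decreasing limit of smooth functions $g_\epsilon$ obtained by convolution with a mollifier in local charts and glued by a partition of unity. Applying the smooth case to $g_\epsilon$, the left side passes to the limit by weak convergence of the measures $dd^c g_\epsilon\rightharpoonup dd^c[g]$ tested against the bounded continuous weight $\log r-\log\max(r_0,\sigma)$; the right side uses Sard's theorem to restrict to regular values of $\sigma$ (for which $S(r)$ is a smooth compact curve) and the $\mu_r$-integrability of $g$ along such slices, together with monotone/dominated convergence. The main obstacle is handling the boundary term $\int_{S(r)}g\,d\mu_r$ when $g$ can assume $-\infty$ on polar sets inside $Y$: this is overcome by a slicing argument showing that $g|_{S(r)}$ is $\mu_r$-integrable for almost every $r$ (a standard consequence of local $L^1$ bounds and Fubini applied to $d\sigma\wedge d\mu_r$), and then the identity extends to all $r>r_0$ by absorbing the measure-zero exceptional set into the $O(1)$ error. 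This slicing framework is precisely the one developed in the parabolic Nevanlinna theory of Stoll and P\u{a}un--Sibony, so in practice one may also cite \cite{paun2014value}, Proposition 3.1, and observe that the present setup (finite singular set $\Sigma$ for $\log\sigma$) is a special case.
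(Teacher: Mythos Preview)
The paper does not prove this lemma; it is stated with a direct citation to Proposition~3.1 of P\u{a}un--Sibony \cite{paun2014value}. Your two-step argument (Green's identity on the annulus $B(r)\setminus\overline{B(r_0)}$ for smooth $g$, then mollification) is the standard proof and the smooth computation you wrote is correct.

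Two minor imprecisions worth flagging. First, the mollification of a \emph{difference} of subharmonic functions is not in general a decreasing sequence; only each subharmonic piece $u_\epsilon\searrow u$ is monotone. This does not damage the argument, since $g_\epsilon\to g$ in $L^1_{\mathrm{loc}}$ and $dd^c g_\epsilon\rightharpoonup dd^c[g]$ weakly as measures, which is all you use on the left side. On the right side you should invoke $L^1$ convergence along slices (Fubini applied to $g\in L^1_{\mathrm{loc}}$ against $d\sigma\wedge d^c\log\sigma$) rather than monotone convergence. Second, the sentence ``the identity extends to all $r>r_0$ by absorbing the measure-zero exceptional set into the $O(1)$ error'' is not the right formulation: an $O(1)$ term must be uniformly bounded, and it cannot compensate for values of $r$ where $\int_{S(r)}g\,d\mu_r$ is undefined. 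The correct reading (and the one used throughout the paper) is that the formula holds for every regular value $r$ of $\sigma$, which by Sard is almost every $r$, with the $O(1)$ constant genuinely independent of such $r$.
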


We need a precise integral formula which relates the exhaustion and Euler characteristic of $B(r)$, which is due to  \cite{paun2014value} with a more detailed proof available in (\cite{BB}, Propositon 2.5). However here we require  the explicit constant involving $\log r$, so we  attach a proof here following the proof of Proposition 2.5 \cite{BB}.
\begin{lemma}[Compare with \cite{paun2014value}, Proposition 3.3]\label{bnew} Same notations as above. we have 
		$$-\int_{1}^r \chi_{\sigma}(t) {dt\over t}  =\int_{S(r)} \log |d\sigma(\xi)|^2 d\mu_r -2\varsigma\log r+O(1).$$
	\end{lemma}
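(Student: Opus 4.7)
The strategy is to combine the Green--Jensen formula (Lemma \ref{GJ}) with a Poincar\'{e}--Hopf calculation applied to the gradient of $\tau := \log\sigma$. First I isolate the $2\varsigma\log r$ term: since $d\tau(\xi) = d\sigma(\xi)/\sigma$ and $\sigma = r$ on $S(r)$,
$$\log|d\sigma(\xi)|^2 = \log|d\tau(\xi)|^2 + 2\log r \quad \text{on } S(r),$$
so integrating against $d\mu_r$ (with $\int_{S(r)} d\mu_r = \varsigma$) reduces the lemma to showing
$$\int_{S(r)}\log|d\tau(\xi)|^2\, d\mu_r = -\int_1^r \chi_\sigma(t)\,\frac{dt}{t} + O(1).$$

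Because $\tau$ is harmonic away from $\Sigma$, the $(1,0)$-form $\partial\tau$ is meromorphic on $Y$: a simple pole at each $P_i\in\Sigma$ (coming from $\tau \sim k_i\log|z|$) and a zero of order $n_q-1$ at each critical point $q$ of $\tau$ (where, in suitable local coordinates, $\tau - \tau(q) = \operatorname{Re}(c z^{n_q}) + \cdots$ with $n_q\ge 2$). Since $\xi = \gamma\,\partial/\partial z$ is nowhere vanishing, the contraction $d\tau(\xi) = \gamma\,\tau_z$ is a global meromorphic function on $Y$ with the same divisor. Poincar\'{e}--Lelong then gives
$$dd^c[\log|d\tau(\xi)|^2] = \sum_{q\in\operatorname{crit}(\tau)}(n_q-1)\,\delta_q - \sum_{P_i\in\Sigma}\delta_{P_i},$$
a current of order zero to which Green--Jensen applies, producing
$$\int_{S(r)}\log|d\tau(\xi)|^2\, d\mu_r = \int_1^r \frac{dt}{t}\biggl(\sum_{q\in B(t)}(n_q-1) - \#(\Sigma\cap B(t))\biggr) + O(1).$$

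It remains to identify the bracketed integrand with $-\chi_\sigma(t)$, which I would do by a Poincar\'{e}--Hopf calculation on the compact manifold-with-boundary $\overline{B(t)}$ using $\nabla\tau$. For almost every $t$, $S(t)$ is smooth and disjoint from $\Sigma\cup\operatorname{crit}(\tau)$, and $\nabla\tau$ is outward-pointing and nonvanishing along $S(t)$ because $S(t) = \{\tau = \log t\}$ is a level set and $\tau$ increases with $\sigma$. At each $P_i$, $\nabla\tau \sim k_i(x,y)/|z|^2$ is a radial source of index $+1$; at each critical point $q$, the harmonic expansion gives (in complex notation) $\nabla\tau \sim \overline{n_q c\, z^{n_q-1}}$, whose winding number is $1 - n_q$. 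Summing indices yields
$$\chi_\sigma(t) = \#(\Sigma\cap B(t)) - \sum_{q\in B(t)}(n_q-1),$$
which is precisely the negation of the bracketed integrand, completing the proof.

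The main obstacle is justifying Poincar\'{e}--Hopf cleanly at the $\Sigma$-singularities, where $\nabla\tau$ blows up rather than vanishes. The clean route is to excise small disks $B_\varepsilon(P_i)$ and apply Poincar\'{e}--Hopf on $\overline{B(t)}\setminus\bigcup B_\varepsilon(P_i)$, where $\nabla\tau$ is regular and boundary-transverse (outward on $S(t)$, inward on each $S_\varepsilon(P_i)$), then account for each $S_\varepsilon(P_i)$ via the winding $+1$ of the radial field there, effectively assigning index $+1$ to each $P_i$. One should also verify that the $O(1)$ produced by Green--Jensen depends only on the behavior of $\log|d\tau(\xi)|^2$ near $t=1$ and is hence independent of $r$, which is immediate since only finitely many divisor points lie in $B(1)$.
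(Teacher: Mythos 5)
Your proposal is correct and takes essentially the same approach as the paper: Green--Jensen applied to $\log|\partial_\xi\log\sigma|^2$ (your $\log|d\tau(\xi)|^2$, the same function), Poincar\'e--Lelong on the meromorphic function $\partial_\xi\log\sigma$, and a Poincar\'e--Hopf count to identify the resulting integer with $-\chi_\sigma(t)$. The only cosmetic differences are that you split off the $2\varsigma\log r$ term at the outset rather than at the end, and you run Poincar\'e--Hopf with $\nabla\tau=\nabla\sigma/\sigma$ and an excision argument at the $P_i$ where it blows up, whereas the paper uses $v_{\mathbb{R}}=\nabla\sigma$ and invokes the invariance of the local index under multiplication by a positive (possibly unbounded) scalar to read off index $+1$ there; these are equivalent ways of handling the same subtlety.
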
	
 
\begin{proof} 
	The proof is similar to the proof of Proposition 2.5 in  \cite{BB}.   
Under our assumption, for the parabolic exhaustion function $\sigma$ of $Y$, $\log \sigma$ is harmonic on $Y$ except at the points $\{P_1, \dots, P_k\}$.  
Let $r>1$ such that $P_i\in B(r)$ for $i=1, \dots, k$. Since $\xi$  is of type (1, 0), one has $d\sigma(\xi) = \partial_{\xi}\sigma$, where $\partial_\xi \sigma$ is the holomorphic directional directive of $\sigma$ by the vector field $\xi$.
Since $\xi$ is holomorphic and $\log \sigma$ is harmonic outside $\{P_1, \dots, P_k\}$,  it follows that $\partial_{\xi}\log \sigma$ is a holomorphic function outside $\{P_1, \dots, P_k\}$. 
A direct computation shows that $\sigma \cdot \partial_{\xi}\log \sigma =\partial_{\xi}\sigma$. Therefore, outside $\{P_1, \dots, P_k\}$ where $\sigma$ has zero, $\partial_{\xi}\sigma$  vanishes only when  $\partial_{\xi}\log \sigma $ does.
Consider the vector field $v = \overline{\partial_{\xi}\sigma}\cdot \xi$. 
Let $(T_Y)_{\mathbb{R}}$ be the real tangent bundle of $Y$. Let $v_{\Bbb R}$ be the corresponding real vector field associated to $v$ via the canonical isomorphism $T_Y\simeq (T_{Y})_{\mathbb{R}}$ given by $\frac{1}{2}(\frac{\partial}{\partial x}-i\frac{\partial }{\partial y})\mapsto \frac{\partial}{\partial x}$, it follows that $v_{\mathbb{R}}$ has singularities on $\{P_1,\dots,P_k\}$ and the zeros of $\partial_{\xi}\log\sigma$. 
Take a holomorphic coordinate $z$ such that $\xi={\partial \over \partial z}$. In this coordinate, one has $\displaystyle{ \overline{\partial_{\xi}\sigma}\cdot \xi =\overline{{\partial \sigma\over \partial z}}{\partial \over \partial z} = {1\over 2}\left({\partial \sigma\over \partial x} + i {\partial \sigma\over \partial y}\right){\partial \over \partial z}}$, so $v_{\mathbb{R}}$ is given by ${\partial \sigma\over \partial x}{\partial \over \partial x} + {\partial \sigma\over \partial y}{\partial \over \partial y}$. 
Observe that $\displaystyle{d\sigma(v_{\mathbb{R}})=\left({\partial \sigma\over \partial x}\right)^2 + \left({\partial \sigma\over \partial y}\right)^2}$, which is strictly positive on the boundary $S(r)$ with our choice of $r$, hence $v_{\mathbb{R}}$  points outwards normal to $B(r)$. 

Since the  real vector field $v_{\Bbb R}$ has only isolated singularities on   $\overline{B}(r)$ and points outwards normal on the boundary of $B(r)$, the Poincar\'e-Hopf theorem implies
	\begin{equation}\label{ru1}
	\chi_{\sigma}(r) =\sum_{p\in B(r)} \operatorname{index}_p(v_{\Bbb R})
	= \sum_{i=1}^k \operatorname{index}_{P_i}(v_{\Bbb R}) + \sum_{p\in B(r)\backslash \{P_1, \dots, P_k\}} \operatorname{index}_p(v_{\Bbb R}). 
	\end{equation}

Now we need to calcualte the indices. We  claim that for every $p \in B(r)\backslash \{P_1, \dots, P_k\}$,  $\operatorname{index}_p(v_{\Bbb R})=-\text{ord}_p (\partial_{\xi}\log \sigma)$. 
To see this, recall that if $g$  is a holomorphic function, then $\displaystyle{\operatorname{index}_0\left(g{\partial\over \partial z}\right)_{\Bbb R}=\text{ord}_0 g}$, where $\left(g{\partial\over \partial z}\right)_{\Bbb R}= \text{Re}(g) {\partial \over \partial x} +\text{Im}(g) {\partial \over \partial y}$ is the real vector field associated to $g {\partial \over \partial z}$. Therefore, in our situation,  $\operatorname{index}_p(\partial_{\xi} \log \sigma\cdot \xi)_{\Bbb R}= 
\text{ord}_p (\partial_{\xi}\log \sigma)$. It follows that for $p \in B(r)\backslash \{P_1, \dots, P_k\}$,
 \begin{eqnarray}\label{ru2}
\text{ord}_p (\partial_{\xi}\log \sigma) &=&	\operatorname{index}_p(\partial_{\xi} \log \sigma\cdot \xi)_{\Bbb R}
	 = \operatorname{index}_p\left({1\over \sigma}\partial_{\xi}  \sigma\cdot \xi\right)_{\Bbb R}\nonumber \\
	&=& \operatorname{index}_p\left(\partial_{\xi}  \sigma\cdot \xi\right)_{\Bbb R} = - \operatorname{index}_p
	\left(\overline{\partial_{\xi}\sigma}\cdot \xi \right)_{\Bbb R} =- \operatorname{index}_p(v_{\Bbb R}).
\end{eqnarray}

This proves our claim. Consequently, for $p\in B(r)\setminus\{P_1,\dots,P_k\}$, $\mathrm{index}_p(v_{\mathbb{R}})$ is equal to the mass of the current $-dd^c[\log|\partial_\xi\log\sigma|^2]$ at $p$ by the Poincar\'e-Lelong formula.

Next we consider the first term on the right hand side of (\ref{ru1}). At each point of $P_i$, $1\leq i\leq k$,  recall our condition on $\sigma$ that $\log \sigma = k_i\log|w| + h_i(w)$ in a local coordinate $w$  centered at $P_i$. Take a better coordidnate $z$ centered at $P_i$ such that $\log \sigma  = k_i \log|z|$, and write $\xi= \psi(z) {\partial \over \partial z}$ for a holomorphic function $\psi$ without zero, then
	$$v = \overline{\partial_{\xi}\sigma}\cdot \xi = \left(\overline{\psi(z) {\partial \over \partial z}(|z|^{k_i})}\right)\psi {\partial \over \partial z}=
	\frac{k_i}{2}|\psi(z)|^2 |z|^{k_i-2}  {z} {\partial \over \partial z}.$$
Hence
\begin{equation}\label{eqn:indexpi} 
		\operatorname{index}_{P_i}(v_{\Bbb R})
	=\operatorname{index}_{0} \left(z\frac{\partial}{\partial z}\right)_{\mathbb{R}}
	= \operatorname{index}_{0} \left(x\frac{\partial}{\partial x}+y\frac{\partial}{\partial y}\right)
	=1. 
\end{equation}
Note that here we used the fact that multiplying a vector field with a real function that is continuous and has no zero in a punctured neighbourhood of the singularity of the vector field does not influence its index at the singularity (see Exercise 7.4, \cite{fulton1995}).
On the other hand, ${\partial \over \partial z} \log \sigma = \frac{\partial}{\partial z}(k_i\log|z|)= \frac{k_i}{2z}$, hence the current $dd^c [\log |\partial_{\xi}\log \sigma|^2]$ has mass $-1$ at $P_i$. Therefore, by combining (\ref{ru1}), (\ref{ru2}), (\ref{eqn:indexpi}), 
	\begin{equation}
	\begin{split}
		\chi_{\sigma}(r) &=k-\int_{B(r) \backslash \{P_1, \dots, P_k\}
		} dd^c [\log |\partial_{\xi}\log \sigma|^2]  \\
	    &= k-\left(\int_{B(r)} dd^c [\log |\partial_{\xi}\log \sigma|^2] +k\right)\\
		&=-\int_{B(r)} dd^c [\log |\partial_{\xi}\log \sigma|^2].
	\end{split}
	\end{equation}
Thus, by the Green-Jensen formula,
	\begin{eqnarray*}
		-\int_1^r \chi_{\sigma}(t) {dt\over t} &=& 
		\int_1^r {dt\over t} \int_{B(t)} dd^c [\log |\partial_{\xi}\log \sigma|^2] \\
		&=& 	\int_{S(r)}  \log |\partial_{\xi}\log \sigma|^2 d\mu_r +O(1)
		= \int_{S(r)} \log  \left|{d\sigma(\xi)\over \sigma}\right|^2d\mu_r +O(1) \\
		&=& \int_{S(r)} \log |d\sigma(\xi)|^2 d\mu_r  - \int_{S(r)} \log |\sigma|^2 d\mu_r +O(1)\\
		&=& \int_{S(r)} \log |d\sigma(\xi)|^2 d\mu_r  - 2\log r \int_{S(r)}  d\mu_r+O(1)\\
		&=& \int_{S(r)} \log |d\sigma(\xi)|^2 d\mu_r -
		2\varsigma \log r +O(1).
	\end{eqnarray*}
This proves our proposition. 
\end{proof}

We also need the following calculus lemma.
\begin{lemma}\label{lem:calculus} {\it 
		Let $H$  be a positive, strictly increasing function defined on $(0, \infty)$. 
		Then the set of $s \in (0, \infty)$ satisfying  the following inequality 
		$$H'(s) > H^{1+\delta}(s)$$
		is of finite Lebesgue measure.}
\end{lemma}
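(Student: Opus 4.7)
The plan is to apply the classical Borel-type growth comparison that underlies every $\leqexc$ statement in Nevanlinna theory. Let $E \subset (0,\infty)$ denote the exceptional set in question. Fix any $s_0>0$; by positivity of $H$, we have $H(s_0)>0$, and I would split
$$E \;=\; \bigl(E\cap (0,s_0]\bigr)\,\cup\,\bigl(E\cap[s_0,\infty)\bigr).$$
The first piece has measure at most $s_0$ trivially.

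For the second piece I would use the pointwise inequality $H'(s)/H(s)^{1+\delta}>1$ valid on $E$ to bound
$$m\bigl(E\cap[s_0,\infty)\bigr) \;\le\; \int_{s_0}^\infty \frac{H'(s)}{H(s)^{1+\delta}}\,ds.$$
The key move is to recognize the integrand as the $s$-derivative of $-1/(\delta H(s)^\delta)$, or equivalently to perform the monotone substitution $u=H(s)$. This converts the integral into $\int_{H(s_0)}^{H(\infty)} du/u^{1+\delta}$, which telescopes to at most $1/(\delta H(s_0)^\delta)$, a finite number. Summing the two estimates gives $m(E)\le s_0 + 1/(\delta H(s_0)^\delta) <\infty$, as desired.

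The one delicate point, and the main (if minor) obstacle, is that the hypothesis only assumes $H$ is strictly increasing, not absolutely continuous; thus $H'$ exists only almost everywhere and the substitution identity above should be replaced by the inequality $\int_a^b H'(s)\,g(H(s))\,ds \le \int_{H(a)}^{H(b)} g(u)\,du$, valid for $g\ge 0$ and monotone $H$. This follows from the Lebesgue decomposition of the Stieltjes measure $dH$: its singular part is non-negative and is simply discarded in passing to the Riemann integral on the right-hand side, producing the inequality in precisely the direction needed. With this standard piece of measure theory in hand the argument above goes through unchanged, and the lemma follows.
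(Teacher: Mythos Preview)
Your argument is correct and is exactly the classical Borel growth lemma proof; the paper itself states this lemma without proof, treating it as standard. Your handling of the regularity issue is also fine: since $G(t)=-1/(\delta t^{\delta})$ is $C^1$ on $(0,\infty)$, the composition $G\circ H$ is monotone increasing and satisfies $(G\circ H)'(s)=H'(s)/H(s)^{1+\delta}$ wherever $H'$ exists, so the inequality $\int_{s_0}^{b}(G\circ H)'\,ds\le (G\circ H)(b)-(G\circ H)(s_0)$ for monotone functions gives precisely the bound you need.
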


Let $\eta$ be a non-negative $(1,1)$-form on $Y$. 
Note that the relationship between $\eta$ and the pseudo-norm $h_{\eta}$ on  $T_Y$  induced from $\eta$ is given by
\begin{equation}\label{metric} 
\eta = \lambda(z)  {\sqrt{-1}\over 2\pi} dz\wedge d{\bar z}
\longleftrightarrow  h_{\eta}= 2\lambda dzd{\bar z} \longleftrightarrow \|\xi\|^2_{h_{\eta}}=2\lambda |\gamma|^2, 
~\text{if}~\xi= \gamma {\partial \over \partial z}.
\end{equation}
For simplicity, we just write $ \|\cdot\|^2_{{\eta}}$ instead of $ \|\cdot\|^2_{h_{\eta}}$.
Write
 $$T_{\eta}(r) := \int_1^r {dt\over t} \int_{B(t)} \eta.$$
When $Y = \mathbb{C}$, by applying Proposition \ref{lem:calculus} one gets the following consequence of the calculus lemma, 
\begin{equation}
\log \int_{0}^{2\pi}\lambda(re^{i\theta})\frac{d\theta}{2\pi}\leqexc (1+\delta)^2\log T_{\eta}(r)+\delta \log r+ O(1).
\end{equation}
Similarly, 
in the parabolic setting, we have the following corollary of the  calculus lemma.
\begin{lemma}\label{calculus}
Let $\eta$ be a non-negative $(1,1)$-form on $Y$. Then, for any $\delta>0$,
\begin{equation}\label{eqn:applcalculus}
\int_{S(r)} \log \|\xi\|^2_{\eta} ~d\mu_r \leqexc  (1+\delta)^2\log T_{\eta}(r)- \mathfrak{X}_{\sigma}(r) + (\delta+2\varsigma)\log r  +O(1). 
\end{equation}
\end{lemma}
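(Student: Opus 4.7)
My plan is to mirror the classical $\mathbb{C}$-argument displayed in the remark immediately before the lemma, adapted to the parabolic setting by feeding in Lemma \ref{bnew}. I first split
$$\log\|\xi\|^2_\eta = \log\frac{\|\xi\|^2_\eta}{|d\sigma(\xi)|^2} + \log|d\sigma(\xi)|^2,$$
so that, after integration against $d\mu_r$, Lemma \ref{bnew} takes care of the second summand and produces exactly the $-\mathfrak{X}_\sigma(r) + 2\varsigma\log r + O(1)$ terms on the right-hand side. All the work is therefore concentrated in bounding the $d\mu_r$-integral of the first summand.

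For that integral I apply Jensen's inequality to the concave function $\log$, using that $d\mu_r/\varsigma$ is a probability measure on $S(r)$. This converts the problem into bounding $\int_{S(r)} \|\xi\|^2_\eta/|d\sigma(\xi)|^2\,d\mu_r$ from above. Because $\xi$ is a nowhere-vanishing holomorphic vector field, one can work in a local coordinate $z$ in which $\xi = \partial/\partial z$; in such a chart $\|\xi\|^2_\eta = 2\lambda$, while the identity $d\mu_r = d^c\log\sigma|_{S(r)}$ together with the co-area formula for $F(r) := \int_{B(r)}\eta$ leads to
$$\int_{S(r)}\frac{\|\xi\|^2_\eta}{|d\sigma(\xi)|^2}\,d\mu_r = \frac{2F'(r)}{r}.$$

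To finish, I use two applications of Lemma \ref{lem:calculus}, which together yield the exponent $(1+\delta)^2$. Since $rT_\eta'(r) = F(r)$, the first application to $T_\eta$ gives $F(r) \leqexc r\,T_\eta^{1+\delta}(r)$; applying the calculus lemma once more to the (eventually strictly increasing) function $F$ itself gives $F'(r) \leqexc F^{1+\delta}(r) \leqexc r^{1+\delta}T_\eta^{(1+\delta)^2}(r)$. Taking logarithms, combining with the Jensen step, and adding the contribution from Lemma \ref{bnew} then produces the stated inequality, with all remaining bounded constants absorbed into $O(1)$. The delicate step is the co-area identity $\int_{S(r)} \|\xi\|^2_\eta/|d\sigma(\xi)|^2\,d\mu_r = 2F'(r)/r$: it requires carefully aligning the normalisations for $d^c$, for the pseudo-norm $h_\eta$, and for the pairing of $d\sigma$ with $\xi$, and is precisely the parabolic analogue of the elementary identity $F'(r) = 2r\int_0^{2\pi}\lambda\,d\theta/(2\pi)$ that underlies the $\mathbb{C}$-case remark preceding the lemma.
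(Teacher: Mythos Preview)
Your proposal is correct and follows essentially the same route as the paper: the same splitting $\log\|\xi\|^2_\eta = \log(\|\xi\|^2_\eta/|d\sigma(\xi)|^2) + \log|d\sigma(\xi)|^2$, Lemma~\ref{bnew} for the second summand, Jensen for the first, the co-area identity $\int_{S(r)}\|\xi\|^2_\eta/|d\sigma(\xi)|^2\,d\mu_r = \tfrac{2}{r}F'(r)$ established via the local computation $\tfrac{\|\xi\|^2_\eta}{|d\sigma(\xi)|^2}\,d\sigma\wedge d^c\sigma = 2\eta$, and two applications of Lemma~\ref{lem:calculus}. Your explicit normalisation $d\mu_r/\varsigma$ in the Jensen step is in fact slightly more careful than the paper, which writes the inequality without the factor $\varsigma$ and absorbs it into $O(1)$.
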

\begin{proof}
We first observe (by the Fubini's theorem) that for any smooth 1-form $\psi$ on $Y$ and any $r>1$,
$$\int_{B(r)} d\sigma\wedge \psi = \int_0^ r \left(\int_{S(t)} \psi\right) dt.$$
Taking derivative on $r$ we get
\begin{equation}\label{eqn:coareaformula}
{d\over dr} \int_{B(r)} d\sigma \wedge \psi = \int_{S(r)} \psi.
\end{equation}
Applying (\ref{eqn:coareaformula}) with $\psi={ \|\xi\|^2_{\eta}\over  |d\sigma(\xi)|^2} d^c\sigma$, we obtain, 
	$$\int_{S(r)} { \|\xi\|^2_{\eta}\over  |d\sigma(\xi)|^2} d\mu_r = {1\over r}{d\over dr} \int_{B(r)} { \|\xi\|^2_{\eta}\over  |d\sigma(\xi)|^2}  d\sigma\wedge d^c\sigma.$$
Now we claim the following identity
	$${\|\xi\|^2_{\eta} \over  |d\sigma(\xi)|^2}  d\sigma\wedge d^c\sigma = 2\eta.$$
Indeed, taking a local coordinate $z$ where $\xi = \frac{\partial}{\partial z}$, by direct computation 
$ d\sigma\wedge d^c\sigma = {\sqrt{-1}\over 2\pi} \left|{\partial \sigma\over \partial z}\right|^2 dz\wedge d\bar{z}$, and  hence 
	$${\|\xi\|^2_{\eta} \over  |d\sigma(\xi)|^2}  d\sigma\wedge d^c\sigma  = { \left\|{\partial \over \partial z}\right\|_{\eta}^2\over   \left|{\partial \sigma\over \partial z}\right|^2}  {\sqrt{-1}\over 2\pi} \left|{\partial \sigma\over \partial z}\right|^2 dz\wedge d\bar{z} = \left\|{\partial \over \partial z}\right\|_{\eta}^2 {\sqrt{-1}\over 2\pi}  dz\wedge d\bar{z}=2\eta,$$ 
which proves the claim. It follows that 
	$$\int_{S(r)} {\|\xi\|^2_{\eta}\over  |d\sigma(\xi)|^2} d\mu_r = {2\over r}{d\over dr} \int_{B(r)} \eta=  {2\over r} {d\over dr} \left( r{dT_{\eta}(r)\over dr}\right).$$
Therefore, by applying the Calculus lemma (Lemma \ref{lem:calculus}) twice, we get, for  $\delta>0$, 
	$$\int_{S(r)} {\|\xi\|^2_{\eta}\over  |d\sigma(\xi)|^2} d\mu_r\leqexc 2 r^{\delta} T^{(1+\delta)^2}_{\eta}(r).$$  
This gives
\begin{equation}\label{eqn:calc-delta}
\log  \int_{S(r)} {\|\xi\|^2_{\eta}\over  |d\sigma(\xi)|^2} d\mu_r\leqexc (1+\delta)^2 \log T_{\eta}(r) + \delta\log r + \log 2.
\end{equation}
Recall from Lemma \ref{bnew} that 
	$$- \mathfrak{X}_{\sigma}(r)= \int_{S(r)} \log |d\sigma(\xi)|^2d\mu_r - 2\varsigma\log r +O(1).$$
Therefore, we get, using  the concavity of the logarithm, 
\begin{equation}
\begin{split}
\int_{S(r)} \log \|\xi\|^2_{\eta} d\mu_r &=  \int_{S(r)} \log \left({\|\xi\|^2_{\eta} \over |d\sigma(\xi)|^2}\right)d\mu_r
+ \int_{S(r)} \log |d\sigma(\xi)|^2 d\mu_r \nonumber\\
&\leq   \log \int_{S(r)}  {\|\xi\|^2_{\eta}\over |d\sigma(\xi)|^2}d\mu_r
 - \mathfrak{X}_{\sigma}(r) + 2\varsigma\log r+O(1) \nonumber\\
&\leqexc  (1+\delta)^2\log T_{\eta}(r) - \mathfrak{X}_{\sigma}(r)+(\delta+2\varsigma)\log r+O(1).
\end{split}
\end{equation} 
This proves the lemma.
\end{proof}

\noindent{\it Proof of Theorem \ref{bb}}. 
The idea  is similar to the proof of Theorem \ref{qp2}. Let $\eta$ be a positive $(1, 1)$-form on $X$.
	Let $\Sigma: = (f^*D)_{red}$ 
	be the set theoretic 
	preimage of the divisor $D$. Let $Y^*: = Y\backslash \Sigma$. Note that $Y^*$ is hyperbolic, and denote by $\omega_{Y^*}$  the 
	 $(1, 1)$-form associated to the 
	Kobayashi metric on $Y^*$.  By the same argument in obtaining (\ref{algarg}), there exists a constant $c>0$ such that 
	$$c f^*\eta \leq \omega_{Y^*}.$$
	This implies, by taking integration, 
	\begin{equation}\label{original}
	c T_{f, \eta}(r) \leq  T_{\omega_{Y^*}}(r):=	\int_1^r {dt\over t} \int_{B(t)}  \omega_{Y^*}.\end{equation}
	We now analyze $\omega_{Y^*}$. Using the fact that $Y^*$ is hyperbolic, we know that the universal cover of $Y^*$ is given by the unit disk $\bigtriangleup$, and 
	the standard (normalized) Poincar\'e metric on unit disk $\bigtriangleup$ descends to the $(1, 1)$-form $\omega_{Y^*}$. 
	Its Gaussian curvature is $K = -1$.
	
	We make the following claims.
	
	(a) Both of the currents $[\operatorname{Ric}\omega_{Y^*}]$ and $\mbox{Ric}[\omega_{Y^*}]$ are well-defined on $Y$.
	
	(b) 	$[\operatorname{Ric}\omega_{Y^*}] \leq  [\Sigma] + \mbox{Ric}[\omega_{Y^*}]$ holds on $Y$.
	
	The statement is local, so we restrict ourselves to a contractible neighborhood $U$ of a point $p\in Y$ (note that $p$ is an isolated point). Let  $z$ be a local coordinate on $U$ centered at $y$, and we assume that $U^*:=U\backslash \{p\}= \bigtriangleup^*$. 
	 	Write  $\omega_{Y^*}=a(z)  \sqrt{-1}dz\wedge d\bar{z}$.	By the distance decreasing property of the Kobayashi metric, we have, for some constant $\delta>0$,	$$a(z) \leq {1\over |z|^2\log^2 (|z|^2\delta)}.$$
	 This implies that $a$ is locally integrable so $\mbox{Ric}[\omega_{Y^*}]$ is well-defined on $Y$.
	Let $\psi(z)=a(z)|z|^2$, then $\log \psi$ is subharmonic in $U^*$ due to the curvature property of $\omega_{Y^*}$,  and is  bounded above due to the inequality above. By Lemma \ref{blemma},  $\psi(z)$ extends to a subharmonic function on $U$ and  
	the $(1, 1)$-form $dd^c\log \psi$ is locally integrable on $U$. This shows that the current $[\operatorname{Ric}\omega_{Y^*}]$ is well-defined on $Y$	since $dd^c\log \psi =dd^c \log a$ outside $p$.  
	Moreover, from Lemma \ref{blemma}, $[dd^c\log \psi]\leq  dd^c[\log \psi]$. Hence, 
		$$[\operatorname{Ric}\omega_{Y^*}]=[dd^c \log \psi]\leq dd^c[\log \psi]=dd^c[\log |z|^2]+ dd^c[\log a]= [\Sigma] + \mbox{Ric}[\omega_{Y^*}].$$
This proves the claim.
	On the other hand, using the condition that the Gauss curvature is $-1$, we have $\omega_{Y^*}=\operatorname{Ric} \omega_{Y^*}$ (in terms of the differential forms). 
	Hence by the claim we get 
	\begin{eqnarray*}
		T_{\omega_{Y^*}}(r) &=& \int_1^r {dt\over t} \int_{B(t)} \omega_{Y^*}
	=	\int_1^r {dt\over t} \int_{B(t)} \operatorname{Ric}\omega_{Y^*} \\
		&\leq&  \int_1^r {dt\over t} \int_{B(t)} [\operatorname{Ric}\omega_{Y^*}] \\
		&\leq&  \overline{N}_f(r, D) + \int_1^r {dt\over t} \int_{B(t)} \mbox{Ric}[\omega_{Y^*}].\end{eqnarray*}
	It remains to estimate the last term. As we have noted above,  $\mbox{Ric}[\omega_{Y^*}]$, regarded as a current on $Y$, is of order zero. Hence the Green-Jensen formula (see Lemma \ref{GJ}) can be applied. 
	 Using the fact that $\operatorname{Ric} [\omega_{Y^*}] =dd^c \log [\|\xi\|^2_{\omega_{Y^*}}]$, 
	 by the Green-Jensen formula (see Lemma \ref{GJ}) and by (\ref{eqn:applcalculus}) we get
	\begin{equation}
	\begin{split}
	\int_1^r {dt\over t} \int_{B(t)} \operatorname{Ric} [\omega_{Y^*}] 
	&= \int_{S(r)} \log \|\xi\|^2_{\omega_{Y^*}}~ d\mu_r +O(1) \\
	&\leqexc    (1+\delta)^2\log  T_{\omega_{Y^*}}(r) -   \mathfrak{X}_{\sigma}(r)+ (\delta+2\varsigma)\log r + O(1).
	\end{split}
	\end{equation}
	Thus 
	$$ T_{\omega_{Y^*}}(r) \leqexc  \overline{N}_f(r, D)+   (1+\delta)^2\log  T_{\omega_{Y^*}}(r) - \mathfrak{X}_{\sigma}(r) + (\delta+2\varsigma)\log r + O(1),$$
	which gives 
	$${1\over 2} T_{\omega_{Y^*}}(r) \leqexc\overline{N}_f(r, D) -   \mathfrak{X}_{\sigma}(r)+ (\delta+2\varsigma)\log r +O(1).$$
	Combing the above with (\ref{original}), we get, for some positive constant $c>0$, 
	$$c T_{f, \eta}(r) \leqexc \overline{N}_f(r, D) -  \mathfrak{X}_{\sigma}(r)+ (\delta+2\varsigma)\log r  +O(1).$$
	This finishes the proof.                $\qed$

\section{Examples of  the Nevanlinna pair $(X, D)$}
We provide examples of  the Nevanlinna pair $(X, D)$ such that
	 $X\setminus D$ may not be  hyperbolically imbedded in $X$. In each case,
	  we obtain a more precise Second Main Theorem 
	type result. To simplify the notation, for a meromorphic function $g$ on $Y$, we use $g'$ to denote the directional derivative $dg(\xi)$. 
	
\subsection{Parabolic version of the logarithmic derivative lemma}
	 The main tool is the logarithmic derivative lemma on parabolic Riemann surfaces due to Paun-Sibony \cite{paun2014value}, slightly modified to serve our purpose. The only change is replacing Proposition 3.3 in their original paper by our Lemma \ref{bnew}, so we omit the detail. 
		
		\begin{proposition}[Compare with \cite{paun2014value}, Theorem 3.7]\label{prop:ldlpaun} Let $f$ be a non-constant meromorphic function on a parabolic Riemann surface $Y$. Then, for $\delta>0$, 
			$$m_{f'/f}(r,\infty)\leqexc \frac{1}{2}
			\left(
			(1+\delta)^2\log^+ T_{f}(r) - \mathfrak{X}_{\sigma}(r)+(\delta+2\varsigma)\log r 
		+ \mathfrak{E}_{\sigma}(r)\right) +O(1),$$
		and
		\begin{eqnarray*}
			m_{\frac{f^{(k)}}{f}}(r,\infty)&\leqexc&  \frac{k}{2}
		\left(
		(1+\delta)^2\log^+ T_{f}(r)- \mathfrak{X}_{\sigma}(r) +(\delta+2\varsigma)\log r 
		+ \mathfrak{E}_{\sigma}(r)\right)\\
		&~&+O(\log^+\log^+T_f(r)+\log\log r).
		\end{eqnarray*}
		\end{proposition}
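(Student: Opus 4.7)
The plan is to follow the proof of Theorem 3.7 in Paun-Sibony \cite{paun2014value} essentially verbatim, substituting our refined Lemma \ref{bnew} for their Proposition 3.3; this substitution is what converts an unspecified $O(\log r)$ error into the explicit $(\delta+2\varsigma)\log r$ appearing in the statement. The overall strategy is the classical Nevanlinna strategy for the logarithmic derivative lemma: compare $|f'/f|^2$ with $\|\xi\|^2_{\eta}$ for a judiciously chosen pseudo-metric $\eta$ on $Y$ pulled back from $\mathbb{P}^1$, then invoke the calculus lemma (Lemma \ref{calculus}), which has already been arranged so that Lemma \ref{bnew} is built into it.

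For the first-order estimate I would view $f$ as a holomorphic map $f\colon Y\to \mathbb{P}^1$ and introduce the singular pseudo-metric $\omega_0=\frac{i\,d\zeta\wedge d\bar\zeta}{2\pi|\zeta|^2(1+\log^2|\zeta|^2)}$ on $\mathbb{P}^1\setminus\{0,\infty\}$, so that in any coordinate where $\xi=\partial/\partial z$ one has $\|\xi\|^2_{f^*\omega_0}=|f'|^2/\bigl[|f|^2(1+\log^2|f|^2)\bigr]$. Splitting
$$\log^+\frac{|f'|^2}{|f|^2}\le \log^+\|\xi\|^2_{f^*\omega_0}+\log^+(1+\log^2|f|^2),$$
integrating against $d\mu_r$, and applying concavity of the logarithm reduces the bound to controlling $\log \int_{S(r)}\|\xi\|^2_{f^*\omega_0}\,d\mu_r$ and $\int_{S(r)}\log^+(1+\log^2|f|^2)\,d\mu_r$. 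The latter is $O(\log^+T_f(r))$ by the First Main Theorem (\ref{eqn:np-fmt}) and Jensen-type concavity; the former is the content of Lemma \ref{calculus}, together with the comparison $T_{f^*\omega_0}(r)\le T_f(r)+O(1)$ (the logarithmic poles of $\omega_0$ at $0$ and $\infty$ contribute only counting terms that are already bounded by the Nevanlinna characteristic). The role of $\mathfrak{E}_\sigma(r)$ in the final inequality is to absorb the passage from $\log \|\xi\|^2_\eta$, which Lemma \ref{calculus} directly bounds, back to $\log^+\|\xi\|^2_\eta$: the negative part $\int_{S(r)}\log^-\|\xi\|^2_\eta\,d\mu_r$ is dominated, via the identity $\|\xi\|^2_\eta\,d\sigma\wedge d^c\sigma=2|d\sigma(\xi)|^2\eta$ used in the proof of Lemma \ref{calculus}, by $\int_{S(r)}\log^-|d\sigma(\xi)|^2\,d\mu_r=\mathfrak{E}_\sigma(r)$ plus controllable pieces.

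For the higher-order inequality I would use the telescoping identity $f^{(k)}/f=\prod_{j=1}^{k}f^{(j)}/f^{(j-1)}$, apply the first-order estimate to each factor, and control $T_{f^{(j)}}(r)$ in terms of $T_f(r)$ by iterating the first-order bound; this yields both the factor $k/2$ in the main term and the additional $O(\log^+\log^+T_f(r)+\log\log r)$ correction from the nested applications of the calculus lemma.

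The main obstacle, and the reason for going through the argument rather than quoting it, is the bookkeeping: one must track the exact coefficient $2\varsigma$ in front of $\log r$ through every step (in particular through each use of Lemma \ref{calculus} and Lemma \ref{bnew}), and verify that every incidental error introduced when comparing $f^*\omega_0$ with the Fubini-Study pullback $f^*\omega_{FS}$, or when passing between $\log$ and $\log^+$, can be packaged cleanly into the terms $-\mathfrak{X}_\sigma(r)$, $(\delta+2\varsigma)\log r$, $\mathfrak{E}_\sigma(r)$, and $O(1)$, without any residual characteristic-function contribution being left on the right-hand side. Once this bookkeeping is performed, the argument is a direct adaptation of Paun-Sibony with Lemma \ref{bnew} inserted at the one relevant juncture, which is why we omit the detailed computation.
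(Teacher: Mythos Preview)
Your proposal is correct and matches the paper's approach exactly: the paper's own justification is simply that the result follows from P\u{a}un--Sibony's Theorem 3.7 with their Proposition 3.3 replaced by Lemma \ref{bnew}, and the details are omitted. Your sketch fills in precisely those omitted details (the singular metric on $\mathbb{P}^1$, the application of Lemma \ref{calculus}, the role of $\mathfrak{E}_\sigma(r)$ in the $\log/\log^+$ passage, and the telescoping for higher derivatives), so there is nothing to compare or correct.
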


\subsection{Parabolic version of the logarithmic derivative lemma for jet differentials}

The parabolic version of the logarithmic derivative lemma can be extended to  jet differentials. 
We first recall some notions. Let $X$ be an $n$-dimensional complex manifold. 
We denote by $J_k X:=\bigcup_{x\in X} J_k(X)_x$ the (fiber) bundle of $k$-jets, where $J_k(X)_x$ consists of equivalence classes of germs of  holomorphic curves $f:(\mathbb C,0)\to (X,x)$ with the equivalence relation $f \sim_k g$ if and only if all derivatives $f^{(j)}(0)=g^{(j)}(0)$ coincide for $j=0,\dots,k$. 
The equivalent class of a holomorphic germ $f:(\mathbb C,0)\to (X,x)$ is called the $k$-jet of $f$, denote by $j_kf$.
Note that  $J_k(X)_x$ is isomorphic to ${\mathbb C}^{kn}$ via the identification $f\mapsto (f'(0), \dots, f^{(k)}(0))$.
Observe also that there is a natural ${\mathbb C}^*$-action on the fibers of $J_k X$ given by
$$\lambda\cdot j_kf: =j_k(t\mapsto f(\lambda t)), ~~\forall \lambda\in {\mathbb C}^*.$$
For an open subset $U \subset X$, {\it a jet differential of order $k$ on $U$} is an element $P\in {\mathcal O}(p_k^{-1}(U))$, where $p_k: J_kX\rightarrow X$ is the projection map.
The sheaf of jet differentials is defined to be ${\mathcal E}^{GG}_{k, \bullet}\Omega_X:=(p_k)_*{\mathcal O}_{J_kX}.$ A $k$-jet differential $P \in {\mathcal E}^{GG}_{k, \bullet}\Omega_X(U)={\mathcal O}(p_k^{-1}(U))$ is said to be of weighted degree $m$ if for any $j_kf\in p_k^{-1}(U)$ one has
$$P(\lambda\cdot j_kf)=\lambda^m P(j_kf), ~~\forall \lambda\in {\mathbb C}^*.$$
The {\it Green-Griffiths sheaf} $ {\mathcal E}^{GG}_{k, m}\Omega_X$ is defined to be the subsheaf of ${\mathcal E}^{GG}_{k, \bullet}\Omega_X$
of order $k$ and weighted degree $m$. 
In local coordinates, any element $P\in {\mathcal E}^{GG}_{k, m}\Omega_X (U)$ can be written as
$$P(z, dz, \dots, d^kz)=\sum_{|\alpha|=m} c_{\alpha}(z) (dz)^{\alpha_1}\cdots (d^kz)^{\alpha_k},$$
where $c_{\alpha}\in \mathcal {O}(U)$  for any $\alpha:= (\alpha_1,\dots,\alpha_k) \in ({\mathbb N}^n)^k$, and  where  $|\alpha| := |\alpha_1|+ 2|\alpha_2|+\cdots+k|\alpha_k|$ is the usual multi-index notation for the weighted degree.
The sheaf ${\mathcal E}^{GG}_{k, m}\Omega_X$ is locally free, and we  denote its associated vector bundle by $E^{GG}_{k, m}\Omega_X$.

We briefly recall the construction of Demaily-Semple jet tower as follows, we refer the reader \cite{rosseauimpa} or the original paper of Demailly \cite{Dem} for details. Let ${\mathbb G}_k$ be the group of germs of $k$-jets of biholomorphisms of
$({\mathbb C}, 0)$, that is, the group of germs of biholomorphic maps
$\phi: t \mapsto a_1 t + a_2 t^2+\cdots+a_kt^k$, $a_1\in {\mathbb C}^*$, $a_j\in {\mathbb C}$ for $j>1$,
in which the composition law is taken modulo terms $t^j$
of degree $j>k$. The group $\mathbb{G}_k$ has a natural action on $J_k(X)$ given by $\phi\cdot j_k(f):=j_k(f\circ \phi)$ for any $\phi \in \mathbb{G}_k$.
Denote by
$$J^{\text{reg}}_k X:=\{j_kf\in J_k X~|~f'(0)\not=0\},$$
the space of non-constant jets.
 Consider the pairs $(X', V')$, where $X'$ is a complex manifold, $V' \subset T_{X'}$ is a subbundle, and denote by $\pi':V'\to X'$ the natural projection. Starting from $(X,T_X)$, define $X_1 := {\mathbb P}(T_X)$, let $\pi_{0,1}:X_1\to X$ the natural projetion.  Define the bundle $V_1 \subset T_{X_1}$  fiberwise by
 $$
 V_{1,(x,[v])} := \{w \in   T_{X_1}(x,[v]) : (d\pi_{0,1})_x(w)\in {\mathbb C}v\}.$$
In other words,  $V_1$ is characterized by the exact sequence
 $$0\rightarrow T_{X_1/X}\rightarrow V_1\xrightarrow{d\pi_{0,1}} {\mathcal O}_{X_1}(-1)\rightarrow 0$$
 where ${\mathcal O}_{X_1}(-1)$  is the tautological bundle on $X_1$, and $T_{X_1/X}$  is the
 relative tangent bundle corresponding to the fibration $\pi_{0, 1}$.
Inductively by this procedure,  
we get the {\it Demailly-Semple tower}
$$
\xymatrixcolsep{2pc}\xymatrix{
	  (X_k,V_k)\ar[r]^-{\pi_{k-1,k}} &(X_{k-1},V_{k-1})\ar[r]^-{\pi_{k-2,k-1}}&\cdots \rightarrow (X_1,V_1)\ar[r]^{\pi_{0,1}}&(X,T_X).
	  }
$$
Then we have an embedding $J^{reg}_k X/{\mathbb G}_k \rightarrow X_k$.
Denote by $X_k^{\text{reg}}$ the image of this embedding in $X_k$ 
and denote by $X_k^{\text{sing}}:=X_k\backslash X_k^{\text{reg}}$. Then $X_k^{\text{sing}}$ is a divisor in $X_k$. 
Denote by  $\pi_k: X_k \rightarrow  X$ the projection, then the direct
image sheaf $\pi_{k*}{\mathcal O}_{X_k}(m)$ is isomorphic to $\pi_{k*}{\mathcal O}_{X_k}(m)\simeq{\mathcal E}_{k,m}\Omega_X$,  whose
sections are precisely the invariant jet differentials $P$, i.e., for any $g\in \mathbb{G}_k$ and any $j_k f\in J^{reg}_k (X)$, 
$P(j_k(f\circ g))=g'(0)^m P(j_k f).$ We shall denote the associated
vector bundle by $E_{k, m}\Omega_X$.  The fiber of $\pi_k$ at a non-singular point of $X$ is denoted by ${\mathcal R}_{n,k}$, 
it is a rational manifold, and it is a compactification of the quotient
$({\mathbb C}^{nk}-\{0\})/{\mathbb G}_k$.

 The above definition can be also extended to the logarithmic setting. 
Let $D$ be a simple normal crossing divisor on $X$ (i.e. $D=D_1+\cdots+D_c$ where $D_1,\dots,D_c$ are smooth irreducible divisors intersecting transversely).
The logarithmic cotangent sheaf, denoted by $\Omega_X(\log D)$, 
 is a locally-free sheaf generated by, on $U$,  $${dz_1\over z_1}, \dots, {dz_{s}\over z_{s}}, dz_{s+1}, \dots, dz_n,$$
 where $U\subset X$ is an open subset with local coordinates $(z_1,...,z_n)$ such that
$D|_U = (z_1\cdots  z_{s} = 0)$.
 Let ${\mathcal J}_k(X,-\log D)$ (called the {\it logarithmic $k$-jet sheaf})  be the sheaf
of germs of local holomorphic sections $\alpha$ of $J_kX$ such that,  for any $\omega\in \Omega_X(\log D)_x$,
$(d^{j-1}\omega)(\alpha)$ are all holomorphic for $j = 1, \dots,k$.
A local meromorphic $k$-jet differential $\omega$ on $U$ is called {\it a logarithmic $k$-jet differential} if $\omega(\alpha)$ is holomorphic for any logarithmic $k$-jet field $\alpha\in  {\mathcal J}_k(X,-\log D)(U)$. The sheaf
of logarithmic $k$-jet differential is denoted by ${\mathcal E}^{GG}_{k,\bullet}\Omega_X(\log D)$. The logarithmic Green-Griffiths sheaf ${\mathcal E}_{k,m}^{GG}\Omega_X(\log D)$ is the
 subsheaf of ${\mathcal E}^{GG}_{k, \bullet}\Omega_X(\log D)$ with
weighted degree $m$. In  local coordinates
$z_1, \dots, z_n$ on $U$ with $D|_U=(z_1\cdots z_s=0)$, any element 
$P\in {\mathcal E}_{k,m}^{GG}\Omega_X(\log D)(U)$ can be written as  
\begin{eqnarray}\label{jdiff}
P(z,dz_1,\dots,dz_n)&=&\sum_{|\alpha|=m} c_{\alpha} \left({dz_1\over z_1}\right)^{\alpha_{1,1}} \cdots \left({d^kz_1\over z_1}\right)^{\alpha_{1,k}}
\cdots \left({dz_s\over z_s}\right)^{\alpha_{s,1}} \cdots \left({d^kz_s\over z_s}\right)^{\alpha_{s,k}}\nonumber\\
&~&\cdot (dz_{s+1})^{\alpha_{s+1, 1}}\cdots (d^kz_{s+1})^{\alpha_{s+1, k}}\cdots (dz_{n})^{\alpha_{n, 1}}\cdots (d^kz_n)^{\alpha_{n, k}},
\end{eqnarray}
where each $c_{\alpha}\in {\mathcal O}(U)$,  the summation is over the $kn$-tuples
$\alpha:= (\alpha_1,...,\alpha_k) \in ({\mathbb N}^n)^k$, and where we used the usual
multi-index notation for the weighted degree $|\alpha| := |\alpha_1|+ 2|\alpha_2|+ \cdots +k|\alpha_k|$. 
 The associated vector bundle is denoted by  $E^{GG}_{k,m}\Omega_X(\log D)$.

We now briefly recall the construction of the logarithmic version of Demaily-Semple tower due to Dethloff-Lu \cite{DL01}. A
 {\it logarithmic directed manifold} is a triple $(X, D, V)$
where $(X,D)$ is a log-manifold, $V$ is a subbundle of $T_X(-\log D)$. 
A morphism between logarithmic directed manifolds $(X',D',V')$ and $(X,D,V)$ is given by a holomorphic map $f:X'\to X$ such that $f^{-1}D\subset D'$ and $f_*V'\subset V$.  
 The  logarithmic Demailly-Semple $k$-jet tower
 \begin{eqnarray*}
 (X_k(D), D_k, V_k) &\xrightarrow{\pi_{k-1,k}}& (X_{k-1}(D), D_{k-1}, V_{k-1}) \xrightarrow{\pi_{k-2,k-1}}
 \cdots \\
 &\rightarrow&  (X_1(D), D_1, V_1) \xrightarrow{\pi_{0,1}} (X,D, T_X(-\log D))
 \end{eqnarray*}
 is constructed  inductively as follows:  Starting from
 $V_0=V=T_X(-\log D)$, define
$X_k(D):={\mathbb P}(V_{k-1})$, and let $\pi_{k-1, k}: X_k(D)\rightarrow X_{k-1}(D)$
be the natural projection. Set $D_k:=(\pi_{k-1, k})^{-1}(D_{k-1})$
which is a simple
normal crossing divisor. Note that $\pi_{k-1, k}$  induces a morphism
$$(\pi_{k-1, k})_*: T_{X_k(D)}(-\log D_k)\rightarrow (\pi_{k-1, k})^*T_{X_{k-1}(D)}(-\log D_{k-1}).$$
Define $V_k: =(\pi_{k-1, k})_*^{-1}
{\mathcal O}_{X_k(D)}(-1)\subset T_{X_k(D)}(-\log D_k)$, where 
${\mathcal O}_{X_k(D)}(-1): = {\mathcal O}_{{\mathbb P}(V_{k-1})}(-1)$
is the tautological line bundle, which by definition is also a subbundle of
$\pi_{k-1,k}^*V_{k-1}$. 

By Proposition 3.9 in \cite{DL01}, we have
$${\mathcal E}_{k,m}\Omega_X(\log D) =(\pi_k)_*{\mathcal O}_{X_k(D)}(m),$$ 
where  ${\mathcal E}_{k,m}\Omega_X(\log D)$  is the subsheaf of 
${\mathcal E}^{GG}_{k,m}\Omega_X(\log D)$ consisting of
invariant logarithmic differential operator $P$.

Let $f:  Y \rightarrow  X$  be a holomorphic map.
 Let $P\in H^0(X, E_{k, m}^{GG} \Omega_X(\log D))$. Write 
$f^*P: = P(j_kf)$. 
The parabolic version of the logarithmic derivative lemma extends to the following jet differentials (see also \cite{xie},  theorem 3.1).

\begin{theorem}\label{log}  Let $X$ be a complex projective variety  and $D$ be a simple normal crossing divisor on $X$ (possibly empty). 
	 
	 $(a)$ Let $P\in H^0(X, E_{k, m}^{GG} \Omega_X(\log D))$. Then there exists a constant $C>0$ such that for any parabolic Riemann surface $Y$, 
	every holomorphic map 
	$f: Y\rightarrow X$ with $f(Y)\not\subset D$  and with $f^*P\not\equiv 0$,  for $\delta > 0,$ one has 
	$$\int_{S(r)} \log^+| P(j_kf)| d\mu_r \leqexc  C(\log^+ T_{f, E}(r)- \mathfrak{X}_{\sigma}(r) + (\delta+2\varsigma)\log r + \mathfrak{E}_\sigma(r)) +O(1),$$
where $E$ is an (thus any)  ample divisor on $X$.
	
	$(b)$ Let $A$ be an ample line bundle on $X$. For any positive integer $N$, let $P\in 
	H^0(Y, E_{k, m}^{GG}\Omega_X(\log D)\otimes A^{-N})$. Then exists a constant $C>0$ such that 
	 for every holomorphic map $f: Y\rightarrow X$ with $f(Y)\not\subset D$ 
where $Y$ is a parabolic Riemann surface, if 
	$f^*P\not\equiv 0$,  then for $\delta > 0,$
	$$T_{f, A}(r)\leqexc {m\over N} \overline{N}_f(r, D) + 	C(\log^+ T_{f,A}(r) - \mathfrak{X}_{\sigma}(r)+ (\delta+2\varsigma)\log r + \mathfrak{E}_\sigma(r))  +O(1).$$
\end{theorem}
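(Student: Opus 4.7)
The plan is to prove the two parts in sequence, with part $(b)$ reducing to part $(a)$ via a Poincar\'e--Lelong computation. For part $(a)$, I would cover $X$ by finitely many coordinate charts $U_\lambda$ with local coordinates $z_{\lambda,1},\ldots,z_{\lambda,n}$ adapted to $D$, so that $D|_{U_\lambda}=\{z_{\lambda,1}\cdots z_{\lambda,s(\lambda)}=0\}$, and choose a relatively compact refinement $\{V_\lambda\}$. After translating coordinates by suitable constants (which does not change any derivatives), arrange that each $z_{\lambda,i}$ with $i>s(\lambda)$ is bounded away from $0$ on $V_\lambda$, and that every $z_{\lambda,i}$ is a rational function on $X$ (as in the proof of Theorem \ref{thm:deng-zuo}). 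Writing $P$ on $U_\lambda$ in the form (\ref{jdiff}) with coefficients bounded on $\overline{V_\lambda}$, and using that $|f_{\lambda,i}|$ (where $f_{\lambda,i}:=z_{\lambda,i}\circ f$) is bounded above on $f^{-1}(V_\lambda)$ and, for $i>s(\lambda)$, also bounded below, one sees that every monomial in $P(j_kf)$ is, up to a multiplicative constant, a product $\prod_{i,j}|f_{\lambda,i}^{(j)}/f_{\lambda,i}|^{\alpha_{i,j}}$. Taking $\log^+$, integrating over $S(r)$, and invoking the parabolic logarithmic derivative lemma (Proposition \ref{prop:ldlpaun}) for each $f_{\lambda,i}$ together with $T_{f_{\lambda,i}}(r)\le T_{f,E}(r)+O(1)$ yields the bound in $(a)$.

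For part $(b)$, since $A$ is ample, after replacing $N$ by a larger multiple (the stated inequality is preserved) we may assume $A^N$ is globally generated. Fix a hermitian metric $h_A$ on $A$ and pick $s_0,\ldots,s_M\in H^0(X,A^N)$ with $\max_i\|s_i\|_{h_A^N}\ge c>0$ on $X$. Each $s_i\otimes P$ lies in $H^0(X,E^{GG}_{k,m}\Omega_X(\log D))$, and a local computation in any trivializing frame $\xi$ of $A$ gives the identity
\begin{equation*}
|(s_iP)(j_k f)|=\|s_i(f)\|_{h_A^N}\cdot\|P(j_k f)\|_{h_A^{-N}}.
\end{equation*}
Applying part $(a)$ to each $s_i\otimes P$ and combining with the generation estimate, one obtains
\begin{equation*}
\int_{S(r)}\log^+\|P(j_k f)\|_{h_A^{-N}}\,d\mu_r\leqexc C_1\bigl(\log^+ T_{f,A}(r)-\mathfrak{X}_\sigma(r)+(\delta+2\varsigma)\log r+\mathfrak{E}_\sigma(r)\bigr)+O(1).
\end{equation*}

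To convert this into a bound on $T_{f,A}(r)$, view $P(j_kf)$ as a meromorphic section of the line bundle $f^*A^{-N}$, apply Poincar\'e--Lelong to $\log\|P(j_kf)\|^2_{h_A^{-N}}$, and integrate using the Green--Jensen formula (Lemma \ref{GJ}) to get
\begin{equation*}
\int_{S(r)}\log\|P(j_k f)\|^2_{h_A^{-N}}\,d\mu_r=N_0(r)-N_\infty(r)+N\,T_{f,A}(r)+O(1),
\end{equation*}
where $N_0,N_\infty$ are the zero and pole counting functions of $P(j_kf)$. Dropping the nonnegative term $N_0(r)$ and using $\log\le\log^+$ yields $NT_{f,A}(r)\le 2\int_{S(r)}\log^+\|P(j_k f)\|\,d\mu_r+N_\infty(r)+O(1)$. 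A local inspection of (\ref{jdiff}) shows that at each $y\in f^{-1}(D)$ every monomial evaluated on $j_kf$ has pole order at most
\begin{equation*}
\sum_{i,j}\alpha_{i,j}\min\bigl(\mathrm{ord}_y f_{\lambda,i},j\bigr)\le\sum_{i,j}\alpha_{i,j}\,j=m,
\end{equation*}
so $N_\infty(r)\le m\,\overline{N}_f(r,D)$; combining and dividing by $N$ yields the claimed inequality. The principal technical obstacle is precisely this uniform-in-$y$ bound on the pole order of $P(j_kf)$ by the weighted degree $m$, which is what produces the key coefficient $m/N$; a subsidiary but delicate point is the coordinate adjustment in part $(a)$ that reduces the non-logarithmic derivatives $f_{\lambda,i}^{(j)}$ to logarithmic ones, without which the residual terms $m(r,f_{\lambda,i})$ would only be controlled by $T_{f,A}(r)$ itself rather than by $\log^+T_{f,A}(r)$, destroying the error scale demanded by the theorem.
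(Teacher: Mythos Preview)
Your proof follows the same overall strategy as the paper: for (a), cover $X$ by coordinate charts adapted to $D$, bound $\log^+|P(j_kf)|$ pointwise by sums of $\log^+|f_{\lambda,i}^{(l)}/f_{\lambda,i}|$, and invoke the parabolic logarithmic derivative lemma together with $T_{f_{\lambda,i}}(r)\le T_{f,E}(r)+O(1)$; for (b), apply Poincar\'e--Lelong and Green--Jensen to $\log\|P(j_kf)\|^2$, bound the pole divisor by $m(f^*D)^{[1]}$ via the weighted-degree count, and control the remaining boundary integral through part (a). Your translation trick in (a) is a harmless variant of the paper's direct estimate $|f_{\lambda,t}^{(l)}|\le \tilde C_\lambda\,|f_{\lambda,t}^{(l)}/f_{\lambda,t}|$ on $f^{-1}(V_\lambda)$. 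In (b) the paper simply compares $\|P(j_kf)\|_h$ with its local scalar representative on each $V_\lambda$ (the metric being bounded there), whereas you tensor with global sections $s_i$ of $A^N$; both routes land in part (a).

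One point, however, is not correct as written: you cannot ``replace $N$ by a larger multiple'' while keeping $P$, since $P$ lies in $H^0(X,E^{GG}_{k,m}\Omega_X(\log D)\otimes A^{-N})$ for that specific $N$ only. What does work is to replace $(P,m,N)$ by $(P^{k_0},k_0m,k_0N)$ for $k_0$ large enough that $A^{k_0N}$ is globally generated: the ratio $m/N$ is preserved, $f^*(P^{k_0})=(f^*P)^{k_0}\not\equiv 0$, and your global-section argument then applies. Alternatively, you can avoid the reduction entirely by following the paper and bounding $\|P(j_kf)\|_h$ via local trivializations of $A^{-N}$, which requires no global generation hypothesis.
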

\begin{proof} Same as in the proof of Theorem \ref{thm:deng-zuo}, we can cover $X$ by finitely many open subsets $\{U_{\lambda}\}_{\lambda\in \Lambda}$ with coordinates $(z_{\lambda, 1}, \dots, z_{\lambda, n})$ on $U_{\lambda}$ such that 
	$D|_{U_{\lambda}}=(z_{\lambda,1}\cdots z_{\lambda,s(\lambda)}=0)$. Note that 
 $z_{\lambda, 1}, \dots, z_{\lambda, n}$	are (global)  rational functions on $X$.
	Take  a relative compact subcovering $\{V_{\lambda}\}_{\lambda\in \Lambda}$ with $\overline{V_{\lambda}}\subset U_{\lambda}$ (note that all closure are taking with respect to the complex topology).
	Let $f_{\lambda, i}=z_{\lambda, i}\circ f$. From (\ref{jdiff}), there exists a constant $C_{\lambda}>0$ such that for $z\in Y$ with $f(z)\in V_{\lambda}$, 
\begin{equation}
	\begin{split}
	  \log^+|P(j_k f)(z)| 
	    &\leq C_{\lambda}\sum_{l=1}^k \Bigg(\sum_{i=1}^{s(\lambda)} \log^+\left|{f_{\lambda, i}^{(l)}\over f_{\lambda, i}}(z)\right| +\sum_{t=s(\lambda)+1}^{n} \log^+|f_{\lambda,t}(z)| \Bigg)\\
	    &\leq C_{\lambda} \sum_{l=1}^k \Bigg(\sum_{i=1}^{s(\lambda)} \log^+\left|\frac{f_{\lambda,i}^{(l)}}{f_{\lambda,i}}(z)\right| +\sum_{t=s(\lambda)+1}^n\log^+\left|\frac{f_{\lambda,t}^{(l)}}{f_{\lambda,t}}(z)\right|
	     +\log^+\tilde{C_{\lambda}}\Bigg), 
	\end{split}
\end{equation}
	where  $\tilde{C}_{\lambda}=\max_{s(\lambda) \leq t\leq n}\sup_{x\in \overline{V_\lambda} }z_{\lambda,t}(x)$.
Since $\Lambda$ is finite, 
	 there exist positive constants $C_1,C_2$ such that, on  $Y$,
	\begin{equation}\label{eqn:estimatejet}
	\log^+|P(j_k f)|\leq C_1 \sum_{\lambda\in \Lambda}\sum_{l=1}^k
			\sum_{i=1}^{n }\log^+\left|  \frac{f_{\lambda,i}^{(l)}}{f_{\lambda,i}}  \right| + C_2.
	\end{equation}
For $\delta>0$, the logarithmic derivative lemma (Proposition \ref{prop:ldlpaun}) implies, for each $\lambda\in \Lambda$, $1\leq i\leq n$, there are constants $C_3,C_4 > 0$ such that
	\begin{equation}\label{eqn:loga}
	\begin{split}
	&~\int_{S(r)}\log^+\left|\frac{f_{\lambda,i}^{(l)}}{f_{\lambda,i}}\right|d\mu_r
		\leqexc C_3(\log^+T_{z_{\lambda, i} \circ f}(r)- \mathfrak{X}_{\sigma}(r)+ (\delta+2\varsigma)\log r + \mathfrak{E}_\sigma(r)) +O(1)\\
		&\leqexc C_4(\log^+T_{f,E}(r)-\mathfrak{X}_{\sigma}(r)+ (\delta+2\varsigma)\log r+ \mathfrak{E}_\sigma(r)) +O(1),
	\end{split}
	\end{equation}
	where the last inequality follows from the fact that $\log T_{g\circ f}(r)\leq C(\log T_{f,E}(r))$ for any rational function $g$ on $X$ (see \cite{noguchi2013nevanlinna}, Theorem 2.13). 
	By integrating (\ref{eqn:estimatejet}) on $S(r)$ and applying (\ref{eqn:loga}) we get, for some $C>0$, 
	\begin{equation}
	\int_{S(r)}\log^+|P(j_k f)|d\mu_r \leqexc C(\log^+T_{f,E}(r)- \mathfrak{X}_{\sigma}(r)+ (\delta+2\varsigma)\log r+ \mathfrak{E}_\sigma(r)) +O(1).
	\end{equation}
	This proves (a). 
	
	(b) Since $A$ is an ample divisor on $X$, fix a Hermitian metric $h$ on the line bundle associate to $A$, then the first Chern form $\omega:=c_1(A,h)>0$. 
	The Poincaré-Lelong formula implies
	$$ dd^c[\log\|P(j_k f)\|_h^2 ]= Nf^*\omega -[P(j_k f)],$$
	where $[P(j_k f)]$ is the divisor of zero associated to $P(j_k f)$. 
	Write $D$ locally as $(z_1\cdots z_s=0)$, 
	and let $f_j=z_j(f)$. Notice that  the pole order of $(\log f_j)^{(l)}$ 
	at  $z\in Y$ is at most  $l \min\{\ord_z(f_j), 1\}$. Hence, see that,  from the local expression of $P$ in (\ref{jdiff}), 
	 $$[P(j_k f)] \leq m (f^*D)^{[1]},$$
	 where,  for a divisor $f^*D=\sum_{p\in Y} n_p p$ on $Y$, $(f^*D)^{[1]}:=\sum_{p\in Y} \min\{n_p, 1\} p$.
	Hence
	$$dd^c[\log\|P(j_k f)\|^2_h] \geq N f^*\omega -m\cdot (f^*D)^{[1]}.$$
	Taking integral $\int_{r_1}^r\frac{dt}{t}\int_{B(t)}$ both sides and apply Green-Jensen formula (Lemma \ref{GJ}), we get 
	$$N T_{f, A}(r)\leq m {\overline N}_f(r,D)+\int_{S(r)} \log\|P(j_k f)\|^2_h d\mu_r.$$
	Note that on each $V_\lambda$ in the proof of (a), $\|P(j_k f)\|_h\leq C'_\lambda |P(j_kf)|$ for some $C'_\lambda > 0$. Hence (b) follows from the above inequality and (a).
\end{proof}

\subsection{The complement of hypersurfaces}

Let $X$ be a smooth projective variety and $D$ be a Cartier divisor on $X$. Recall that the 
stable locus of $D$ is 
defined by $B(D):=\bigcap_{m\in {\mathbb N}, F\in |mD|} F$.
As a consequence  of Theorem \ref{log}, we can get the following result.
\begin{corollary}\label{bd1}
 { Let  $X$ be a complex projective variety  and $D$ be a divisor on $X$ with simple normal crossing (possibly empty). Let $A$ be an ample line bundle
on $X$. Let $\pi_{0, k}: X_k(D)\rightarrow X$  be the log Demailly tower associated to the pair $(X, D)$.
For any positive integers $k, N, N'$, assume that the stable base locus 
$$B({\mathcal O}_{X_k(D)}(N)\otimes \pi_{0. k}^*A^{-N'})\subset X_k(D)^{sing}\cup \pi_{0. k}^{-1}(D).$$
 Then exists a constant $C>0$ such that for every holomorphic map $f: Y\rightarrow X$ with $f(Y)\not\subset D$
 where $Y$ is an open parabolic Riemann surface, if 
$f^*{\mathcal P}\not\equiv 0$,  then, for $\delta>0$,
$$T_{f, A}(r)\leqexc {N\over N'}{\overline N}_f(r,D) + 	C(\log^+ T_{f,A}(r)- \mathfrak{X}_{\sigma}(r) +(\delta+2\varsigma)\log r + \mathfrak{E}_\sigma(r))+O(1).$$ }
\end{corollary}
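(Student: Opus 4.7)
The strategy is to reduce the statement to Theorem \ref{log}(b) by producing, out of the hypothesis on the stable base locus, a logarithmic invariant jet differential on $X$ with values in a negative power of $A$ that does not vanish identically along $f$. I would first use the lift of $f$ to the Demailly--Semple tower. Since $f(Y)\not\subset D$, the holomorphic map $f$ lifts to a holomorphic map $f_{[k]}\colon Y\to X_k(D)$ (the usual $k$-th jet lift extends across the logarithmic boundary by the definition of $\mathcal{J}_k(X,-\log D)$), and the hypothesis $f^*\mathcal{P}\not\equiv 0$ is exactly the statement that $f_{[k]}(Y)$ is not contained in $X_k(D)^{\mathrm{sing}}\cup\pi_{0,k}^{-1}(D)$.

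Next, by the assumption
$$B(\mathcal{O}_{X_k(D)}(N)\otimes\pi_{0,k}^*A^{-N'})\subset X_k(D)^{\mathrm{sing}}\cup\pi_{0,k}^{-1}(D),$$
the definition of the stable base locus furnishes an integer $m\ge 1$ and a global section
$$\mathcal{P}\in H^0\bigl(X_k(D),\,\mathcal{O}_{X_k(D)}(mN)\otimes\pi_{0,k}^*A^{-mN'}\bigr)$$
which does not vanish identically on $f_{[k]}(Y)$, i.e.\ $f_{[k]}^*\mathcal{P}\not\equiv 0$. Using the Dethloff--Lu identification recalled just before the corollary,
$$\mathcal{E}_{k,mN}\Omega_X(\log D)=(\pi_k)_*\mathcal{O}_{X_k(D)}(mN),$$
the section $\mathcal{P}$ corresponds to an invariant logarithmic jet differential
$$P\in H^0\bigl(X,\,E_{k,mN}\Omega_X(\log D)\otimes A^{-mN'}\bigr)\subset H^0\bigl(X,\,E^{GG}_{k,mN}\Omega_X(\log D)\otimes A^{-mN'}\bigr),$$
with $f^*P=P(j_kf)\not\equiv 0$.

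Finally I would apply Theorem \ref{log}(b) directly to $P$, taking its order to be $k$, weighted degree $mN$, and the ample twist to be $A^{-mN'}$. The conclusion reads
$$T_{f,A}(r)\leqexc \frac{mN}{mN'}\,\overline{N}_f(r,D)+C\bigl(\log^+T_{f,A}(r)-\mathfrak{X}_\sigma(r)+(\delta+2\varsigma)\log r+\mathfrak{E}_\sigma(r)\bigr)+O(1),$$
and $mN/mN'=N/N'$ gives the desired inequality. The only real subtlety here is the translation between sections on $X_k(D)$ and invariant log jet differentials on $X$, together with the observation that $E_{k,mN}\Omega_X(\log D)$ is a subsheaf of $E^{GG}_{k,mN}\Omega_X(\log D)$ so that Theorem \ref{log}(b), stated for Green--Griffiths jet differentials, applies verbatim; once this bookkeeping is in place the proof is essentially a one-line invocation of part (b).
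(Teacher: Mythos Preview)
Your proof is correct and is exactly the derivation the paper has in mind: the corollary is stated there simply as ``a consequence of Theorem \ref{log}'' with no further argument, and your reduction via the Dethloff--Lu identification $(\pi_k)_*\mathcal{O}_{X_k(D)}(mN)\simeq\mathcal{E}_{k,mN}\Omega_X(\log D)$ followed by Theorem \ref{log}(b) is precisely that consequence. One small point worth making explicit, given the paper's convention that $C$ must be independent of $f$ and $Y$: rather than choosing $m$ and $\mathcal{P}$ after $f$, fix once and for all an $m\ge 1$ and finitely many sections $\mathcal{P}_1,\dots,\mathcal{P}_s\in H^0(X_k(D),\mathcal{O}_{X_k(D)}(mN)\otimes\pi_{0,k}^*A^{-mN'})$ whose common zero locus lies in $X_k(D)^{\mathrm{sing}}\cup\pi_{0,k}^{-1}(D)$ (this is possible by Noetherianity), apply Theorem \ref{log}(b) to each of the corresponding $P_i$, and take $C$ to be the maximum of the resulting constants.
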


We establish the following result which is an extension of Brotbek and Deng \cite{brotbek2019kobayashi},  Corollary 4.9.

\begin{theorem}\label{thm1}  { Let $X$ be a projective manifold of dimension $n\ge 2$ and let $A$ be
	a very ample line bundle over $X$. Let $D \in |A^m|$ be a general smooth hypersurface with
	$$m\ge (n + 2)^{n+3}(n + 1)^{n+3}.$$ Then there exists a constant $C>0$ such that  for any parabolic Riemann surface $Y$ and every holomorphic map $f: Y\rightarrow X$ with $f(Y)\not\subset D$, for $\delta>0$, one has
	 $$T_{f, A}(r) \leqexc  {\overline N}_f(r,D)+C(\log^+ T_{f, A}(r)- \mathfrak{X}_{\sigma}(r) + (\delta+2\varsigma)\log r  + \mathfrak{E}_\sigma(r))+O(1).$$ 
		In particular $(X, D)$ is a Nevanlinna pair. }
	\end{theorem}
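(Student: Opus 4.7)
My plan is to reduce the theorem to Corollary \ref{bd1} (equivalently Theorem \ref{log}(b)) by exhibiting logarithmic jet differentials of suitable negative twist on the log Demailly--Semple tower of $(X,D)$. The required existence statement is precisely the content of Brotbek--Deng's recent hyperbolicity theorem for general smooth hypersurfaces of large degree in a very ample linear system, whose quantitative bound $m \ge (n+2)^{n+3}(n+1)^{n+3}$ is exactly the assumption in the present theorem.

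More concretely, I would first invoke \cite{brotbek2019kobayashi}, Corollary 4.9 (and the underlying construction of log Wronskian jet differentials): under our hypothesis on $m$, for a general smooth $D \in |A^m|$ there exist positive integers $k,N,N'$ with $N\le N'$ such that
$$B\bigl(\mathcal{O}_{X_k(D)}(N)\otimes \pi_{0,k}^*A^{-N'}\bigr)\;\subset\; X_k(D)^{\mathrm{sing}}\cup \pi_{0,k}^{-1}(D).$$
The key quantitative feature is $N/N'\le 1$; this is what will produce the leading coefficient $1$ in front of $\overline N_f(r,D)$ in the final inequality.

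Next, given $f\colon Y\to X$ with $f(Y)\not\subset D$, the $k$-th jet lift $j_kf\colon Y\to X_k(D)$ meets the regular locus on a Zariski dense open subset and misses $\pi_{0,k}^{-1}(D)$ off a discrete set, so its image is not contained in the stable base locus above. One can therefore choose $P\in H^0\bigl(X, E_{k,N}\Omega_X(\log D)\otimes A^{-N'}\bigr)$ with $f^*P = P(j_kf)\not\equiv 0$. Applying Corollary \ref{bd1} with this $P$ then yields
$$T_{f,A}(r)\leqexc \frac{N}{N'}\overline N_f(r,D) + C\bigl(\log^+T_{f,A}(r) - \mathfrak{X}_\sigma(r) + (\delta+2\varsigma)\log r + \mathfrak{E}_\sigma(r)\bigr) + O(1),$$
and since $N/N'\le 1$ and $\overline N_f(r,D)\ge 0$, the first displayed inequality of the theorem follows.

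For the ``in particular'' conclusion that $(X,D)$ is a Nevanlinna pair, I would rescale by setting $\eta:=C^{-1}c_1(A,h)$ for a Hermitian metric $h$ on $A$, so that $T_{f,\eta}(r)=C^{-1}T_{f,A}(r)$ and the prefactor $C$ in front of the parabolic terms is absorbed by the normalization of $\eta$. The residual $\log^+T_{f,\eta}(r)$ on the right is handled by the standard calculus trick (Lemma \ref{lem:calculus}) after a small enlargement of the free parameter $\delta>0$. The main obstacle is the Brotbek--Deng input itself with the explicit degree bound $m\ge (n+2)^{n+3}(n+1)^{n+3}$ and, crucially, with $N\le N'$: this is the deep step, relying on their construction of log Wronskian jet differentials in the universal family of hypersurfaces of degree $m$ together with effective positivity estimates for the twisted sheaf on the log Demailly tower.
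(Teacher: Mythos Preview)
Your overall strategy matches the paper's: reduce to Corollary~\ref{bd1} via the Brotbek--Deng input on the log Demailly--Semple tower. However, you black-box precisely the step the paper carries out explicitly. The paper does \emph{not} simply cite Brotbek--Deng for the existence of $N,N'$ with $N\le N'$; rather it invokes their Corollary~4.5 (stated here as Proposition~\ref{bd2}), which only gives a family of line bundles $\mathcal{O}_{X_k(D)}(\beta+\alpha d^{k-1}k')\otimes\pi_{0,k}^*A^{\tilde\beta+\alpha(d^{k-1}k(\epsilon+kd)-l)}$ indexed by $\alpha\ge 0$, and then performs three genuinely additional steps: (i) decomposing the given $m$ as $\epsilon+(l+k)d$ with $l\ge d^{k-1}k'+d^{k-1}k(\epsilon+kd)$; (ii) letting $\alpha\to\infty$ so that the ratio $N/N'$ tends to $\frac{d^{k-1}k'}{l-d^{k-1}k(\epsilon+kd)}<1$; (iii) checking the numerical bound $(l_0+k)d+2d<(n+2)^{n+3}(n+1)^{n+3}$. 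You attribute all this to ``Corollary 4.9'' of \cite{brotbek2019kobayashi}, but that corollary is the hyperbolicity statement which Theorem~\ref{thm1} here is meant to \emph{extend}; the jet-differential input with the needed ratio control is not packaged there in the form you assume.

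So the proposal is correct in outline but hides the one nontrivial argument specific to this theorem. To match the paper's level of rigor you should state Proposition~\ref{bd2} as your input, explain the limit in $\alpha$, and do the numerology for the degree bound.
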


The proof of the above theorem  relies on the following  key result in the paper of Brotbek and Deng \cite{brotbek2019kobayashi}.

\begin{proposition}[\cite{brotbek2019kobayashi}, Corollary 4.5]\label{bd2} {We keep the notation in Corollary 4.5, \cite{brotbek2019kobayashi}: 
				 Let  $\epsilon$ be  a positive integer. Let $k=n+1$, $k'={k(k+1)\over 2}$ and $d=(k+1)n+k$. 
				  Let $l$ be an integer such that $l> d^{k-1} k(\epsilon +kd)$. 
	 Then there exist $\beta, \tilde \beta\in {\mathbb N}$ such that, for any
$\alpha\ge 0$, and for a general hypersurface $D \in |A^{\epsilon+(l+k)d}|$, denoting by $X_k(D)$ the log
Demailly k-jet tower,  the stable base locus
$$B({\mathcal O}_{X_k(D)}(\beta+\alpha d^{k-1}k')\otimes \pi_{0, k}^*A^{\tilde{\beta}+\alpha(d^{k-1} k(\epsilon +kd)-l)})\subset X_k(D)^{sing}\cup \pi_{0, k}^{-1}(D).$$}
\end{proposition}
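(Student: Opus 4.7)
The plan is to follow the effective Wronskian strategy of Brotbek--Deng \cite{brotbek2019kobayashi}, of which this statement is their Corollary~4.5. The approach produces an explicit family of invariant logarithmic jet differentials on the log Demailly--Semple tower $X_k(D)$ whose common zero locus is controlled by a deformation argument combined with a dimension count.

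First I would carry out the Wronskian construction. For a general $D \in |A^m|$ with $m = \epsilon + (l+k)d$, write a defining section $s$ of $A^{\otimes m}$ in a monomial basis of $H^0(X, A^{\otimes m})$ as $s = \sum_I a_I e_I$. For each $(k+1)$-tuple of monomial indices $(I_0, \dots, I_k)$ of appropriate total weight one assembles the generalized Wronskian determinant $\det\bigl((j_k f)^{\ast} e_{I_j}^{(i)}\bigr)_{0 \leq i, j \leq k}$; by the Leibniz / Fa\`a di Bruno rules and a logarithmic lifting, this yields an invariant logarithmic jet differential of order $k$ and weighted degree $k' = k(k+1)/2$. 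The universal Wronskian has degree $d = (k+1)n + k$ by a standard Pl\"ucker-type computation, so the resulting differential defines a global section of $\mathcal{O}_{X_k(D)}(k') \otimes \pi_{0,k}^{\ast} A^{-E}$ for an explicit exponent $E$ determined by the degrees of the selected monomials.

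Next I would extract the parameters $\beta, \tilde\beta$ and the role of the scaling $\alpha$. Taking $\alpha$-fold products of Wronskian sections, possibly combined with auxiliary factors coming from the higher-order parameter $l$, produces sections of $\mathcal{O}_{X_k(D)}(\beta + \alpha d^{k-1} k') \otimes \pi_{0,k}^{\ast} A^{\tilde\beta + \alpha(d^{k-1} k(\epsilon + kd) - l)}$, where $\beta, \tilde\beta$ absorb bounded bookkeeping terms from the base case $\alpha = 0$. The hypothesis $l > d^{k-1} k(\epsilon + kd)$ is exactly the threshold needed so that the $A$-exponent tends to $-\infty$ with $\alpha$, making the resulting bundle ``balanced'' in the Nakamaye--Brotbek sense (positive along the tautological direction of $X_k(D)$, strongly negative along $\pi_{0,k}^{\ast} A$), which is the correct shape for any subsequent Nevanlinna / base-locus application.

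The hard step is the base locus reduction, proved by the universal deformation method of Brotbek, extended by Deng to the logarithmic setting. One works on the universal pair $\mathcal{D} \subset X \times |A^m|$ and the associated relative log Demailly tower $\mathcal{X}_k(\mathcal{D}) \to |A^m|$, constructs auxiliary global vector fields on the parameter space that act as differential operators on the Wronskians, and uses them to degenerate sections and shrink their common zero set fiberwise. A careful dimension count then shows that, over a dense open subset of $|A^m|$, the fiberwise base locus is contained in $X_k(D)^{\operatorname{sing}} \cup \pi_{0,k}^{-1}(D)$; semi-continuity of the stable base locus transports this to a general $D \in |A^m|$. This dimension count is the main obstacle: the numerical constraints $k = n+1$, $k' = k(k+1)/2$, $d = (k+1)n + k$, and $l > d^{k-1} k(\epsilon+kd)$ are all engineered to make the codimension arithmetic close, and a less careful choice would leave residual base-locus components outside the allowed bad set.
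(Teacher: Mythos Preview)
The paper does not prove this proposition at all: it is quoted verbatim as Corollary~4.5 of Brotbek--Deng \cite{brotbek2019kobayashi} and used as a black box in the proof of Theorem~\ref{thm1}. There is therefore nothing in the paper to compare your argument against.

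Your outline is a reasonable high-level summary of the Brotbek--Deng strategy (Wronskian-type sections on the log Demailly tower, universal-family deformation with meromorphic vector fields, and a codimension count to control the base locus). If your intent is simply to indicate where the result comes from, a one-line citation would match the paper; if you actually wish to reprove it, be aware that the details---especially the vector-field construction and the precise bookkeeping that produces the exponents $\beta+\alpha d^{k-1}k'$ and $\tilde\beta+\alpha(d^{k-1}k(\epsilon+kd)-l)$---are substantially more delicate than your sketch suggests, and you would need to reproduce several technical lemmas from \cite{brotbek2019kobayashi} rather than just the narrative.
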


\noindent{\it Proof of Theorem \ref{thm1}}.
Note that $k = n + 1, d=  (k + 1)n + k = n^2 + 3n + 1$ and set
$$l_0=d^{k-1}k'+d^{k-1}(d+1)^2=d^{k-1}(d+1)\left(d+{3\over 2}\right).$$
By the basic inequality
$$k(k+d-1+kd)<(d+1)^2,$$
one can show that any $m\ge (l_0+k)d+2d$ can be written in the form
$$m=\epsilon+(l+k)d$$ with 
$k\leq \epsilon\leq k+d-1$, and $l\ge d^{k-1}k'+d^{k-1}k(\epsilon +kd)$. 
 In particular, applying
Corollary \ref{bd1} and Proposition \ref{bd2}, we see that for such $m$ and  a general hypersurface $D \in |A^m|$, 
there exists a constant $C>0$ such that
$$T_{f, A}(r)\leqexc {\beta+\alpha d^{k-1}k'\over -\tilde{\beta}-\alpha(d^{k-1} k(\epsilon +kd)-l)}{\overline N}_f(r,D)$$
$$ + 	C(\log^+ T_{f, A}(r)- \mathfrak{X}_{\sigma}(r) + (\delta  +2\varsigma)\log r + \mathfrak{E}_\sigma(r))+O(1).$$ 
However, when $\alpha\rightarrow \infty$, 
$${\beta+\alpha d^{k-1}k'\over -\tilde{\beta}-\alpha(d^{k-1} k(\epsilon +kd)-l)}
\rightarrow {d^{k-1}k'\over l-(d^{k-1} k(\epsilon +kd))}<1.$$
Thus $$T_{f, A}(r) \leqexc {\overline N}_f(r,D)  + 	C(\log^+ T_{f, A}(r) -\mathfrak{X}_{\sigma}(r) + (\delta+2\varsigma)\log r +\mathfrak{E}_\sigma(r))+O(1).$$
It remains to give a bound on 
$(l_0 +
k)d + 2d$. Indeed we have
$$(l_0 +
k)d + 2d =\left(d^{k-1}(d+1)\left(d+{3\over 2}\right)+k+2\right)d$$
$$<(n + 2)^{n+3}(n + 1)^{n+3}.$$
This proves the theorem.

\subsection{The Abelian variety case}

We now consider the abelian variety case, and use Theorem \ref{log} to extend the result of Siu-Yeung \cite{siu1997defects}.
Let $A$ be an abelian variety of dimension $n$. Then the universal covering $\mathbb{C}^{n}\to A$ induces global holomorphic $1$-forms $\omega_1:=dw_1,\dots, \omega_n:=dw_n$, where $(w_1,\dots,w_n)$ are standard coordinates on $\mathbb{C}^n$.  The global $1$-forms give global holomorphic coordinates on $J_k A$ so that $J_kA \simeq A\times {\mathbb C}^{kn}$. We fix such a trivialization for every $k$. The induced coordinates will be called below ``the jet coordinates of $J_k A$".
Let $f:Y\to A$ be a holomorphic map. Fix a global vector field $\xi$ on $Y$, then the lifting of $f$, denoted by 
$j_kf: Y\mapsto A\times J_k A=A\times \mathbb{C}^{kn}$,  is given by 
 $$j_kf=(f, f'_1,\dots, f'_n, \dots, f^{(k)}_1,\dots, f^{(k)}_n)$$
where $f'_i =(f^*\omega_i)(\xi)$ for $i=1, \dots, n$, and inductively, $f_i^{(k)} =(df_i^{(k-1)})(\xi)$.
\begin{theorem}\label{abelian} Let $A$ be an abelian variety. Let $D$ be an ample divisor on $A$. Then there exists a constant $C>0$ such that    for every holomorphic map $f: Y\rightarrow A$ with $f(Y)\not\subset D$
	where $Y$ is a parabolic Riemann surface, we have, for
	some $k_0>0$, 
	$$T_{f, D}(r)\leqexc {\overline N}^{[k_0]}_f(r, D)+C(\log^+ T_{f, D}(r)  - \mathfrak{X}_{\sigma}(r)+  (\delta+2\varsigma)\log r + \mathfrak{E}_\sigma(r))+ O(1).$$
	In particular, 
	$(A, D)$ is 
	a Nevanlinna pair.
\end{theorem}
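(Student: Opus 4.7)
The proof strategy adapts Siu--Yeung's method \cite{siu1997defects} to the parabolic setting, using Theorem \ref{log}(b) as the key analytic input. The crucial algebraic ingredient is that $A$ has trivial tangent bundle: the invariant $1$-forms $\omega_1,\dots,\omega_n$ trivialize $\Omega_A^1$, and hence $J_k A$ trivializes as $A\times V_k$ with $V_k\simeq \mathbb{C}^{kn}$. Consequently, invariant jet differentials of weighted degree $m$ twisted by $[D]^{-N}$ correspond to sections of the $m$-th graded piece of $\mathrm{Sym}^{\bullet}V_k^*\otimes [D]^{-N}$ on $A$, whose existence is governed by asymptotic Riemann--Roch on $A$ together with ampleness of $[D]$.

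The first step is to reduce to the case where $\overline{f(Y)}=A$. By the Bloch--Ochiai--Kawamata theorem (extended to parabolic Riemann surfaces via the characteristic function estimates already developed in the paper), if $\overline{f(Y)}\subsetneq A$ then it is a translate $a+B$ of a proper abelian subvariety $B\subsetneq A$; the restriction of $D$ to $a+B$ remains ample, and the theorem follows by induction on $\dim A$, with the elliptic curve case being classical. Granted Zariski density, the main step is to construct, for any $\varepsilon>0$, a non-trivial jet differential
$$P\in H^0\bigl(A,\, E_{k,m}^{GG}\Omega_A\otimes [D]^{-N}\bigr)\quad\text{with}\quad N/m\geq 1-\varepsilon$$
satisfying $f^*P\not\equiv 0$. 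Existence of a nonzero $P$ with $N\le (1-\varepsilon)m$ follows from Riemann--Roch applied to $\mathrm{Sym}^mV_k^*\otimes [D]^{-N}$ on $A$. Non-vanishing on $j_kf$ is arranged by a translation argument: considering the linear family of translates $\tau_a^*P$ for $a\in A$, if $f^*(\tau_a^*P)\equiv 0$ held for every $a\in A$, then $j_kf(Y)$ would be contained in a proper closed subset of $J_k A$, forcing $f(Y)$ into a proper subvariety of $A$ and contradicting Zariski density; hence a generic $a$ produces the required $P$.

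With such $P$ in hand, applying Theorem \ref{log}(b) with the ample line bundle $[D]$ (after replacing $D$ by a linearly equivalent smooth divisor if needed to meet the simple normal crossing hypothesis) yields
$$T_{f,D}(r)\leqexc \frac{m}{N}\,\overline{N}^{[k_0]}_f(r,D) + C\bigl(\log^+ T_{f,D}(r) - \mathfrak{X}_\sigma(r) + (\delta+2\varsigma)\log r + \mathfrak{E}_\sigma(r)\bigr) + O(1),$$
where the truncation level $k_0$ emerges from the pole-order estimate $[P(j_kf)]\le m(f^*D)^{[k_0]}$, with $k_0$ determined by the order $k$ of $P$ and its local vanishing behaviour along $D$. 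Letting $\varepsilon\to 0$ forces $m/N\to 1$ and gives the stated coefficient $1$. The main obstacle will be exhibiting $P$ uniformly, i.e.\ controlling the constant $C$ as $m$ and $N$ grow through the range needed to approach ratio $1$; the abelian variety structure is precisely what makes the translation argument for non-vanishing work, and this is the essential ingredient that does not extend to arbitrary projective varieties.
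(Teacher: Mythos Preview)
Your construction of a twisted jet differential via Riemann--Roch does not work. On an abelian variety the cotangent bundle $\Omega_A^1$ is trivial, so $E_{k,m}^{GG}\Omega_A$ is a trivial bundle of rank $r_{k,m}=\dim\mathrm{Sym}^m V_k^*$. Hence
\[
H^0\bigl(A,\,E_{k,m}^{GG}\Omega_A\otimes [D]^{-N}\bigr)\;\simeq\;\bigl(\mathrm{Sym}^m V_k^*\bigr)\otimes H^0\bigl(A,[D]^{-N}\bigr)=0
\]
for every $N>0$, since $[D]$ is ample. There is thus no nonzero $P$ to feed into Theorem~\ref{log}(b), and your limiting argument $m/N\to 1$ never gets started. (Riemann--Roch gives a nonzero Euler characteristic, but all the cohomology lives in top degree.) This is exactly why Siu--Yeung do \emph{not} use a negatively twisted jet differential in the Zariski-dense case.

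What the paper actually does, following Siu--Yeung, is to exploit the projection $I_{k_0}:J_{k_0}A=A\times\mathbb{C}^{nk_0}\to\mathbb{C}^{nk_0}$ and find a polynomial $P$ on the fibre $\mathbb{C}^{nk_0}$ vanishing on $I_{k_0}(J_{k_0}(D))$ but not on $I_{k_0}(j_{k_0}f(Y))$; the existence of such a $k_0$ is established by a translation/decreasing-sequence argument. This $P$ carries no negative twist. The link to $D$ comes instead from the local identity $I_{k_0}^*P=\sum_{j=0}^{k_0}a_{\lambda j}\,d^j\sigma_\lambda$, which after dividing by the canonical section bounds $m_f(r,D)$ in terms of $m\bigl(r,1/(I_{k_0}^*P)(j_{k_0}f)\bigr)$ plus logarithmic-derivative error terms (Theorem~\ref{log}(a), not (b)). The truncation to level $k_0$ then arises from the order comparison $\ord_z f^*D-\min\{\ord_z f^*D,k_0\}\le \ord_z\bigl((I_{k_0}^*P)(j_{k_0}f)\bigr)_0$ combined with the First Main Theorem, not from Theorem~\ref{log}(b).

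Your reduction in the non-dense case is also too quick: for a parabolic $Y$ (e.g.\ a compact curve minus a point) the Zariski closure of $f(Y)$ need not be a translate of an abelian subvariety, so Bloch--Ochiai--Kawamata does not apply directly. The paper instead quotients by the stabilizer of the closure $X$ and, when $X$ is not a translated subtorus, invokes the P\u{a}un--Sibony argument (Propositions~5.3--5.4 of \cite{paun2014value}) to produce a jet differential with values in a negative line bundle on $X$; only here does Theorem~\ref{log}(b) enter, with the divisor taken empty.
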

\begin{proof}  We first deal with the case when $f(Y)$ is Zariski dense in $A$. Let $X_k(f)$ be the Zariski closure of the image of $Y$ under $j_k f$.  Let $I_k$ denote the restriction on $X_k(f)$  of the second projection 
	$\iota_k:J_k A=A\times \mathbb{C}^{nk}\mapsto \mathbb{C}^{nk}$. Let $J_k(D)$ be the $k$-jets of $A$ which are annihilated by $\sigma, d\sigma,\dots,d^k\sigma$  where $\sigma$ is a local defining function of $D$ (when $D$ is a smooth subvariety of $A$ it coincides with the $k$-jet bundle of $D$). 

	Claim: There exists an integer $k \geq 0$ such that $I_k(X_k(f))\cap I_k(J_k(D))\neq I_k(J_k(D))$. 
	
	To prove the claim, fix a point $y_0\in Y$,  it suffices to show $I_k(j_kf(y_0))\notin I_k(J_k(D))$ for some $k\geq 0$.
	 Assume this is not true, i.e. $I_k(j_kf(y_0))\in I_k(J_k(D))$ for all $k$. 
	 Then $J_k(D)\cap I_k^{-1}(I_k(j_kf(y_0)))\neq \emptyset$ for all $k$.
	 	Define $$V_k: = p_k(J_k(D)\cap I_k^{-1}(I_k(j_kf(y_0))))\neq \emptyset,$$
	where $p_{k}$ is the projection $J_k(A)\rightarrow A$. 
	Note that $V_k$ is Zariski closed (because $p_{k}: J_k(A)\rightarrow A$ has a section 
	$\mathrm{id}_A\times \{I_k(j_kf(y_0))\}: A \rightarrow J_k(A)$, and $V_k$ is the pull-back of $\Supp J_k(D)$
	by this section), and note that $V_{k+1}\subset V_k$, 
	 we obtain a decreasing sequence of closed subsets on $A$
	$$D\supset V_1\supset V_2\supset \cdots $$
	which eventually stablizes to a closed set called $V$.  By assumption the elements in the decreasing sequence are non-empty hence $V$ is non-empty. 
	Let $a$ be an element in $V$, so $a+j_kf(y_0)\in J_k(D)$. 
	Define $\tilde{f}(y): =f(y)+a-f(y_0)$, then $\tilde{f}(y_0) = a$, so $j_k\tilde{f}(y_0)\in J_k(D)$ for any $k\geq 0$. By a power series argument,  we get 
 $\tilde{f}(Y)\subset D$, which contradicts to $f$ being algebraically non-degenerate. This proves the claim.

	Note that 
	$I_k$ is proper, therefore $Y_k:=I_k(X_k(f))$ is an irreducible algebraic subset of ${\mathbb C}^{nk}$. 
	By the claim, there is $k_0$ for which there is a polynomial $P$ on ${\mathbb C}^{nk_0}$ satisfying 
	$$P|_{Y_{k_0}}\not\equiv 0, ~~~P|_{J_{k_0}(D)}\equiv 0.$$
	Let $\{U_{\lambda}\}_{\lambda\in \Lambda}$ be a finite open covering of $A$ such that $D\cap U_{\lambda}=(\sigma_{\lambda}=0)$, where $\sigma_{\lambda}$ is regular on a Zariski open neighborhood of $U_{\lambda}$, same as in the proof of Theorem \ref{thm:deng-zuo}, we can view $\sigma_{\lambda}$ as a rational functions on $A$. Then 
	$$\sigma_{\lambda}=d\sigma_{\lambda}=\cdots = d^k\sigma_{\lambda}=0$$
	give defining equations of $J_k(D)|_{U_{\lambda}}$, hence on each $U_{\lambda}$ one obtains the following equation:
	\begin{equation}\label{P}
	a_{\lambda 0}\sigma_{\lambda} + \cdots + a_{\lambda k_0}d^k\sigma_{\lambda}=
	I_{k_0}^*P|_{U_{\lambda}}.
	\end{equation}
	Here $a_{\lambda j}$ are polynomials in jet coordinates with coefficients of rational 
	holomorphic functions on $U_{\lambda}$ restricted on $J_{k_0}(A)|_{U_{\lambda}}$. 
	
	Let $\{h_{\lambda}\}$ be a Hermitian metric on the line bundle $[D]$ associated to $D$, i.e. 
	$$\|\sigma\|^2=h_{\lambda}|\sigma_{\lambda}|^2,$$
	where $\sigma$ is the canonical section of the line bundle $[D]$ (i.e. $[\sigma]=D$).
	Hence from (\ref{P})
	$${|I_{k_0}^*P_{U_{\lambda}}| \over \|\sigma\|}  \leq {1\over h^{1/2}_{\lambda}} 
	\left(|a_{\lambda 0}| + \cdots + 
	|a_{\lambda k_0}|\left|{d^{k_0}\sigma_{\lambda}\over \sigma_{\lambda}}\right|\right).$$
		Choose relatively compact open subsets $V_{\lambda}$ of $U_{\lambda}$ so that $\bigcup_{\lambda} V_{\lambda}=A$. 
Let $w_{ij}, 1\leq i\leq n, 1\leq j \leq k_0$, be the coordinate system on $\mathbb{C}^{nk_0}$ coming from restriction of the jet coordinates of $J_k A$.
Since $a_{\lambda t}$ are polynomials in the jet coordinates with coefficients given by 
holomorphic functions on $U_{\lambda}$, for every $\lambda$, there exist a positive constant $C_{\lambda}$ and an integer $d_{\lambda}>0$ such that,
 for every $z\in Y$ with $f(z)\in V_{\lambda}$ and every   $1\leq t \leq k_0$, 
$$h^{-1/2}_{\lambda}(f(z))|a_{\lambda t} (j_{k_0}f(z))|\leq C_{\lambda} \left(1+ \sum_{1\leq i\leq n, 1\leq j\leq k_0} |w_{ij}(j_{k_0}(f))|\right)^{d_{\lambda}}.$$ 
Hence, for every $z\in Y$ with $f(z)\in V_{\lambda}$, 
\begin{eqnarray*} 
&~&{|(I_{k_0}^*P)(j_{k_0}f(z))|\over \|\sigma(f(z))\|}\\
&\leq& C_{\lambda} 
	 \left(1+ \sum_{1\leq i\leq n, 1\leq j\leq k_0} |w_{ij}(j_{k_0}(f))|\right)^{d_{\lambda}}
	\times	\left (1+\sum_{k=1}^{k_0} \left|{{d^k\over dz^k}
		\sigma_{\lambda}(f(z))\over \sigma_{\lambda}(f(z))}\right|\right).
	\end{eqnarray*}

On the other hand, by Theorem \ref{log}  (part (a)), we get, for any  $1\leq i\leq n, 1\leq j \leq k_0$, 
$$\int_{S(r)} \log^+ |w_{ij}(j_{k_0}f)| d\mu_r \leqexc C_1(\log^+ T_{f, D}(r) - \mathfrak{X}_{\sigma}(r)+(\delta+2\varsigma)\log r+ \mathfrak{E}_\sigma(r))+O(1).$$
Hence
	\begin{eqnarray*} 
		m_f(r, D)&\leqexc&  C_2\left(\sum_{\lambda, 1 \leq j \leq k_0}m\left(r, 
		{(\sigma_{\lambda}\circ f)^{(j)}\over \sigma_{\lambda}\circ f}\right)\right)
		+ m\left(r, {1\over (I_{k_0}^*P)(j_{k_0}f)}\right)\\
		&~&+ C_3(\log^+ T_{f, D}(r) - \mathfrak{X}_{\sigma}(r) + (\delta+2\varsigma)\log r +\mathfrak{E}_\sigma(r))+O(1).
	\end{eqnarray*}
Again, by Theorem \ref{log} (a) or by the logarithmic derivative lemma  
$$m\left(r, 
{(\sigma_{\lambda}\circ f)^{(j)}\over \sigma_{\lambda}\circ f}\right) \leqexc 
C_4(\log^+ T_{f, D}(r) - \mathfrak{X}_{\sigma}(r) + (\delta+2\varsigma)\log r + \mathfrak{E}_\sigma(r))+O(1).$$
Hence, 
\begin{equation}
\begin{split}
m_f(r, D)&\leqexc  m\left(r, {1\over (I_{k_0}^*P)(j_{k_0}f)}\right)\\
 &\qquad+
 C_5(\log^+ T_{f, D}(r)  - \mathfrak{X}_{\sigma}(r)+ (\delta+2\varsigma)\log r+ \mathfrak{E}_\sigma(r))+ O(1).
\end{split}
\end{equation}
 Note here, for a meromorphic function $g$ on $Y$, we also use $m(r, g)$ to denote $m_g(r, \infty)$ and 
 $N(r, g)$ to denote $N_g(r, \infty)$.
By adding $N_{f}(r,D)$ both sides of the above inequality and applying the First Main Theorem (\ref{eqn:np-fmt}), we get
\begin{eqnarray*} 
	T_{f, D}(r)&\leqexc&  
	N_f(r, D)+ m\left(r, {1\over (I_{k_0}^*P)(j_{k_0}f)}\right)\\
	&~&+ C_5(\log^+ T_{f, D}(r) - \mathfrak{X}_{\sigma}(r)+ (\delta+2\varsigma)\log r + \mathfrak{E}_\sigma(r))+O(1).
	\end{eqnarray*}
From (\ref{P}), we see that, for any $z\in Y$, 
$$\ord_z f^*D - \min\{\ord_z f^*D, k_0\}\leq \ord_z ((I_{k_0}^*P)(j_{k_0}f))_0,$$
where $((I_{k_0}^*P)(j_{k_0}f))_0$ is the divisor of zeroes on $Y$ associated to  $(I_{k_0}^*P)(j_{k_0}f)$. 
Hence 
$$N_f(r, D) - N^{[k_0]}_f(r, D)\leq N_{(I_{k_0}^*P)(j_{k_0}f)}(r, 0).$$
Therefore, by the First Main Theorem, 
\begin{equation}\label{eqn:tf}
\begin{split} 
	T_{f, D}(r)&\leqexc  N^{[k_0]}_f(r, D)+  m\left(r, {1\over (I_{k_0}^*P)(j_{k_0}f)}\right) +  N\left(r, {1\over (I_{k_0}^*P)(j_{k_0}f)}\right)\\
	&\qquad+ C_5(\log^+ T_{f, D}(r) - \mathfrak{X}_{\sigma}(r)+  (\delta+2\varsigma)\log r+ \mathfrak{E}_\sigma(r))+O(1)\\
	&\leqexc  N^{[k_0]}_f(r, D)+ m(r,   (I_{k_0}^*P)(j_{k_0}f))\\
&\qquad+ C_5(\log^+ T_{f, D}(r) - \mathfrak{X}_{\sigma}(r)+  (\delta+2\varsigma)\log r + \mathfrak{E}_\sigma(r))+O(1).\\	
\end{split}
\end{equation}
Applying Theorem \ref{log} (a) again we get 
\begin{equation}\label{eqn:tik}
m(r,  (I_{k_0}^*P)(j_{k_0}f))\leqexc 
C_6(\log^+ T_{f, D}(r) - \mathfrak{X}_{\sigma}(r) +  (\delta+2\varsigma)\log r + \mathfrak{E}_\sigma(r))+O(1).
\end{equation}
Combining (\ref{eqn:tf}), (\ref{eqn:tik}) finishes the proof in this case.

We now deal with the case when $f(Y)$ is not Zariski dense.  Let $X$ be the Zariski closure of $f(Y)$.
We can assume that $X$ is not a translate of an Abelian sub-variety of $A$, since if it is, then it follows from the condition $f(Y)\not\subset D$ that $m_f(r, D)=m_f(r, D\cap X)$. 
 Hence the above argument can be applied to  the case $A=X$ and $D$ is $D\cap X$ to get our conclusion. Furthermore,  let $A_0$
 be the quotient of the subgroup of all elements whose translates leave $X$
 invariant, i.e., if $B = \{a\in A~|~a + X = X\}$, then $A_0 = A/B$. By replacing
 $f$  by its composite with the quotient map $A\rightarrow A_0$,  we can assume without
 loss of generality that $X$ is not invariant by the translate of any subgroup
 of $A$ with positive dimension. 
 
 Starting
 with $A_0=A$ and $V_0 := T_X$, we consider the  Demaily-Semple jet tower of directed manifolds $(A_k, V_k)_{k\ge 0}$, 
 whose construction was recalled in the begining of subsection 3.2 above.
 Note that $A_k=A \times  {\mathcal R}_{n,k}$ where 
${\mathcal R}_{n,k}$  is the ``universal" rational homogeneous
variety ${\mathbb C}^{nk}/{\mathbb G}_k$. The curve $f: Y\rightarrow A$ lifts to $A_k$, and 
we denote by $f_k: Y\rightarrow A_k$ the lift of $f$.  Let $X_k$ 
 be the Zariski closure of the image of $f_k$, and let
 $\tau_k: X_k\rightarrow {\mathcal R}_{n,k}$
  be the composition of the injection $X_k\rightarrow A_k$  with the projection on
 the second factor  $A_k\rightarrow {\mathcal R}_{n,k}$. According to Proposition 5.3 in \cite{paun2014value}, 
if   for each $k \ge 1$ the fibers of $\tau_k$ are
 positive dimensional, then the dimension of the subgroup $A_X$  of $A$
 defined by
$$ A_X := \{a \in X~|~ a + X = X\}$$
 is strictly positive. On the other hand, we have assumed that 
 $X$ is not invariant by the translate
 of any subgroup of $A$ with positive dimension. 
 Hence we get, for some $k\ge 1$, the  map  $\tau_k: X_k\rightarrow {\mathcal R}_{n,k}$ 
 has finite generic fibers. Thus, by Proposition 5.4 in  \cite{paun2014value}, 
 there exists a jet differential ${\mathcal P}$ of order
 $k$ with values in the dual of an ample line bundle, and whose restriction
 to $X_k$ is non-identically zero. This implies, from Theorem \ref{log}(b) (where the normal crossing divisor is taken to be empty) that, for some $\delta > 0$, 
$$T_{f, D}(r)\leqexc C(\log^+ T_{f, D}(r) - \mathfrak{X}_{\sigma}(r) + (\delta+2\varsigma)\log r + \mathfrak{E}_\sigma(r))+O(1).$$
\end{proof}

\section{The hyperplane case}
In this section we consider the case when $X={\mathbb P}^n({\mathbb C})$ and  $D$ is given by a collection of hyperplanes. 
Let ${\mathcal H}$ be a finite set of hyperplanes in ${\mathbb P}^n({\mathbb C})$.  Let $|{\mathcal H}|:=\sum_{H\in \mathcal{H}}H$. Let $\mathcal{L}$ be the set of defining linear forms of the hyperplanes in $\mathcal{H}$. 
\begin{definition}[\cite{ru95}] 
$\mathcal{H}$ is said to be  {\it non-degenerate} if 
\begin{enumerate}
\item  $\dim({\mathcal L}) =n+1$;  
\item For any proper non-empty subset ${\mathcal L}'$ of ${\mathcal L}$
$$\label{527}
({\mathcal L}')\cap ({\mathcal L}\setminus{\mathcal L}')\cap {\mathcal L}
\not= \emptyset,  $$  
\end{enumerate}
where $({\mathcal L})$ means the vector space generated by the linear forms in ${\mathcal L}$.
\end{definition}
In \cite{ru95} the second named author showed that $\mathbb{P}^n(\mathbb{C})\setminus |\mathcal H|$ is Brody hyperbolic if and only if $\mathcal{H}$ is non-degenerate. Our main purpose of this section is to show that if $\mathcal H$ is non-degenerate then $(\mathbb{P}^n(\mathbb{C}),|\mathcal H|)$ is a Nevanlinna pair. Hence, 
since Nevanlinna pair implies Brody hyperbolic, we get
\begin{theorem}\label{hyper1} 
	$({\mathbb P}^n({\mathbb C}), |{\mathcal H}|)$ is a Nevanlinna pair if and only if ${\mathbb P}^n({\mathbb C})\backslash |{\mathcal H}|$ is Brody hyperbolic. 
\end{theorem}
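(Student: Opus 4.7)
The easy direction, (a) $\Rightarrow$ (b), has already been observed in the introduction: for $Y=\mathbb{C}$ equipped with $\sigma(z)=|z|$ one has $\mathfrak{X}_\sigma(r)=\log r$, $\mathfrak{E}_\sigma(r)=O(1)$ and $\varsigma=1/2$, so the Nevanlinna pair inequality specializes to $T_{f,\eta}(r)\leqexc\delta\log r+O(1)$ for any holomorphic $f:\mathbb{C}\to\mathbb{P}^n(\mathbb{C})\setminus|\mathcal{H}|$, which forces $f$ to be constant. The substance of the theorem is the converse.

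For the converse, I would invoke the second named author's characterization in \cite{ru95}: $\mathbb{P}^n(\mathbb{C})\setminus|\mathcal{H}|$ is Brody hyperbolic if and only if $\mathcal{H}$ is non-degenerate in the combinatorial sense recalled above. It therefore suffices to show that if $\mathcal{H}$ is non-degenerate then $(\mathbb{P}^n(\mathbb{C}),|\mathcal{H}|)$ is a Nevanlinna pair. The plan is to prove a parabolic Cartan-type Second Main Theorem adapted to this non-degenerate setting. As a first step, assume $f:Y\to\mathbb{P}^n(\mathbb{C})$ is linearly non-degenerate and $H_1,\dots,H_q$ are hyperplanes in general position with $q\ge n+2$; applying the parabolic logarithmic derivative lemma (Proposition \ref{prop:ldlpaun}) to the logarithmic derivatives of the pull-backs of defining linear forms, combined with the standard Wronskian estimate, should yield
$$(q-n-1)T_f(r)\leqexc\sum_{j=1}^q\overline{N}_f^{[n]}(r,H_j)+C\bigl(\log^+T_f(r)-\mathfrak{X}_\sigma(r)+(\delta+2\varsigma)\log r+\mathfrak{E}_\sigma(r)\bigr)+O(1).$$
The error terms on the right are exactly those permitted by the Nevanlinna pair definition.

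For a general non-degenerate $\mathcal{H}$ and an arbitrary $f:Y\to\mathbb{P}^n(\mathbb{C})$ with $f(Y)\not\subset|\mathcal{H}|$, I would proceed by induction on $n$, imitating the reduction in \cite{ru95}. When $f$ is linearly non-degenerate, a projection-and-rescaling argument reduces to a collection of hyperplanes in general position, to which the Cartan SMT above applies; the bound $\sum_j\overline{N}_f(r,H_j)\le|\mathcal{H}|\cdot\overline{N}_f(r,|\mathcal{H}|)$ together with rescaling of the Fubini--Study metric then produces the Nevanlinna pair inequality (the definition allows any positive $(1,1)$-form $\eta$, not just $\omega_{FS}$). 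When $f(Y)$ is contained in a proper linear subspace $V\cong\mathbb{P}^m$, pass to the restricted configuration $\mathcal{H}|_V:=\{H\cap V:H\in\mathcal{H},\ H\not\supset V\}$; the non-degeneracy condition (2) is tailor-made to guarantee that $\mathcal{H}|_V$ is itself non-degenerate in $V$, and the inductive hypothesis completes the argument.

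The main obstacle I foresee is the combinatorial step of verifying that non-degeneracy is preserved under restriction to arbitrary intersections of hyperplanes in $\mathcal{H}$; this is precisely the reason for the somewhat non-obvious condition (2) in the definition, and handling it carefully is the technical heart of \cite{ru95}. A secondary bookkeeping task is to convert the truncated counting functions $\overline{N}_f^{[n]}$ to the untruncated $\overline{N}_f$ (at worst a factor of $n$) and to absorb all resulting multiplicative constants into the choice of $\eta$, which is routine.
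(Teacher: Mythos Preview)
Your easy direction and your invocation of the Brody-hyperbolic $\Leftrightarrow$ non-degenerate equivalence from \cite{ru95} are both correct. However, your strategy for the hard direction diverges from the paper's and contains a genuine gap.

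The paper does \emph{not} proceed by induction on $n$ or by restricting $f$ to a linear subspace. Instead, the key device is \emph{minimal linear relations}: given any minimal relation $c_1L_1+\cdots+c_{u+1}L_{u+1}=0$ among elements of $\mathcal{L}$, one defines $g_R:Y\to\mathbb{P}^{u-1}(\mathbb{C})$ by $z\mapsto[c_1L_1(\mathbf{f})(z):\cdots:c_uL_u(\mathbf{f})(z)]$. Minimality forces $g_R$ to be linearly non-degenerate, and the $u+1$ coordinate hyperplanes together with $\{c_1w_1+\cdots+c_uw_u=0\}$ are automatically in general position in $\mathbb{P}^{u-1}$. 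One then applies the parabolic Cartan SMT to $g_R$ to bound $T_{L_\alpha(\mathbf{f})/L_\beta(\mathbf{f})}(r)$. The non-degeneracy condition (2) is used not to survive restriction to subspaces, but to guarantee that any two forms in $\mathcal{L}$ can be connected by a \emph{chain} of such minimal relations; chaining at most $n-1$ of these bounds produces $n+1$ independent forms $L_{i_1},\dots,L_{i_{n+1}}$ with $T_{L_{i_\alpha}(\mathbf{f})/L_{i_1}(\mathbf{f})}(r)$ controlled, and then $T_f(r)\le\sum_{\alpha=2}^{n+1}T_{L_{i_\alpha}(\mathbf{f})/L_{i_1}(\mathbf{f})}(r)$ finishes. (Incidentally, the paper's Cartan SMT is proved via the curvature method and carries no $\mathfrak{E}_\sigma(r)$ term, though your log-derivative version would also be admissible.)

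Your gap is in the linearly non-degenerate case. You write that ``a projection-and-rescaling argument reduces to a collection of hyperplanes in general position,'' but this is not explained and, as stated, does not work: a non-degenerate $\mathcal{H}$ need not contain $n+2$ hyperplanes in general position at all (e.g.\ $x,y,z,x+y,x+z$ in $\mathbb{P}^2$), so there is nothing for the Cartan SMT to bite on directly. The minimal-relations trick is precisely what manufactures general-position configurations in auxiliary lower-dimensional projective spaces. Your restriction-and-induction plan for the degenerate case is also not the content of \cite{ru95}; the claim that $\mathcal{H}|_V$ remains non-degenerate for the linear span $V$ of $f(Y)$ would itself require proof, and you would further need the resulting constants to be uniform over all possible $V$ in order to obtain a single $\eta$.
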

We first establish a parabolic version of the Ahlfors-Wyel Second Main Theorem.
\begin{theorem}\label{cartan}  Let $Y$ be a parabolic Riemann surface and $f: Y  \rightarrow {\mathbb P}^n({\mathbb C})$ be a linearly  nondegenerate 
	holomorphic map, i.e. its image is not contained in any proper linear subspaces of ${\mathbb P}^n({\mathbb C})$. Let $H_1,\dots,H_q$ be  hyperplanes on $\mathbb{P}^n(\mathbb{C})$ in general position, then, for $\delta',\delta>0$, 
	\begin{eqnarray*}
	&~&	\sum_{j=1}^q m_f(r, H_j)+N_{W}(r, 0)\leqexc
	(n+1+\delta')T_f(r)\\&~& +  \left(\frac{n(n+1)}{2}+\delta'\right)[-\mathfrak{X}_\sigma(r)+(\delta+2\varsigma)\log r]+O(1),
	\end{eqnarray*}
	where $W$ is the Wronskian of $f$.
	\end{theorem}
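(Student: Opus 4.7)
The plan is to adapt the classical Ahlfors--Cartan proof of the Second Main Theorem to the parabolic Riemann surface setting, with the parabolic logarithmic derivative lemma (Proposition~\ref{prop:ldlpaun}) as the main analytic input. Since $Y$ is non-compact, all holomorphic line bundles on $Y$ are trivial, so $f$ admits a global reduced representation $(f_0,\dots,f_n)\colon Y\to\mathbb{C}^{n+1}\setminus\{0\}$ by holomorphic functions without common zero. Using the nowhere vanishing global vector field $\xi$, I would set
\begin{equation*}
W=W_\xi(f_0,\dots,f_n):=\det\bigl(\xi^j f_i\bigr)_{0\le i,j\le n},
\end{equation*}
a global holomorphic function on $Y$, not identically zero by the linear non-degeneracy of $f$.

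The argument then proceeds along familiar algebraic lines. First I would invoke the standard Cartan combinatorial lemma: at each point $y\in Y$, after sorting so that $|L_{\sigma(1)}(f(y))|\le\cdots\le|L_{\sigma(q)}(f(y))|$, the general position hypothesis gives
\begin{equation*}
\sum_{j=1}^q\log\frac{\|f(y)\|}{|L_j(f)(y)|}\le\sum_{i=1}^{n+1}\log\frac{\|f(y)\|}{|L_{\sigma(i)}(f)(y)|}+O(1).
\end{equation*}
Second, for any $(n+1)$-subset $K=\{j_0,\dots,j_n\}\subset\{1,\dots,q\}$, Cartan's algebraic identity $W(L_{j_0}(f),\dots,L_{j_n}(f))=c_K\cdot W$ with $c_K\ne 0$ allows me to rewrite
\begin{equation*}
\frac{\|f\|^{n+1}}{|L_{j_0}(f)\cdots L_{j_n}(f)|}=\frac{\|f\|^{n+1}}{|W|}\cdot\frac{1}{|c_K|}\cdot\Bigl|\det\bigl(\xi^kL_{j_i}(f)/L_{j_i}(f)\bigr)_{i,k}\Bigr|.
\end{equation*}
Taking the maximum over the finitely many such $K$ and combining the two bounds produces a pointwise inequality
\begin{equation*}
\sum_{j=1}^q\log\frac{\|f\|}{|L_j(f)|}\le\log\frac{\|f\|^{n+1}}{|W|}+\sum_K\log^+\Bigl|\det\bigl(\xi^kL_{j_i}(f)/L_{j_i}(f)\bigr)_{i,k}\Bigr|+O(1).
\end{equation*}

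Integrating this against $d\mu_r$ on $S(r)$ and applying the Green--Jensen formula (Lemma~\ref{GJ}) combined with the Poincar\'e--Lelong current of the holomorphic function $W$, the first term on the right contributes $(n+1)T_f(r)-N_W(r,0)+O(1)$ to the bound on $\sum_j m_f(r,H_j)$. For each of the finitely many determinants in the remaining sum, I would expand over permutations $\tau$ of $\{0,1,\dots,n\}$ and estimate each factor $|\xi^{\tau(i)}L_{j_i}(f)/L_{j_i}(f)|$ using Proposition~\ref{prop:ldlpaun}; this is applicable since $T_{L_{j_i}(f)}(r)\le T_f(r)+O(1)$. Summing the coefficients $\tfrac{k}{2}$ coming from the LDL across rows and accounting for the factor-of-two in the paper's normalization of $m_f(r,D)$ yields the stated factor $\tfrac{n(n+1)}{2}$ in front of $-\mathfrak{X}_\sigma(r)+(\delta+2\varsigma)\log r$, once the $(1+\delta)^2\log^+T_f(r)$ pieces are absorbed into a $\delta' T_f(r)$ term via the calculus lemma (Lemma~\ref{lem:calculus}). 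The main obstacle, in my view, is the careful bookkeeping required to ensure the coefficient comes out to $\tfrac{n(n+1)}{2}+\delta'$ and no larger; in particular, one must show that the $\mathfrak{E}_\sigma(r)$ contributions produced by the parabolic LDL can be absorbed (via Lemma~\ref{bnew} and the $\log^+/\log^-$ decomposition of $\log|d\sigma(\xi)|^2$) into the existing $-\mathfrak{X}_\sigma(r)+(\delta+2\varsigma)\log r$ correction without inflating its coefficient, and that the various exceptional sets produced by Lemma~\ref{lem:calculus} and Proposition~\ref{prop:ldlpaun} combine consistently into a single exceptional set of finite measure depending on $\delta$ and $\delta'$.
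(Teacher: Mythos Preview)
Your approach is the classical Cartan route (Wronskian identity plus the logarithmic derivative lemma), whereas the paper follows the Ahlfors--Weyl geometric method: it introduces the associated curves $F_k=\mathbb{P}(\mathbf{f}\wedge\mathbf{f}'\wedge\cdots\wedge\mathbf{f}^{(k)})$, proves a preliminary bound on $T(r)=\sum_k T_{F_k}(r)$ via the Pl\"ucker recursion, and then applies Lemma~\ref{calculus} directly to the pseudo-form $dd^c\log\lambda$ coming from a curvature computation (Theorem~4.6 of \cite{dhr}) for an auxiliary singular metric $\lambda$. No logarithmic derivative lemma is used at all.

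The substantive difference is precisely the obstacle you flag, and it is a genuine gap rather than bookkeeping. Each application of Proposition~\ref{prop:ldlpaun} to a factor $\xi^kL_j(f)/L_j(f)$ contributes $\tfrac{k}{2}\,\mathfrak{E}_\sigma(r)$, and your proposed absorption of $\mathfrak{E}_\sigma(r)$ into $-\mathfrak{X}_\sigma(r)+(\delta+2\varsigma)\log r$ via Lemma~\ref{bnew} does not work: that lemma only says
\[
\int_{S(r)}\log|d\sigma(\xi)|^2\,d\mu_r=-\mathfrak{X}_\sigma(r)+2\varsigma\log r+O(1),
\]
which ties $\log^+|d\sigma(\xi)|^2$ and $\log^-|d\sigma(\xi)|^2$ together but gives no one-sided bound of $\mathfrak{E}_\sigma(r)$ in terms of $-\mathfrak{X}_\sigma(r)+2\varsigma\log r$. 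The paper's method sidesteps this entirely: because it integrates $\log\|\xi\|^2_\eta$ (via Lemma~\ref{calculus}) rather than $\log^+$ of individual logarithmic-derivative factors, no $\log^+/\log^-$ splitting of $|d\sigma(\xi)|^2$ is ever needed, so $\mathfrak{E}_\sigma(r)$ never appears. Your route would yield a correct inequality, but with an extra $\bigl(\tfrac{n(n+1)}{2}+\delta'\bigr)\mathfrak{E}_\sigma(r)$ on the right-hand side---sufficient for the downstream applications (Definition~\ref{nevp} allows $\mathfrak{E}_\sigma(r)$), but strictly weaker than Theorem~\ref{cartan} as stated.
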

\begin{proof}
	We modify the geometric proof of the second main theorem for holomorphic curves (see A3.5 \cite{rubook}) to the parabolic setting. For a local coordinate chart $(U,z)$,
    let  ${\bf f}=(f_0,\dots,f_n): U \rightarrow  
	{\mathbb C}^{n+1} - \{0\}$  be a local reduced representation of $f$, where $f_0,\dots,f_n$ are holomorphic functions on $U$ with no common zeroes.
	For $0\leq k\leq 1$, consider the map  ${\bf F}_k$ defined by  
	$${\bf F}_k(z):= 
	{\bf f}(z) \wedge  {\mathbf f'}(z) 
	\wedge  \cdots  \wedge  {\mathbf f^{(k)}(z)}: U \rightarrow 
	\bigwedge^{k+1}{\mathbb C}^{n+1}.$$
	Identify $\bigwedge^{k+1}{\mathbb C}^{n+1}$ with $\mathbb{C}^{N_k+1}$, where $N_k= {(n+1)!\over (k+1)!(n-k)!} - 1$, and let $P:\mathbb{C}^{N_k+1}\to\mathbb{P}^{N_k}(\mathbb{C})$ be the natural projection.
	Then the {\it $k$-th associate map} $F_k:=P(\mathbf{F}_k)$ is a well-defined holomorphic map from $Y$ to $\PP^{N_k}(\mathbb{C})$.

	Let $\omega_k$ be the Fubini-Study form on ${\mathbb P}^{N_k}({\mathbb C})$ and let  $\Omega_k:=F^*_k\omega_k$ be its pull-back on $Y$. Define the $k$-th characteristic function
	$$
	T_{F_k}(r) := \int_1^r {dt\over t} \int_{B(t)}  \Omega_k = T_{\Omega_k}(r).
	$$
Fix $\delta > 0$, let $T(r):=T_{F_0}(r)+\cdots+T_{F_{n-1}}(r)$. We claim that, for $\delta'>0$,
	\begin{equation}\label{claim} 
	T(r) \leqexc (n(n+1)^2+\delta')T_f(r)+ (n(n+1)^2+\delta')[(\delta + 2\varsigma)\log r -\mathfrak{X}_\sigma(r)]+ O(1). 
	\end{equation}		
Now we prove the claim. Following the notation in (\ref{metric}) for $\|\xi\|_{\Omega_k}$, write
$$S_k(r):=  \int_{S(r)}\log \|\xi\|^2_{\Omega_k} d\mu_r.$$
Then (\ref{eqn:applcalculus}) implies
\begin{equation}
\begin{split}
S_k(r)
&\leqexc (1+\delta)^2\log T_{F_k}(r)  +  (\delta+2\varsigma)\log r - \mathfrak{X}_{\sigma}(r) +O(1)\\
&\leqexc (1+\delta)^2\log T(r)+(\delta+2\varsigma)\log r -\mathfrak{X}_\sigma(r)+O(1).
\end{split}
\end{equation}

On the other hand, by the Pl\"ucker formula (see \cite{dhr}, Lemma 4.1 or \cite{rubook}, Lemma A3.5.1),  $\Omega_k = {
		\|{\bf F}_{k-1}(z)\|^2\|{\bf F}_{k+1}(z)\|^2 \over \|{\bf F}_{k}(z)\|^4} \frac{\sqrt{-1}}{2\pi}dz\wedge d\bar{z}$ in any local coordinate $z$. Choose $z$ such that $\xi = \frac{\partial}{\partial z}$, following the notation of (\ref{metric}), we have, for $0 \leq k \leq n$,
	\begin{equation}\label{eqn:pluker}
	\|\xi\|^2_{\Omega_k}= 2{
		\|{\bf F}_{k-1}\|^2\|{\bf F}_{k+1}\|^2 \over \|{\bf F}_{k}\|^4},
	\end{equation}
note that by convention we have set $\|{\bf F}_{-1}\| \equiv 1$. Let $\nu_k$ be the divisor of zeroes of $\Omega_k$.
Applying $\int_{1}^t\frac{dt}{t}dd^c\log[\;\cdot\;]$ on (\ref{eqn:pluker}) and use the Green-Jensen formula (Proposition \ref{GJ}), one has 
	$$
	N_{\nu_k}(r) + T_{F_{k-1}}(r) -2T_{F_{k}}(r)+
	T_{F_{k+1}}(r) =S_k(r).
	$$
From here, by an induction argument (see the proof of Theorem A3.5.3 in \cite{rubook}), for $0 \leq q \leq p$, we can obtain
	$$
	T_{F_p}(r) + (p-q)T_{F_{q-1}}(r) 
	\leq (p-q+1)T_{F_q}(r) + \sum_{j=q}^{p-1} (p-j) S_j(r) +O(1).
	$$
In particular, by taking $q=0$, $p=k$ and notice that $T_{F_{-1}}(r)\equiv 0$, it gives
\begin{equation}
\begin{split}
T_{F_k}(r) &\leq (k+1)T_f(r) +  \sum_{j=0}^{k-1} (k-j) S_j(r) +O(1)\\
&\leq (k+1)T_f(r)+\frac{k(k+1)}{2}[(1+\delta)^2\log T(r)+(\delta+2\varsigma)\log r -\mathfrak{X}_\sigma(r)]+O(1).
\end{split}
\end{equation}
Hence by enlarging the exceptional set in $\leqexc$ if necessary we get for $\delta'>0$
$$T(r)\leqexc (n(n+1)^2+\delta')T_f(r)+(n(n+1)^2+\delta')[(\delta+2\varsigma)\log r -\mathfrak{X}_\sigma(r)]+O(1).$$
The claim is proved.

	Let $0<\epsilon < \frac{\delta'}{n(n+1)^2+\delta'}$ so that by (\ref{claim})
	\begin{equation}\label{eqn:smt-epsilonTr}
	\epsilon T(r)<\delta' T_f(r)+\delta'[(\delta+2\varsigma)\log r -\mathfrak{X}_\sigma(r)] + O(1).
	\end{equation} 
	Let $\mu > 0$ and define
$$
	\lambda:=  {\prod_{k=0}^{n-1}\|{\bf F}_k\|^{2\epsilon}\over \prod_{1\leq j\leq q, 0\leq k\leq n-1}\log^2 (\mu/\phi_k(H_j))},$$
where $\phi_k(H_j)$ are the contact functions (see for example \cite{dhr}, page 14). By a curvature computation (see Theorem 4.6 in \cite{dhr}),   we have
		\begin{equation}\label{eqn:ddch}
		dd^c \log \lambda \ge C \left({\|{\bf F}_0\|^{2(q-(n+1))}\|{\bf F}_n\|^2 \cdot \lambda \over 
			(\|{\bf F}_0\|\cdots \|{\bf F}_{n-1}\|)^{2\epsilon}
			\prod_{j=1}^q |{\bf F}_0(H_j)|^2} \right)^{{2\over n(n+1)}} dd^c|z|^2.
			\end{equation}
In terms of the  notation in (\ref{metric}), 
the above inequality implies that 
	$$\|\xi\|^2_{dd^c \log \lambda} \ge C \left({\|{\bf F}_0\|^{2(q-(n+1))}\|{\bf F}_n\|^2 \cdot \lambda\over 
		(\|{\bf F}_0\|\cdots \|{\bf F}_{n-1}\|)^{2\epsilon}
		\prod_{j=1}^q |{\bf F}_0(H_j)|^2} \right)^{{2\over n(n+1)}},$$
	for some (new) constant $C>0$.
 	Hence,  by the definition and the  Green-Jensen formula (Proposition \ref{GJ}), 
	\begin{equation}\label{eqn:smt-rhs}
	\begin{split}
		{n(n+1)\over 2} \int_{S(r)}  \log \|\xi\|^2_{dd^c \log \lambda}  d\mu_r 
		&\geq \sum_{n=1}^q m_f(r, H_j) - (n+1) T_f(r) +N_W(r, 0)\\
		&\qquad- \epsilon T(r) +   \int_{S(r)}   \log \lambda d\mu_r.
	\end{split}
	\end{equation}
	On the other hand, by (\ref{eqn:applcalculus}) and the Green-Jensen formula (Proposition \ref{GJ}),  
	\begin{equation}\label{eqn:smt-lhs}
	\begin{split}
	\int_{S(r)}  \log \|\xi\|^2_{dd^c \log \lambda}d\mu_r   
	&\leqexc (1+\delta)^2\log  T_{dd^c \log \lambda}(r) +  (\delta+2\varsigma) \log r - \mathfrak{X}_{\sigma}(r)+O(1) \\
	&\leqexc  (1+\delta)^2\log \left(  \int_{S(r)}  \log \lambda d\mu_r \right) + (\delta+2\varsigma)\log r -\mathfrak{X}_{\sigma}(r) +O(1). 
	\end{split}
	\end{equation}
	The proof is finished by observing that
	$$  C_0 \log \left(\int_{S(r)}  \log \lambda d\mu_r \right) -   \int_{S(r)}   \log \lambda d\mu_r$$
	is bounded from above for any $C_0>0$, and combining (\ref{eqn:smt-rhs}), (\ref{eqn:smt-lhs}) with (\ref{eqn:smt-epsilonTr}).
	\end{proof}	

Let $L_1, \dots, L_q$ be the linear forms defining $H_1, \dots, H_q$. For each fixed $y\in Y$, 
	we rearrange the index of $L_1,\dots,L_q$ such that $\ord_{y}(L_1\circ f)\geq \cdots \geq \ord_{y}(L_{q_0}\circ f)\geq n$ ($q_0$ may not exist). Then $\ord_{y}W=\sum_{j=1}^{q_0}(\ord_{y}(f^*H_j)-n)= \sum_{j=1}^q\ord_{y}(f^*H_j)-n)^+$.
Hence 
	$$\sum_{j=1}^qN_f(r,H_j)-N_W(r,0)\leq \sum_{j=1}^qN_f^{[n]}(r,H_j).$$ 
Therefore Theorem \ref{cartan}	and the First Main Theorem imply the following inequality
	\begin{equation}\label{eqn:truncatesmt} 
	\begin{split}
(q-(n+1)-\delta')T_f(r)
&\leqexc \nonumber \sum_{j=1}^qN^{[n]}_f(r,H_j)\\
&\qquad+ \left(\frac{n(n+1)}{2}+\delta'\right)[ -\mathfrak{X}_\sigma(r)+(\delta+2\varsigma)\log r]+O(1).
	\end{split}
	\end{equation}

A linear relation $c_1L_1+\cdots+c_mL_m=0$ is called {\it minimal} provided that all proper subsets of $\{L_1,\dots,L_m\}$ are linearly independent. Note that any linear relation can be reduced to a linear combination of minimal linear relations. We also note that the $L_1, \dots, L_q$, are always pairwisely  linearly independent because the hyperplanes are distinct, hence necessarily $3\le m\le n+2$. Following \cite{ru95}, the idea to prove Theorem \ref{hyper1} is to observe that a minimal linear relation yields a Second-Main-Theorem type inequality (see Lemma \ref{lem:hyp-minimalrelation}) and that, if $\mathcal H$ is non-degenerate, any two hyperplanes in $\mathcal H$ are connected by a chain of minimal linear relations.

Let ${\bf f} = (f_0,\dots,f_n)$ be a local  reduced representation of $f$. Since different reduced representations differ by a holomorphic function with no zero, $\frac{L_i\circ \mathbf{f}}{L_j\circ \mathbf{f}}$ is a well-defined meromorphic function on $Y$ for any $L_i,L_j\in \mathcal L$. 

\begin{lemma}\label{lem:hyp-minimalrelation}
Let $R$ be a minimal linear relation given by $c_1L_1+\cdots+c_uL_{u+1}=0$. Then, for any $\alpha,\beta\in \{1,\dots,u+1\}$ with $\alpha \neq \beta$, 
$$T_{\frac{L_\alpha\circ \mathbf{f}}{L_\beta\circ \mathbf{f}}}(r)\leqexc\sum_{j=1}^q2 N_f^{[n]}(r,H_j)+(n^2+n+2)[-\mathfrak{X}_\sigma(r)+(\delta+2\varsigma)\log r ]+O(1).$$ 
\end{lemma}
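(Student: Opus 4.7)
The plan is to reduce the statement to an application of the truncated Second Main Theorem (\ref{eqn:truncatesmt}) for a lower-dimensional auxiliary map constructed from the minimal relation. Concretely, since $c_1L_1+\cdots+c_{u+1}L_{u+1}=0$, the map
$$
g:=[L_1\circ \mathbf{f}:\cdots:L_{u+1}\circ \mathbf{f}]:Y\longrightarrow \mathbb{P}^{u}(\mathbb{C})
$$
takes values in the hyperplane $\Pi:=\{c_1z_1+\cdots+c_{u+1}z_{u+1}=0\}\subset \mathbb{P}^{u}(\mathbb{C})$, which is isomorphic to $\mathbb{P}^{u-1}(\mathbb{C})$. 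The first step is to justify that after the standard reduction we may assume $f$ is linearly nondegenerate in $\mathbb{P}^n(\mathbb{C})$ (if not, we replace $\mathbb{P}^n$ by the smallest linear subspace containing $f(Y)$ and adjust the hyperplanes accordingly); then $g$ is automatically linearly nondegenerate in $\Pi$, since any extra linear dependence among $L_1\circ \mathbf{f},\dots, L_{u+1}\circ \mathbf{f}$ would give a second linear relation contradicting the minimality of $R$.

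Next, I would verify that the traces $H'_j:=\Pi\cap\{z_j=0\}$ of the $u+1$ coordinate hyperplanes of $\mathbb{P}^u$ lie in general position inside $\Pi\cong \mathbb{P}^{u-1}$. Indeed, for any $u$ indices $j_1,\dots,j_u$, minimality forces $L_{j_1},\dots,L_{j_u}$ to be linearly independent, and a small linear-algebra check then shows that $H'_{j_1}\cap\cdots\cap H'_{j_u}=\emptyset$ in $\Pi$, which is precisely general position. With these ingredients in place, inequality (\ref{eqn:truncatesmt}) applied to $g:Y\to \mathbb{P}^{u-1}(\mathbb{C})$ with the $q':=u+1$ hyperplanes $H'_1,\dots,H'_{u+1}$ (dimension $n'=u-1$, so $q'-n'-1=1$) gives, for any $\delta'>0$,
$$
(1-\delta')T_g(r)\leqexc \sum_{j=1}^{u+1} N_g^{[u-1]}(r,H'_j) + \left(\tfrac{u(u-1)}{2}+\delta'\right)\bigl[-\mathfrak{X}_\sigma(r)+(\delta+2\varsigma)\log r\bigr]+O(1).
$$
Because the divisor of zeros of $L_j\circ\mathbf{f}$ is the same whether pulled back through $f$ or through $g$, and $u-1\leq n$, we have the termwise bound $N_g^{[u-1]}(r,H'_j)\leq N_f^{[n]}(r,H_j)$, and of course the sum over $j=1,\dots,u+1$ is bounded by the sum over $j=1,\dots,q$. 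Taking $\delta'=\tfrac{1}{2}$ and absorbing the resulting constant factor yields
$$
T_g(r)\leqexc 2\sum_{j=1}^{q} N_f^{[n]}(r,H_j)+(n^2+n+2)\bigl[-\mathfrak{X}_\sigma(r)+(\delta+2\varsigma)\log r\bigr]+O(1),
$$
using $u(u-1)\le n(n+1)=n^2+n$ together with a safe rounding of the lower-order term.

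The final step is to compare $T_{L_\alpha\circ\mathbf{f}/L_\beta\circ\mathbf{f}}(r)$ with $T_g(r)$. Since the meromorphic function $L_\alpha\circ\mathbf{f}/L_\beta\circ\mathbf{f}$ represents the composition of $g$ with the linear projection $\pi_{\alpha\beta}:\mathbb{P}^{u}(\mathbb{C})\dashrightarrow \mathbb{P}^1(\mathbb{C})$, $[z_1:\cdots:z_{u+1}]\mapsto [z_\alpha:z_\beta]$, the pointwise bound $\|\pi_{\alpha\beta}(\mathbf{g})\|\leq \|\mathbf{g}\|$ on coordinates gives, after integrating against $d\mu_r$ and invoking the Green--Jensen formula, the standard inequality $T_{L_\alpha\circ\mathbf{f}/L_\beta\circ\mathbf{f}}(r)\leq T_g(r)+O(1)$. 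Combining this with the previous display produces the stated bound. The main obstacle is the nondegeneracy reduction that allows Theorem \ref{cartan} to apply to $g$: although minimality of $R$ takes care of degeneracies among $L_1,\dots,L_{u+1}$, one must be careful that the passage from $f$ to $g$ preserves ``linear nondegeneracy'' of the relevant data, and that the book-keeping of truncation levels ($u-1\leq n$) and general-position hyperplanes in $\Pi$ survives intact — this is precisely where the minimality hypothesis of the relation $R$ is used crucially.
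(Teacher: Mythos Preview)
Your proposal is correct and follows essentially the same route as the paper. The paper defines the auxiliary map directly into $\mathbb{P}^{u-1}(\mathbb{C})$ via $z\mapsto [c_1(L_1\circ\mathbf f)(z):\cdots:c_u(L_u\circ\mathbf f)(z)]$ (after reindexing so that $\alpha,\beta\in\{1,\dots,u\}$), whereas you map into $\mathbb{P}^u(\mathbb{C})$ and restrict to the hyperplane $\Pi$; these are the same construction up to a linear change of coordinates, and both proofs then apply (\ref{eqn:truncatesmt}) with the $u+1$ hyperplanes in general position, take $\delta'=\tfrac12$, and use $u\le n+1$ together with $T_{L_\alpha\circ\mathbf f/L_\beta\circ\mathbf f}(r)\le T_g(r)+O(1)$ to conclude.
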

\begin{proof}
By rearranging the index if necessary we can assume $\alpha = 1$ and $2\leq \beta \leq u$.
	Let $g_{R}:Y\to \mathbb{P}^{u-1}(\mathbb{C})$ be the holomorphic map defined by 
	$$z\mapsto [c_1(L_1\circ{\bf f})(z),...,c_{u}(L_u\circ{\bf f})(z)].$$
	Let $H'_1:=\{w_1=0\}, \dots, H'_u=\{w_u=0\}, H'_{u+1}=\{c_1w_1+\cdots + c_uw_u=0\}$ be hyperplanes on $\mathbb{P}^{u-1}(\mathbb{C})$.
	Since $R$ is minimal, $g_R$ is linearly non-degenerate. Note that $H_1',\dots,H'_{u+1}$ are in general position. Let $\delta > 0$, by applying Theorem \ref{cartan} (more precisely, (\ref{eqn:truncatesmt})) to $g_{R}$ and $H_1',\dots,H_{u+1}'$ with $\delta'=\frac{1}{2}$, we get
	\begin{equation}\label{3.5}
	\begin{split}
	   T_{\displaystyle{\frac{c_{\beta} L_{\beta}({\bf f})}{c_1
					L_1({\bf f})}}}(r)
		&\leq T_{g_R}(r) \\
		&\leqexc\frac{1}{1-\delta'}\sum_{t=1}^uN_{c_t L_t({\bf f})}^{[u-2]}(r,
		0)+\frac{ \frac{(u-1)u}{2}+\delta' }{1-\delta'} [- \mathfrak{X}_{\sigma}(r)+(\delta+2\varsigma)\log r]+O(1)\\
		&\leqexc2\sum_{t=1}^uN_{L_t({\bf f})}^{[u-2]}(r, 0)+
		(u^2-u+1)[- \mathfrak{X}_{\sigma}(r)+(\delta+2\varsigma)\log r]+O(1)\\
		&\leqexc2 \sum_{t=1}^qN_{L_t({\bf f})}^{[n]}(r, 0)+
		(n^2+n+1)[- \mathfrak{X}_{\sigma}(r)+(\delta+2\varsigma)\log r]+O(1),
	\end{split}
	\end{equation}
	where the last equality holds because $u\leq n+1$.
\end{proof}

\begin{lemma}\label{lem:nondeg}
	If $\mathcal{H}:=\{H_1,\dots,H_q\}$ is non-degenerate, then there exist $n+1$ linearly independent linear forms
	$L_{i_1},\dots, L_{i_{n+1}}$ in
	$\mathcal L$ such that, for $\delta > 0$,
		\begin{equation}\label{3.3}
			\begin{split}
		T_{\displaystyle{\frac{L_{i_\alpha}({\bf f})}
			{L_{i_1}({\bf f})}}}(r)  &\le_{exc}  (n-1) \left(2\sum_{j=1}^q
		N_f^{[n]}(r, H_j)
		+ (n^2+n+2)[- \mathfrak{X}_{\sigma}(r)+(\delta+2\varsigma)\log r]\right)\\
&~		 +O(1),
\end{split}
		\end{equation} 
	for $2\le \alpha\le n+1$.
	\end{lemma}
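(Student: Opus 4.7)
The plan is to build $n+1$ linearly independent forms $L_{i_1},\dots,L_{i_{n+1}}\in\mathcal L$ together with a chain of minimal linear relations linking them, and then apply Lemma \ref{lem:hyp-minimalrelation} along the chain, telescoping the resulting estimates. The non-degeneracy hypothesis is the precise tool that guarantees such a chain exists.

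First I would construct the sequence inductively. Pick any $L_{i_1}\in\mathcal L$. Having selected linearly independent $L_{i_1},\dots,L_{i_{\alpha-1}}$ with $2\le\alpha\le n+1$, set $W_{\alpha-1}:=\mathrm{span}(L_{i_1},\dots,L_{i_{\alpha-1}})$ and $\mathcal L'_{\alpha-1}:=\mathcal L\cap W_{\alpha-1}$. Since $\dim W_{\alpha-1}=\alpha-1\le n<n+1=\dim\mathrm{span}(\mathcal L)$, $\mathcal L'_{\alpha-1}$ is a proper non-empty subset of $\mathcal L$. The non-degeneracy condition yields an $L^{\ast}\in\mathcal L$ lying in both $(\mathcal L'_{\alpha-1})$ and $(\mathcal L\setminus \mathcal L'_{\alpha-1})$; in particular $L^{\ast}\in\mathcal L'_{\alpha-1}$ and $L^{\ast}$ admits an expression as a non-trivial linear combination of elements of $\mathcal L\setminus\mathcal L'_{\alpha-1}$. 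Reducing this expression gives a minimal linear relation $R_\alpha$ that involves $L^{\ast}\in W_{\alpha-1}$ together with at least one form, which we designate $L_{i_\alpha}$, from $\mathcal L\setminus W_{\alpha-1}$. Then $L_{i_1},\dots,L_{i_\alpha}$ remain linearly independent, and the single relation $R_\alpha$ directly links $L_{i_\alpha}$ to a form inside $W_{\alpha-1}$.

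Next I would apply Lemma \ref{lem:hyp-minimalrelation} to each $R_\alpha$ to obtain
$$T_{L_{i_\alpha}(\mathbf f)/L^{\ast}(\mathbf f)}(r)\leqexc \mathcal{B}(r):=2\sum_{j=1}^q N_f^{[n]}(r,H_j)+(n^2+n+2)\bigl[-\mathfrak{X}_\sigma(r)+(\delta+2\varsigma)\log r\bigr]+O(1).$$
Since $L^{\ast}=\sum_{\beta<\alpha}c_\beta L_{i_\beta}$, the standard inequalities $T_{u+v}(r)\le T_u(r)+T_v(r)+O(1)$ and $T_{u/w}(r)\le T_{u/v}(r)+T_{v/w}(r)+O(1)$ for characteristic functions of meromorphic functions on a parabolic Riemann surface allow one to compare $T_{L^{\ast}/L_{i_1}}(r)$ with the previously established bounds on $T_{L_{i_\beta}/L_{i_1}}(r)$. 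Combining with the above estimate for $T_{L_{i_\alpha}/L^{\ast}}(r)$ gives an inductive control of $T_{L_{i_\alpha}(\mathbf f)/L_{i_1}(\mathbf f)}(r)$ in terms of $\mathcal B(r)$.

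The main obstacle is ensuring that the accumulated coefficient stays at most $n-1$. A naive telescoping, allowing $L^{\ast}$ to be an arbitrary combination $\sum c_\beta L_{i_\beta}$, risks an exponential blow-up in $\alpha$. To avoid this, one refines the selection so that at each step $L^{\ast}$ is (or may be taken to be) one of the previously chosen $L_{i_\beta}$'s, i.e.\ the relation $R_\alpha$ directly couples $L_{i_\alpha}$ with some $L_{i_\beta}$ ($\beta<\alpha$). The chain then becomes a rooted tree whose depth is the relevant telescoping length. Since every minimal linear relation contains at least three forms, the initial step can install two independent new forms at depth one, so the resulting tree on $n+1$ vertices has depth at most $n-1$; hence the telescope accumulates at most $n-1$ copies of $\mathcal B(r)$, yielding the desired bound for $2\le\alpha\le n+1$.
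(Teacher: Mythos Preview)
Your overall strategy matches the paper's: grow a chain of minimal relations out of a single base form and telescope the estimates from Lemma~\ref{lem:hyp-minimalrelation}. The place where your argument breaks is the ``refinement'' paragraph. You assert that at stage $\alpha$ the bridge form $L^{\ast}$ produced by the non-degeneracy hypothesis ``is (or may be taken to be) one of the previously chosen $L_{i_\beta}$'s.'' But non-degeneracy, applied to $\mathcal{L}'_{\alpha-1}=\mathcal{L}\cap W_{\alpha-1}$, only yields $L^{\ast}\in\mathcal{L}\cap W_{\alpha-1}$; this set can be much larger than $\{L_{i_1},\dots,L_{i_{\alpha-1}}\}$, and nothing forces $L^{\ast}$ to be one of your distinguished forms. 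Without that, you are back to $L^{\ast}=\sum_{\beta<\alpha}c_\beta L_{i_\beta}$ and the exponential blow-up you yourself flagged. The tree picture and the depth count $\le n-1$ are correct \emph{once} such a direct link is available, but you have not shown it is.

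The paper fixes exactly this point by changing what is tracked. Instead of choosing $n+1$ forms in advance, it sets $\mathcal{L}_1$ to be all forms appearing in one minimal relation through the base form, and then inductively $\mathcal{L}_{k}:=\mathcal{L}\cap(\mathcal{L}_{k-1})$, the set of \emph{all} forms of $\mathcal{L}$ lying in the span of the previous stage. For any $L\in\mathcal{L}_k\setminus\mathcal{L}_{k-1}$ one has $L\in(\mathcal{L}_{k-1})$, hence a relation $L=\sum c_jL_j$ with each $L_j\in\mathcal{L}_{k-1}$; reducing it to a minimal one keeps $L$ and some $L_j\in\mathcal{L}_{k-1}$, so Lemma~\ref{lem:hyp-minimalrelation} plus the stage-$(k-1)$ bound gives $T_{L(\mathbf f)/L_1(\mathbf f)}(r)\leqexc k\,\mathcal B(r)+O(1)$. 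The chain $\mathcal{L}_1\subsetneq\mathcal{L}_2\subsetneq\cdots$ reaches enough independent forms in at most $n-1$ steps because $\#\mathcal{L}_1\ge3$. To repair your argument, replace the unjustified refinement by this ``track the whole span'' device; then the telescoping length really is bounded by the depth in the chain and the coefficient $n-1$ follows.
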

	\begin{proof}
	Choose an arbitrary $L_1\in \mathcal{L}$ and let $R_1$ be a minimal linear relation containing $L_1$. Let $\mathcal{L}_1$ be the set of linear forms appeared in $R_1$. Then for any $L_{i_1}\in \mathcal L_1$, Lemma \ref{lem:hyp-minimalrelation} implies
	$$T_{\displaystyle\frac{L_{i_1}\circ \mathbf{f} }{L_1\circ \mathbf{f} } } 
	\leqexc \sum_{j=1}^q2 N_f^{[n]}(r,H_j)+(n^2+n+2)[-\mathfrak{X}_\sigma(r)+(\delta+2\varsigma)\log r ]+O(1).
	$$
	If $\# \mathcal{L}_1=n+2$, we are done. Otherwise let $\mathcal{L}_2:=\mathcal L\cap (\mathcal L_1)$. Since $\mathcal{H}$ is non-degenerate, at least one element of $\mathcal{L}_2$ belong to $(\mathcal L\setminus \mathcal L_1)$. Then for any element $L_{i_2}$ in $\mathcal L_2$, either $L_{i_2}$ is in $\mathcal L_1$, or $L_{i_2}$ satisfies a minimal linear relation involving an element $L_{i_1}$ in $\mathcal L_1$. It sufficies to consider the latter case. By Lemma \ref{lem:hyp-minimalrelation},
	$$T_{\displaystyle\frac{L_{i_2}\circ \mathbf{f} }{L_{i_1}\circ \mathbf{f} } } 
	\leqexc \sum_{j=1}^q2 N_f^{[n]}(r,H_j)+(n^2+n+2)[-\mathfrak{X}_\sigma(r)+(\delta+2\varsigma)\log r ]+O(1).
	$$
	Observe that $\frac{L_{i_2\circ \mathbf{f} }}{L_1\circ \mathbf{f}}
	=\frac{L_{i_2\circ \mathbf{f} }}{L_{i_1}\circ \mathbf{f}} \cdot
	\frac{L_{i_1\circ \mathbf{f} }}{L_1\circ \mathbf{f}}$, hence
	$$T_{\displaystyle \frac{L_{i_2}\circ \mathbf{f} }{L_{1}\circ \mathbf{f} } } 
	\leqexc 2\left( \sum_{j=1}^q2N_f^{[n]}(r,H_j)+(n^2+n+2)[-\mathfrak{X}_\sigma(r)+(\delta+2\varsigma)\log r ] \right)+O(1).
	$$
	If $\#\mathcal{L}_2=n+2$, then we are done. Otherwise, note that $\mathcal{L}_2\supsetneq \mathcal{L}_1$. Inductively, since $\dim(\mathcal L)=n+1$, we obtain a finite sequence $\mathcal{L}_s\supsetneq \mathcal{L}_{s-1}\supsetneq \cdots \supsetneq \mathcal{L}_1$ with $\#\mathcal L_{s}=n+2$, and for any $L_{i_s}\in \mathcal{L}_s$, 
	$$T_{\displaystyle \frac{L_{i_s}\circ \mathbf{f} }{L_{1}\circ \mathbf{f} } } 
	\leqexc s\left( \sum_{j=1}^q2 N_f^{[n]}(r,H_j)+(n^2+n+2)[-\mathfrak{X}_\sigma(r)+(\delta+2\varsigma)\log r ] \right)+O(1).
	$$
	The proof is finished by noting that $\#\mathcal L_1\geq 3$ and hence $s\leq n-1$. 
	\end{proof}

\noindent{\it Proof of Theorem \ref{hyper1}}.

	By Lemma \ref{lem:nondeg}, there are $n+1$ linearly independent linear forms $L_{i_1}, \dots, L_{i_{n+1}} \in {\mathcal L}$ such that (\ref{3.3}) holds. Hence 
	\begin{equation}\label{3.11}
	\begin{split}
&~	T_f(r)
	\leq \sum_{j=2}^{n+1} T_{\displaystyle{\frac{L_{i_j}({\bf f})}{L_{i_1}({\bf f})}}}(r)  \\
	&\leqexc n(n-1) \left(2\sum_{j=1}^q
		N_f^{[n]}(r, H_j)
		+ (n^2+n+2)[(\delta+2\varsigma)\log r - \mathfrak{X}_{\sigma}(r)]\right) +O(1)\\
	&\leqexc2n^2(n-1)\overline{N}_{f}(r,|\mathcal H|)+n(n-1)(n^2+n+2)[
		(\delta+2\varsigma)\log r -\mathfrak{X}_\sigma(r)
		]+O(1).
	\end{split}
	\end{equation}
	This shows that if ${\mathcal H}$ is non-degenerate, then $(\mathbb{P}^n(\mathbb{C}),|\mathcal H|)$ is a Nevanlinna pair.

\section{Nevanlinna pair and Algebraic hyperbolicity}

The concept of algebraic hyperbolicity for a compact complex manifold $X$ was
introduced by Demailly in \cite{Dem}, Definition 2.2, and he proved (see \cite{Dem},  Theorem 2.1) that
$X$ is algebraically hyperbolic if it is Kobayashi hyperbolic. The notion of algebraic hyperbolicity
was generalized to the case of log-pairs $(X, D)$ by Chen \cite{chen}.
According to Chen,  $(X, D)$ is said to be  {\it algebraically hyperbolic}
if there exists a positive $(1,1)$-form $\omega$ on $X$ such that 
 for any compact Riemann surface $R$ and every holomorphic map 
$f: R\rightarrow X$ with $f(R)\not\subset D$, one has
$$\int_R f^*\omega \leq \bar{n}_f(D)+\max\{0, 2g-2\},$$ 
where $\bar{n}_f(D)$ is the number of points of $f^{-1}(D)$ on $R$ and $g$ is the genus of $R$.

Unlike Demailly's theorem (as well as the theorem of Pacienza-Rousseau \cite{pacienza2007logarithmic} for log-pairs $(X, D)$), it is unclear whether  Kobayashi hyperbolicity
or Picard hyperbolicity of $X\setminus D$ will imply the algebraic hyperbolicity of $(X,D)$. 
 However, we prove that $(X,D)$ is algebraically hyperbolic if $(X,D)$ is a Nevanlinna pair, which is one of the main points and motivations of this paper.
 \begin{theorem}\label{alg} { If $(X, D)$ is a Nevanlinna pair, then $(X,D)$ is  algebraically hyperbolic.}
\end{theorem}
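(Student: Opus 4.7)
The plan is to deduce the algebraic hyperbolicity inequality by applying the Nevanlinna pair inequality to $f$ restricted to suitably chosen non-compact parabolic open subsets $Y\subset R$, then absorbing a small residual error by scaling the $(1,1)$-form. Let $\eta$ be the positive form provided by the Nevanlinna pair definition, and set $\omega := \eta/2$; writing $A := \int_R f^*\eta$ and $N := \bar{n}_f(D)$, the goal reduces to $A/2 \leq N + \max\{0, 2g - 2\}$.

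The principal case is $N \geq 1$: take $Y := R \setminus f^{-1}(D)$, so that $\chi(Y) = 2 - 2g - N$ and $f(Y) \subset X \setminus D$. By Riemann--Roch there is a meromorphic function $u$ on $R$ whose pole divisor is supported on $f^{-1}(D)$, of some total degree $d$; equip $Y$ with the parabolic exhaustion $\sigma := |u|^\alpha$ for small $\alpha > 0$, so that $\varsigma = \alpha d/2$ and $\mathfrak{X}_\sigma(r) = (2-2g-N)\log r + O(1)$. The nowhere vanishing vector field $\xi$ on $Y$, which as a meromorphic section of $T_R$ has total degree $2-2g$ concentrated on $f^{-1}(D)$, can be chosen (by placing all its polar/zero orders at points of $f^{-1}(D)$) so that $|d\sigma(\xi)|^2 \to \infty$ on $S(r)$ and hence $\mathfrak{E}_\sigma(r) = O(1)$. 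Since $\overline{N}_{f|_Y}(r, D) = 0$ identically while $T_{f, \eta}(r) = A \log r + O(1)$, dividing the Nevanlinna pair inequality by $\log r$ and letting $r \to \infty$ produces $A \leq (N + 2g - 2) + \delta + \alpha d$; the infimum over $\alpha, \delta > 0$ gives $A \leq N + 2g - 2$, which in every genus range is stronger than the required $A/2 \leq N + \max\{0, 2g-2\}$.

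The case $N = 0$ requires separate treatment since $R \setminus f^{-1}(D) = R$ is then compact. For $g = 0$ or $g = 1$, the inclusion $\mathbb{C} = \mathbb{P}^1 \setminus \{\infty\} \hookrightarrow R$ (respectively the universal cover $\pi \colon \mathbb{C} \to R$) composed with $f$ is a holomorphic map $\mathbb{C} \to X \setminus D$, which must be constant by the Brody hyperbolicity of $X\setminus D$ already implied by the Nevanlinna pair property (as observed right after Definition \ref{nevp}); hence $f$ is constant and $A = 0$. The most delicate case is $N = 0$ with $g \geq 2$, where no non-constant holomorphic map $\mathbb{C} \to R$ exists, so one must fall back on $Y = R \setminus \{p\}$ for an arbitrary $p \in R$: now $\chi(Y) = 1 - 2g$ rather than $2 - 2g$, and the identical argument yields only $A \leq 2g - 1$. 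This off-by-one, a structural consequence of any non-empty parabolic $Y \subset R$ being forced to omit at least one point, is the main technical obstacle; it is absorbed precisely by the scaling $\omega = \eta/2$, since $(2g - 1)/2 \leq 2g - 2$ holds for every $g \geq 2$.
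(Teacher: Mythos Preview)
Your overall strategy (pass to a parabolic open subset of $R$, apply the Nevanlinna-pair inequality, divide by $\log r$) is the same as the paper's, and your treatment of the cases $N=0$ with $g\le 1$ via Brody hyperbolicity is exactly what the paper does.  The difference is that in the principal case you remove all of $f^{-1}(D)$ and try to recover $\bar n_f(D)$ from the Euler-characteristic term, whereas the paper removes a \emph{single} point $Q$ with $f(Q)\notin D$ and lets $\overline N_f(r,D)$ appear directly.

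There is, however, a genuine gap in your $\alpha$-trick.  Rescaling $\sigma=|u|$ to $\sigma=|u|^\alpha$ does shrink $\varsigma$ to $\alpha d/2$, but it does \emph{not} keep $\mathfrak E_\sigma(r)=O(1)$.  Already in the simplest instance $g=0$, $N=1$, so $Y=\mathbb C$, $u(z)=z$, $\sigma=|z|^\alpha$, one computes with $\xi=\partial_z$ that $|d\sigma(\xi)|=\tfrac{\alpha}{2}|z|^{\alpha-1}=\tfrac{\alpha}{2}r^{(\alpha-1)/\alpha}\to 0$ on $S(r)$, whence $\mathfrak E_\sigma(r)=(1-\alpha)\log r+O(1)$; the sum $2\varsigma\log r+\mathfrak E_\sigma(r)=\log r+O(1)$ is independent of $\alpha$.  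No other choice of $\xi$ helps here: any nowhere-vanishing holomorphic vector field on $\mathbb C$ is $e^h\partial_z$ with $h$ entire, and $|e^{-h}|\le C|z|^{\alpha-1}\to 0$ in every direction would force $e^{-h}\equiv 0$.  More generally, by Lemma~\ref{bnew},
\[
-\mathfrak X_\sigma(r)+2\varsigma\log r+\mathfrak E_\sigma(r)\;=\;\int_{S(r)}\log^+|d\sigma(\xi)|^2\,d\mu_r+O(1),
\]
so the three terms are coupled and cannot be minimised independently.  Your parenthetical justification (``placing all polar/zero orders of $\xi$ at points of $f^{-1}(D)$'') presupposes that $\xi$ extends meromorphically to $R$ with divisor supported on $f^{-1}(D)$; this is the linear-equivalence condition $\sum e_ip_i\sim -K_R$, which fails for generic point configurations once $g\ge 2$ (e.g.\ $N=1$ and $p_1$ not a Weierstrass point), and even when it holds one needs all $e_i\le 1$ for small $\alpha$, which may be incompatible with $\sum e_i=2-2g$.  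The paper avoids this by taking $\alpha=1$: with $Y=R\setminus\{Q\}$ and $\sigma=|\psi|$ for a Riemann--Roch function $\psi$ with a single pole of order $\le g+1$ at $Q$, one gets $\varsigma\le(g+1)/2$ outright, $\mathfrak E_\sigma(r)=0$ for large $r$, and the bound $\int_Rf^*\eta\le \bar n_f(D)+3g$; the final rescaling is then $\omega=\eta/6$.
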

\begin{proof}
Let $R$ be a compact Riemann  surface with genus $g$ and $f: R\rightarrow X$ be holomorphic map with $f(R)\not\subset D$. We need to show that 
\begin{equation}\label{eqn:alghyp}
\int_R f^*\omega \leq \bar{n}_f(D)+\max\{0, 2g-2\}
\end{equation}
for a positive $(1,1)$-form $\omega$ on $X$ that is independent of $R$ and $f$.

Fix a point $Q\in R$ such that $f(Q)\not\in \Supp(D)$. 
We view $Q$ as a divisor on $R$ with degree $1$. 
Let $L(kQ)$ be the vector space of meromorphic functions $\psi$ on R such that either $\psi$ is a constant or $(\psi) + kQ \geq 0$, i.e.,  $\psi$ has only a pole at $Q$ with order less than or equal to $k$.  
By the Riemann-Roch Theorem$$\dim L(kQ)- \dim L(kQ-K) = k - g + 1,$$ where $K$ is the canonical divisor on $R$. 
This implies that $\dim L((g+1)Q)\ge 2$,
so we can choose a non-constant meromorphic function $\psi$ on $R$ with a single pole at $Q$ of order less than or equal to $g + 1$.  Then $\sigma:=|\psi|$ is a parabolic exhaustion function for the open parabolic Riemann surface $R\backslash \{Q\}$. 
By the Poincar\'e-Lelong and Stokes' formula, for $r>1$ such that all zeros of $\psi$ are inside  $B(r)$, 
$$g+1\geq \sum_{p\in B(r)} \text{ord}_p \psi = \int_{B(r)} dd^c [\log |\psi|^2] = 2\int_{S(r)} d^c \log |\psi| = 2\int_{S(r)} d^c\log\sigma =2\varsigma.$$
Hence $\varsigma \leq{g+1\over 2}$. 
Since $(X, D)$ is a  Nevanlinna pair, there exists a positive $(1,1)$-form $\eta$ on $X$ such that 
\begin{equation}\label{algrunew}
T_{f, \eta}(r)\leqexc \overline{N}_f(r, D)- \mathfrak{X}_{\sigma}(r) + (\delta+g+1) \log r + \mathfrak{E}_{\sigma}(r)+O(1),\end{equation}
where we used the fact $\varsigma \leq{g+1\over 2}$. 
Also, since $\psi$ has a pole only at $Q$, $\psi':= d\psi(\xi)$ also has a pole only at $Q$ (otherwise $\psi'$ would be a constant). 
Hence $\mathfrak{E}_{\sigma}(r): = \int_{S(r)}\log^- |d\sigma(\xi)|^2 d\mu_r=0$ for $r$ big enough. 	
Therefore, from (\ref{algrunew}) we can	take a sequence $r_n\rightarrow +\infty$ such that $$T_{f, \eta}(r_n)\leq \overline{N}_f(r_n, D) - \mathfrak{X}_{\sigma}(r_n)+ (\delta+g+1)\log r_n+O(1).$$ 
Now recall that 
$$\mathfrak{X}_{\sigma}(r)=\int_{1}^r \chi_{\sigma}(t){dt\over t},$$ where $\chi_{\sigma}(t)$ is the Euler characteristic of the domain $B(t)$.
Hence, 
$$\lim_{r\rightarrow \infty} {\mathfrak{X}_{\sigma}(r)\over \log r}=\chi(R-\{p\})=\chi(R)-1=1-2g.$$
Let 
	$$A(r):= \int_{B(r)} f^*\eta.$$ 
	Then, for any fixed $r$, when $r_n>r$
\begin{eqnarray*}
 	A(r) &\leq& {1\over \log r_n-\log r} \int_r^{r_n} A(t) {dt\over t} 
 	\leq {1\over \log r_n-\log r} T_{f, \eta}(r_n)\\
 	&\leq& {1\over \log r_n-\log r}  (\overline{N}_f(r_n, D)- \mathfrak{X}_{\sigma}(r_n) + (\delta+g+1)\log r_n +O(1))\\
 	&\leq & {1\over \log r_n-\log r} (\bar{n}_f(D) \log r_n - \mathfrak{X}_{\sigma}(r_n)+ (\delta+g+1)\log r_n +O(1)).
 	\end{eqnarray*}
By taking $n\rightarrow \infty$ we get 
$$A(r) \leq  \bar{n}_f(D) +\delta+3g.$$
Now let $r\rightarrow +\infty$ and then let $\delta \to 0$, one gets
	$$\int_R f^*\eta \leq \bar{n}_f(D)+3g.$$ 
Observe that when $g\geq 2, n_f(D)\geq 0$ or $0\leq g\leq 1$ with $\bar{n}_f(D)\geq 1$, we have $\bar{n}_f(D)+3g\leq 6(\bar{n}_f(D)+\max\{0,2g-2\})$. Hence, by choosing $\omega={1\over 6}\eta$,  
$$
\int_R f^*\omega \leq \bar{n}_f(D)+\max\{0, 2g-2\},$$
which verifies (\ref{eqn:alghyp}).
It remains to deal with the following two cases, 
\begin{enumerate}
	\item $g=0, \bar{n}_f(D)=0$,
	\item $g=1, \bar{n}_f(D)=0$.
\end{enumerate}
In these two cases, we prove that  $f$ must be a constant so (\ref{eqn:alghyp}) trivially holds. In case (1), $R=\mathbb{P}^1(\mathbb{C})$, so $f$ is a holomorphic map from $\mathbb{P}^1(\mathbb{C})$ to $X\setminus D$, which must be constant because $X\setminus D$ is Brody hyperbolic. In case (2), consider the universal covering $\pi:\mathbb{C}\to R$, let $\tilde f:=f\circ \pi$ be the lifting of $f$. Then $\tilde{f}:\mathbb{C}\to X\setminus D$ is a holomorphic curve. Since $X\setminus D$ is Brody hyperbolic, $\tilde{f}$ is constant, so $f$ must be constant.
\end{proof}

We have the following consequences of Theorem \ref{alg}.

\begin{corollary}
\begin{enumerate}
	\item If  { ${\mathbb P}^n({\mathbb C})\setminus |\mathcal H|$  
is Brody hyperbolic, then $({\mathbb P}^n({\mathbb C}), |\mathcal H|)$ is algebraically hyperbolic.}
	\item { If $A$ is an abelian variety and $D$ is an ample divisor, then $(A, D)$ is algebraically hyperbolic.}
	\item { Let $X$ be a projective manifold of dimension $n\ge 2$ and let $A$ be a very ample line bundle over $X$. If $D \in |A^m|$ is a general smooth hypersurface with
$$
m\ge (n + 2)^{n+3}(n + 1)^{n+3},
$$
then $(X,D)$ is algebraically hyperbolic. }
\end{enumerate}
\end{corollary}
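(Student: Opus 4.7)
The plan is to deduce each of the three statements directly by combining Theorem \ref{alg} (Nevanlinna pair implies algebraically hyperbolic) with the corresponding Nevanlinna pair result established earlier in the paper. The work is bookkeeping: each case amounts to citing the appropriate theorem that identifies the pair in question as a Nevanlinna pair, and then invoking Theorem \ref{alg}.

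For part (1), I would argue as follows. Given that ${\mathbb P}^n({\mathbb C})\setminus |\mathcal H|$ is Brody hyperbolic, Theorem \ref{hyper1} yields immediately that $({\mathbb P}^n({\mathbb C}), |\mathcal H|)$ is a Nevanlinna pair (this is the nontrivial direction of the equivalence and relies on Lemmas \ref{lem:hyp-minimalrelation} and \ref{lem:nondeg} together with the non-degeneracy characterization of Brody hyperbolicity for hyperplane complements from \cite{ru95}). Applying Theorem \ref{alg} then gives the algebraic hyperbolicity of $({\mathbb P}^n({\mathbb C}), |\mathcal H|)$.

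For part (2), the argument is even more direct: Theorem \ref{abelian} asserts that for an abelian variety $A$ and an ample divisor $D$, the pair $(A,D)$ is a Nevanlinna pair. Combining with Theorem \ref{alg} gives the conclusion. For part (3), Theorem \ref{thm1} tells us that under the stated hypotheses on $X$, $A$, and $m$, for a general smooth hypersurface $D \in |A^m|$ the pair $(X,D)$ is a Nevanlinna pair; Theorem \ref{alg} again closes the argument.

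There is no real obstacle here since all the required Nevanlinna pair statements have been proved in the preceding sections. The only minor point worth flagging is that one should confirm that the positive $(1,1)$-form $\omega$ witnessing algebraic hyperbolicity can be chosen uniformly (i.e.\ independent of the compact Riemann surface $R$ and the map $f:R\to X$), but this is automatic from the proof of Theorem \ref{alg}, where $\omega$ is obtained (after rescaling by $\tfrac{1}{6}$) from the form $\eta$ on $X$ provided by the definition of Nevanlinna pair, a form that depends only on $(X,D)$. Thus the corollary follows by direct application of Theorem \ref{alg} in each of the three cases.
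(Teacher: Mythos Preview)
Your proposal is correct and follows exactly the same approach as the paper: each part combines the relevant Nevanlinna pair result (Theorem \ref{hyper1}, Theorem \ref{abelian}, Theorem \ref{thm1}) with Theorem \ref{alg}. The extra remark about uniformity of $\omega$ is accurate and harmless, though the paper does not bother to mention it.
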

\begin{proof}
(1) By combining Theorem \ref{hyper1} with Theorem \ref{alg}. 

(2) By combining Theorem \ref{abelian} with Theorem \ref{alg}. 

(3) By combining Theorem \ref{thm1} with Theorem \ref{alg}. 
\end{proof}

\bigskip
\noindent{\bf Acknowledgment.}
	We thank Professors Nessim Sibony and Xiaojun Huang for many helpful discussions.


\end{document}